			\theoremstyle{plain}
			\newtheorem{theorem}{Theorem}
			\newtheorem{lemma}{Lemma}
			\newtheorem{proposition}{Proposition}
			\theoremstyle{definition}
			\newtheorem{definition}{Definition}
			\numberwithin{theorem}{section}
			\numberwithin{lemma}{section}
			\numberwithin{proposition}{section}
			\numberwithin{definition}{section}
			\numberwithin{equation}{section}
			\let\withqed\qed
			\let\qed\relax
			\DeclarePairedDelimiter{\brax}{(}{)}
			\DeclarePairedDelimiter{\sqbrax}{[}{]}
			\DeclarePairedDelimiter{\setbrax}{\{}{\}}
			\DeclarePairedDelimiter{\set}{\{}{\}}
			\DeclarePairedDelimiter{\abs}{\lvert}{\rvert}
			\DeclarePairedDelimiter{\normDoubleBar}{\lVert}{\rVert}
			\DeclarePairedDelimiter{\ceil}{\lceil}{\rceil}
			\DeclarePairedDelimiter{\floor}{\lfloor}{\rfloor}
			\newcommand{\open}[2]{\brax{#1 , #2}}
			\newcommand{\clsd}[2]{\sqbrax{#1 , #2}}
			\newcommand{\openclsd}[2]{(#1 , #2]}
			\newcommand{\clsdopen}[2]{[#1 , #2)}
			\newcommand{\Ocv}[2]{O_{#1}}
			\newcommand{\llcv}[2]{\ll_{#1}}
			\newcommand{\ggcv}[2]{\gg_{#1}}
			\newcommand{\suchthat}{:}
			\renewcommand{\vec}[1]{\bm{#1}}
			\newcommand{\grad}{\vec{\nabla}}
			\newsavebox\myboxA
			\newsavebox\myboxB
			\newlength\mylenA
			\newcommand*\widebar[1]{%
				\sbox{\myboxA}{$\m@th#1$}%
				\setbox\myboxB\null
				\ht\myboxB=1.05\ht\myboxA%
				\dp\myboxB=\dp\myboxA%
				\wd\myboxB=0.9\wd\myboxA
				\sbox\myboxB{$\m@th\overline{\copy\myboxB}$}
				\setlength\mylenA{\the\wd\myboxA}
				\addtolength\mylenA{-\the\wd\myboxB}%
				\ifdim\wd\myboxB<\wd\myboxA%
				\rlap{\hskip 0.5\mylenA\usebox\myboxB}{\usebox\myboxA}%
				\else
				\hskip -0.5\mylenA\rlap{\usebox\myboxA}{\hskip 0.5\mylenA\usebox\myboxB}%
				\fi}
			\newcommand{\tbbQ}[0]{$\mathbb{Q}$\xspace}
			\newcommand{\bbC}[0]{\mathbb{C}}
			\newcommand{\bbN}[0]{\mathbb{N}}
			\newcommand{\bbP}[0]{\mathbb{P}}
			\newcommand{\bbQ}[0]{\mathbb{Q}}
			\newcommand{\bbR}[0]{\mathbb{R}}
			\newcommand{\bbZ}[0]{\mathbb{Z}}
			\newcommand{\calI}[0]{{\mathcal I}}
			\newcommand{\calM}[0]{{\mathcal M}}
			\newcommand{\frakb}[0]{\mathfrak{b}}
			\newcommand{\frakm}[0]{\mathfrak{m}}
			\newcommand{\frakB}[0]{\mathfrak{B}}
			\newcommand{\frakI}[0]{\mathfrak{I}}
			\newcommand{\frakM}[0]{\mathfrak{M}}
			\newcommand{\frakS}[0]{\mathfrak{S}}
			\newcommand{\tfrakb}[0]{$\mathfrak{b}$\xspace}
			\newcommand{\tfrakB}[0]{$\mathfrak{B}$\xspace}
			\newcommand{\cancellation}{{\mathscr C}}
			\newcommand{\rank}{\operatorname{rank}}
			\newcommand{\sing}{\operatorname{Sing}}
			\newcommand{\meas}[1]{\lambda\set{#1}}
			\newcommand{\Meas}[1]{\lambda\set[\big]{#1}}
			\newcommand{\supnorm}[1]{\normDoubleBar{#1}_\infty}
			\newcommand{\supnormbig}[1]{\normDoubleBar[\big]{#1}_\infty}
			\newcommand{\tee}{t} 
			\newcommand{\inradius}{r_1}
			\newcommand{\outradius}{r_2}
			\newcommand{\localarcs}[2]{\set{\tfrac{\vec{a}}{q}\in \bbQ^R\cap\clsdopen{0}{1}^R\suchthat q \leq #1, \abs{\Sloc{q}{\vec{a}}}\geq #2 }}
			\newcommand{\weightbox}{\mathscr{B}}
			\newcommand{\expSumSBoxBigAt}[1]{S\brax[\big]{#1 ; P}}
			\newcommand{\expSumSBoxAt}[1]{S\brax{#1 ; P}}
			\newcommand{\expSumSBox}{\expSumSBoxAt{\vec{\alpha}}}
			\newcommand{\majorarcs}{\frakM_{P,d,\Delta}}
			\newcommand{\tmajorarcs}{$\majorarcs$\xspace}
			\newcommand{\minorarcs}{\frakm_{P,d,\Delta}}
			\newcommand{\singSeries}{\frakS}
			\newcommand{\singSeriesIncomplete}[1]{\frakS(#1)}
			\newcommand{\singIntegralBox}{\frakI}
			\newcommand{\singIntegralBoxIncomplete}[1]{\frakI(#1)}
			\newcommand{\Sloc}[2]{S_{#1}(#2)}
			\newcommand{\gradFMultilinear}[2]{\vec{m}^{(#1 \cdot \vec{f})} \brax{ #2 }}
			\newcommand{\gradSomethingMultilinear}[2]{\vec{m}^{(#1)} \brax{ #2 }}
			\newcommand{\vecsuper}[2]{\vec{#1}^{(#2)}}
			\newcommand{\numSolnsInBox}{N_{f_1,\dotsc,f_R}}
			\newcommand{\numZeroesInBoxOf}[1]{N_{#1}}
			\newcommand{\weylDiffIneqNumSolns}{U_{\vec{\beta}\cdot\vec{f}} }
			\newcommand{\weylDiffIneqOfSomethingNumSolns}[1]{U_{#1}}
			\newcommand{\auxIneqOfSomethingNumSolns}[1]{N^{\operatorname{aux}}_{#1}}
			\newcommand{\auxIneqNumSolns}{N^{\operatorname{aux}}_{\vec{\beta}\cdot\vec{f}}}
			\newcommand{\betaDotCapitalF}{\vec{\beta}\cdot\vec{F}}
			\newcommand{\betaDotLeadingPart}{\vec{\beta}\cdot\vec{f}^{[d]}}
			\newcommand{\aDotLeadingPart}{\vec{a}\cdot\vec{f}^{[d]}}
			\newcommand{\aDotCapitalF}{\vec{a}\cdot\vec{F}}
			\newcommand{\aOverQDotLittleF}{\tfrac{\vec{a}}{q}\cdot\vec{f}}
			\newcommand{\alphaDotLittleF}{\vec{\alpha}\cdot\vec{f}}
			\newcommand{\alphaDotLeadingPart}{\vec{\alpha}\cdot\vec{f}^{[d]}}
			\newcommand{\gammaDotLeadingPart}{\vec{\gamma}\cdot\vec{f}^{[d]}}
\begin{document}

\title{Quadratic forms and systems of forms in many variables\thanks{This research was supported by Engineering and Physical Sciences Research Council grants EP/J500495/1 and EP/M507970/1.}
}
\author{\href{https://maths.fan}{Simon L. Rydin Myerson}
}

\maketitle

\begin{abstract}
Let $F_1,\dotsc,F_R$ be quadratic forms with integer coefficients in $n$ variables. When  $n\geq 9R$ and the variety $V(F_1,\dotsc,F_R)$ is a smooth complete intersection, we prove an asymptotic formula for the number of integer points in an expanding box at which these forms simultaneously vanish, which in particular implies the Hasse principle for $V(F_1,\dotsc,F_R)$. Previous work in this direction required $n$ to grow at least quadratically with $R$. We give a similar result for $R$ forms of degree $d$, conditional on an upper bound for the number of solutions to an auxiliary inequality. In principle this result may apply as soon as $n> d2^dR$. In the case that $d\geq 3$, several strategies are available to prove the necessary upper bound for the auxiliary inequality. In a forthcoming paper we use these ideas to apply the circle method to nonsingular systems of forms with real coefficients.

\paragraph{Keywords}forms in many variables $\cdot$ Hardy-Littlewood method $\cdot$\\ quadratic forms $\cdot$ rational points

\paragraph{Mathematics Subject Classification (2000)}11D45 $\cdot$ 11P55 $\cdot$ 11D72 $\cdot$ 11G35 $\cdot$ 14G05
\end{abstract}

\section{Introduction}

\subsection{Results}\label{1.sec:main_result}

Our goal is to improve the following classic result of Birch.

\begin{theorem}[Birch~\cite{birchFormsManyVars}]\label{1.thm:birch's_theorem_long}
	Let $d\geq 2$ and let $F_1(\vec{x}),\dotsc,F_R(\vec{x})$ be homogeneous forms of degree $d$, with integer coefficients in $n$ variables $x_1,\dotsc,x_n$. Let $\weightbox$ be a box in $\bbR^n$, contained in the box $\clsd{-1}{1}^R$, and having sides of length at most 1 which are parallel to the coordinate axes. For each $ P\geq 1$, write
	\begin{equation*}
	\numZeroesInBoxOf{F_1,\dotsc,F_R}(P)
	=
	\#
	\set{ \vec{x}\in\bbZ^n \suchthat
		\vec{x}/P\in\weightbox,\,
		F_1(\vec{x})=0, \dotsc,F_R(\vec{x})=0
	}.
	\end{equation*}
	Let  $W$ be the projective variety cut out in $\bbP_\bbQ^{n-1}$ by the condition that the $R\times n$ Jacobian matrix $\brax{\partial F_i(\vec{x})/ \partial x_j}_{ij}$ has rank less than $R$. If
	\begin{equation}\label{1.eqn:birch's_condition_long}
	n-1-\dim W
	> (d-1)2^{d-1}R(R+1),
	\end{equation}
	then for all $P\geq 1$, some  $\singIntegralBox\geq 0$ depending only on the $c_i$ and $\weightbox$, and some $\singSeries\geq 0$ depending only on the $c_i$, we have
	\begin{equation}\label{1.eqn:HL_formula}
	\numZeroesInBoxOf{F_1,\dotsc,F_R}(P)
	=
	\singIntegralBox\singSeries P^{n-dR}
	+
	O\brax{P^{n-dR-\delta}}
	\end{equation}
	where the implicit constant depends only on the forms $F_i$ and $\delta$ is a positive constant depending only on $d$ and $R$. If the variety $V(F_1,\dotsc,F_R)$ cut out in $\bbP_\bbQ^{n-1}$ by the forms $F_i$ has a smooth point over $\bbQ_p$ for each prime $p$ then $\singSeries >0$, and if it has a smooth real point whose homogeneous co-ordinates lie in $\weightbox$ then  $\singIntegralBox>0$.
\end{theorem}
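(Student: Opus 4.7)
The plan is to attack this by the Hardy--Littlewood circle method. By orthogonality,
\begin{equation*}
\numZeroesInBoxOf{F_1,\dotsc,F_R}(P)
\;=\; \int_{\clsdopen{0}{1}^R} \expSumSBox \, d\vec{\alpha},
\qquad
\expSumSBox \;=\; \sum_{\substack{\vec{x}\in\bbZ^n\\ \vec x/P\in\weightbox}} e\brax{\vec\alpha\cdot\vec F(\vec x)}.
\end{equation*}
Applying Dirichlet's theorem on simultaneous Diophantine approximation to each coordinate of $\vec\alpha$, I would partition $\clsdopen{0}{1}^R$ into major arcs $\frakM$, consisting of small boxes centred at rationals $\vec a/q$ with $q$ bounded by a small power of $P$, and minor arcs $\frakm$ being the complement. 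The main term will arise from integrating an asymptotic for $\expSumSBox$ over $\frakM$, so the central obstacle is a power-saving bound $\abs{\expSumSBox}\ll P^{n-dR-\delta}$ for $\vec\alpha\in\frakm$.

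This bound proceeds by Weyl differencing. Applying $d-1$ differencing steps controls $\abs{\expSumSBox}^{2^{d-1}}$ by the number of $(d-1)$-tuples $(\vec h_1,\dotsc,\vec h_{d-1})$ in a box of side $\asymp P$ for which the symmetric multilinear form attached to $\vec\alpha\cdot\vec F$, evaluated at these $\vec h_i$ and linearised in $\vec x$, produces coefficients simultaneously close to integers. By Davenport's shrinking lemma this count is bounded, up to acceptable factors, by the number of integer tuples on which the associated $R\times n$ Jacobian-type matrix has rank less than $R$, and $W$ is precisely the projective variety cut out by this rank-deficiency condition. A matrix-rank counting argument, tracking how the rank strata intersect a box, contributes the factor $R(R+1)$, so the hypothesis $n-1-\dim W > (d-1)2^{d-1}R(R+1)$ is exactly what is needed to force a dichotomy: either $\vec\alpha$ admits a good rational approximation (hence $\vec\alpha\in\frakM$) or else $\abs{\expSumSBox}\ll P^{n-dR-\delta}$ for some $\delta>0$. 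This step is where all the difficulty lies.

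For the major arcs I would write $\vec\alpha = \vec a/q + \vec\beta$ and split $\vec x=q\vec y+\vec z$ over residues mod $q$, yielding the factorisation
\begin{equation*}
\expSumSBox \;=\; q^{-n}\,\Sloc{q}{\vec a}\cdot P^n\int_{\weightbox} e\brax{P^d\,\vec\beta\cdot\vec F(\vec u)}\,d\vec u \;+\; E(\vec\alpha),
\end{equation*}
with $E(\vec\alpha)$ a routine lower-order error estimated by partial summation. Integrating over $\frakM$ and extending the truncated sum and integral to their natural full ranges then produces the leading term $\singIntegralBox\singSeries P^{n-dR}$, where $\singSeries=\sum_{q\geq1}q^{-n}\sum_{\vec a}^{*}\Sloc{q}{\vec a}$ and $\singIntegralBox$ is the associated singular integral; the tails of both converge absolutely by re-using the Weyl-type bound at the rational arguments $\vec a/q$.

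Finally, for the positivity statements I would pass to the Euler product $\singSeries=\prod_p \sigma_p$, available once $\singSeries$ is absolutely convergent. By Hensel lifting each local factor $\sigma_p$ is positive whenever $V(F_1,\dotsc,F_R)$ has a smooth $\bbQ_p$-point, giving $\singSeries>0$. Likewise $\singIntegralBox$ is a real density over $\weightbox$ expressible via Fourier inversion, and is positive whenever a smooth real point of the variety has homogeneous coordinates lying in $\weightbox$.
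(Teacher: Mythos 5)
This theorem is quoted from Birch's paper and is not proved in the present text, so there is nothing internal to compare against; your sketch is a correct outline of Birch's original circle-method argument (Weyl differencing, the shrinking lemma, the dichotomy between a good rational approximation and a power-saving bound, the standard major-arc factorisation, and positivity via Hensel lifting and a real density computation). The one loose point is the bookkeeping of the factor $R(R+1)$, which in Birch's argument arises from the measure of the set of $\vec\alpha$ admitting a rational approximation of denominator $q$ together with the iteration over the differencing parameter $\theta$, rather than from the rank-stratification count, which instead supplies the quantity $n-1-\dim W$ on the left of \eqref{1.eqn:birch's_condition_long}; the overall architecture is nonetheless right.
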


Here $\singIntegralBox,$ $\singSeries$ are the usual singular integral and series; see \eqref{1.eqn:evaluating_frakI} and \eqref{1.eqn:evaluating_frakS} below. 

We focus in particular on weakening the hypothesis \eqref{1.eqn:birch's_condition_long} on the number of variables, when the number of forms $R$ is greater than one.  Previous improvements of this type have required $R=1$ or 2. Our first result, proved in \S\ref*{1.sec:main_thm_proof}, is as follows:

\begin{theorem}\label{1.thm:main_thm_short}
	When $d=2$ and $\dim V(F_1,\dotsc,F_R)=n-1-R$, we may replace \eqref{1.eqn:birch's_condition_long} with the condition
	\begin{equation}
	n-\sigma_\bbR
	>
	8R,
	\label{1.eqn:condition_on_n_short}
	\end{equation}
	where $\sigma_\bbR$ is the element of $\set{0,\dotsc,n}$ defined by		\begin{equation}\label{1.eqn:def_of_sigma-sub-R}
	\sigma_\bbR=
	1+\max_{\vec{\beta}\in\bbR^R\setminus\set{\vec{0}}} \dim\sing V(\betaDotCapitalF),
	\end{equation}
	and $ V(\betaDotCapitalF)$ is the of the hypersurface  cut out in  $\bbP_\bbR^{n-1}$ by $\beta_1F_1+\dotsc+\beta_RF_R=0$.
\end{theorem}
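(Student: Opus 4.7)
The plan is to apply the Hardy--Littlewood circle method along the lines of Birch, but to exploit the quadratic structure $d=2$ for a sharper minor arc bound. Write
\begin{equation*}
\numZeroesInBoxOf{F_1,\dotsc,F_R}(P) = \int_{\clsdopen{0}{1}^R} \expSumSBox \, d\vec{\alpha},
\end{equation*}
and decompose $\clsdopen{0}{1}^R = \majorarcs \sqcup \minorarcs$ via the standard Farey dissection with a parameter $\Delta$ to be chosen in due course.

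The heart of the argument is a Weyl-type minor arc bound. The key observation is that $\alphaDotLittleF(\vec{x})$ is itself a single real quadratic form in $\vec{x}$; consequently, one application of Weyl differencing reduces $\abs{\expSumSBox}^2$ to a weighted count of integer vectors $\vec{h}$ in a box of side $\ll P$ at which the gradient of $\alphaDotLittleF$ at $\vec{h}$ is near-integral. For $\vec{\alpha}\in\minorarcs$, Dirichlet's theorem supplies a rational approximation $\vec{a}/q$, after which the count is dominated by lattice vectors $\vec{h}$ at which the gradient of the \emph{single} real pencil $\vec{\beta}\cdot\vec{F}$ (with $\vec{\beta}$ close to $\vec{\alpha}$) is near-vanishing. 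Geometrically, such $\vec{h}$ cluster near the affine cone over $\sing V(\vec{\beta}\cdot\vec{F}) \subset \bbP_\bbR^{n-1}$, whose real dimension is at most $\sigma_\bbR - 1$ by \eqref{1.eqn:def_of_sigma-sub-R}. A geometry-of-numbers covering argument then converts this into a Weyl bound which, combined with the Farey dissection, yields
\begin{equation*}
\int_{\minorarcs}\abs{\expSumSBox}\, d\vec{\alpha} \ll P^{n-2R-\delta}
\end{equation*}
under the numerical condition \eqref{1.eqn:condition_on_n_short}.

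The major arc analysis then parallels Birch's: expanding $\expSumSBox$ around each $\vec{a}/q \in \majorarcs$ as a product of the local factor $\Sloc{q}{\vec{a}}$ and a smooth oscillatory integral, and summing over all arcs, produces the main term $\singIntegralBox\singSeries P^{n-2R}$ with a permissible error. The smoothness hypothesis $\dim V(F_1,\dotsc,F_R) = n-1-R$, together with \eqref{1.eqn:condition_on_n_short}, is comfortably sufficient to guarantee absolute convergence of both $\singSeries$ and $\singIntegralBox$, and positivity of each follows from the respective local-solvability assumptions in the standard fashion.

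The main obstacle is the geometric step in the minor arc analysis: passing from the Weyl-differenced sum to a count of $\vec{h}$ governed by the singular locus of a \emph{single} real pencil $\vec{\beta}\cdot\vec{F}$, rather than by the singular locus of the full system $V(F_1,\dotsc,F_R)$. It is precisely this substitution that replaces Birch's quadratic-in-$R$ threshold $(d-1)2^{d-1}R(R+1)$ by the linear threshold $8R$ of \eqref{1.eqn:condition_on_n_short}: $\sigma_\bbR$ is by definition a maximum over pencils, not a property of the system as a whole. The delicate point is to choose the approximating $\vec{\beta}$ so that it depends on $\vec{\alpha}$ in a measurable, controlled way while yielding a singular locus whose dimension is bounded in terms of $\sigma_\bbR$, and then to track this dependence uniformly through the lattice-point count.
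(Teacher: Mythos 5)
There is a genuine gap, and it lies exactly where you locate the ``main obstacle.'' You propose the classical route: a pointwise Weyl bound on $\minorarcs$ obtained from one round of Weyl differencing, followed by Dirichlet approximation of $\vec{\alpha}$ by a rational point $\vec{a}/q$ and a Farey dissection. But the quadratic-in-$R$ threshold $(d-1)2^{d-1}R(R+1)$ in Birch's theorem does \emph{not} come from using the singular locus of the full system rather than of a single pencil; it comes from the Dirichlet step itself. The pointwise dichotomy forces one to approximate all $R$ coordinates of $\vec{\alpha}$ simultaneously, so the admissible denominators run up to $q\leq P^{R(d-1)\theta}$, and the resulting family of arcs has total measure $\approx P^{R(R+1)(d-1)\theta-dR}$ --- one factor of $R$ from the height of $q$ and one from the number of parameters $(a_1,\dotsc,a_R,q)$. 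Dietmann and Schindler already carry out precisely the substitution you describe (keying the lattice-point count to the singular locus of a single pencil, giving the condition $n-\sigma_\bbZ>(d-1)2^{d-1}R(R+1)$), and the $R(R+1)$ survives. So your proposed mechanism cannot by itself produce the linear threshold $8R$; carried out as written, it reproduces the quadratic loss.

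The paper's actual route avoids rational approximation on the minor arcs entirely. The Weyl differencing is applied to the \emph{product} $\expSumSBox\,\overline{\expSumSBoxAt{\vec{\alpha}+\vec{\beta}}}$, which eliminates $\vec{\alpha}$ and leaves a count governed only by the single real pencil $\betaDotCapitalF$ (Lemma~\ref{1.lem:weyl_diff_ineq}); combined with the ellipsoid/eigenvalue bound $\auxIneqOfSomethingNumSolns{\betaDotCapitalF}(B)\ll B^{\sigma_\bbR}$ this yields the two-point ``repulsion principle'' \eqref{1.eqn:assumed_arcs}: if $\abs{\expSumSBox}$ and $\abs{\expSumSBoxAt{\vec{\alpha}+\vec{\beta}}}$ are both large then $\supnorm{\vec{\beta}}$ is either very small or bounded below. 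Lemma~\ref{1.lem:from_moats_to_mean_values} then covers each superlevel set $\set{\vec{\alpha}\suchthat\abs{\expSumSBox}\geq t}$ by well-separated small cubes and bounds its measure by a separation argument rather than by counting rationals; this is what makes the exponent linear in $R$ and gives $\int_{\minorarcs}\abs{\expSumSBox}\,\mathrm{d}\vec{\alpha}\ll P^{n-2R-\delta}$ under $n-\sigma_\bbR>8R$. Your proposal contains no analogue of this bilinear estimate or of the measure-of-large-values argument, and without them the numerology does not close. (A secondary omission: the convergence of $\singSeries$ in the paper is itself extracted from the same repulsion principle applied to the local sums $\Sloc{q}{\vec{a}}$, not from the smoothness hypothesis alone.)
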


Note that \eqref{1.eqn:condition_on_n_short} is equivalent to
\begin{equation}\label{1.eqn:condition_on_rank}
\min_{\vec{\beta}\in\bbR^R\setminus\set{\vec{0}}} \rank (\betaDotCapitalF) > 8R,
\end{equation}
where $\rank (\betaDotCapitalF)$ is the rank of the matrix of the quadratic form $ \beta_1F_1+\dotsc+\beta_RF_R$. The hypothesis \eqref{1.eqn:condition_on_n_short}  is strictly weaker than the case $d=2$ of the condition \eqref{1.eqn:birch's_condition_long} as soon as $R\geq 4$. Indeed we have $\sing V(\betaDotCapitalF)\subset W$ whenever $\vec{\beta}\in\bbR^R\setminus\set{\vec{0}}$, and so
\[
\sigma_\bbR
\leq
1+\dim W.
\]
Thus \eqref{1.eqn:condition_on_n_short}  is  weaker than \eqref{1.eqn:birch's_condition_long} whenever $2R(R+1)<8R$ holds, that is for $R\geq 4$.

To obtain the result described in the abstract we can simplify \eqref{1.eqn:condition_on_n_short} with the following lemma, proved at the end of \S\ref*{1.sec:main_thm_proof}.	
\begin{lemma}\label{1.lem:nonsing_case}
	Let $d\geq 2$ and let $F_1,\dotsc,F_R$ and $W$ be as in Theorem~\ref*{1.thm:birch's_theorem_long}. If $V(F_1,\dotsc,F_R)$ is smooth with dimension $n-1-R$, then we have
	\begin{equation}\label{1.eqn:nonsing_case}
	\sigma_\bbR
	\leq
	1+\dim W
	\leq R-1.
	\end{equation}
\end{lemma}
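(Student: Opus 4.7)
The first inequality $\sigma_\bbR \leq 1 + \dim W$ was already justified in the discussion preceding the lemma, via the inclusions $\sing V(\betaDotCapitalF) \subseteq W$ for every $\vec{\beta} \in \bbR^R \setminus \set{\vec{0}}$ together with the fact that a real algebraic set has real dimension at most the complex dimension of its complexification. The substantive content of the lemma is therefore the second inequality $\dim W \leq R - 2$.

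The starting point is the Jacobian criterion: smoothness of the complete intersection $V = V(F_1, \dotsc, F_R)$ of the expected dimension $n - 1 - R$ forces the $R \times n$ matrix $\brax{\partial F_i/\partial x_j}_{ij}$ to have rank $R$ at every point of $V$, hence $W \cap V = \emptyset$ in $\bbP^{n-1}_\bbC$. By way of warm-up, each individual singular locus $S_{\vec{\beta}} = \sing V(\betaDotCapitalF)$ can be bounded cleanly: after choosing a basis $G_1 = \betaDotCapitalF, G_2, \dotsc, G_R$ of $\langle F_1, \dotsc, F_R \rangle$, the inclusion $S_{\vec{\beta}} \subseteq V(G_1)$ together with $S_{\vec{\beta}} \cap V \subseteq W \cap V = \emptyset$ implies $S_{\vec{\beta}} \cap V(G_2, \dotsc, G_R) = \emptyset$; the projective dimension theorem in $\bbP^{n-1}$ then yields $\dim S_{\vec{\beta}} + (n - R) \leq n - 2$, i.e.\ $\dim S_{\vec{\beta}} \leq R - 2$. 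This alone already proves $\sigma_\bbR \leq R - 1$ directly.

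To obtain the stronger bound $\dim W \leq R - 2$ on the entire variety $W$, I would pass to the incidence variety
\[
Z = \set[\big]{(\vec{x}, [\vec{\beta}]) \in \bbP^{n-1} \times \bbP^{R-1} \suchthat \textstyle \sum_i \beta_i \nabla F_i(\vec{x}) = \vec{0}},
\]
which surjects onto $W$ under the first projection and sits naturally inside the universal hypersurface $\mathcal{X} = V(\vec{\beta} \cdot \vec{F}(\vec{x})) \subseteq \bbP^{R-1} \times \bbP^{n-1}$. A direct computation of $\sing \mathcal{X}$ via the bigraded Jacobian criterion, combined with Euler's identity, identifies it with $\set{(\vec{\beta}, \vec{x}) \suchthat \vec{x} \in V \cap W}$, which is empty by the smoothness of $V$. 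Hence $\mathcal{X}$ is smooth of dimension $n + R - 3$, and $Z$ is cut out in $\mathcal{X}$ by the $n$ equations $\sum_i \beta_i \,\partial F_i / \partial x_j = 0$, which by Euler's identity satisfy exactly one linear relation on $\mathcal{X}$. The expected dimension of $Z$ is therefore $(n + R - 3) - (n - 1) = R - 2$, giving $\dim W \leq \dim Z = R - 2$.

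\textbf{Main obstacle.} The delicate step is verifying that $Z$ genuinely attains its expected dimension $R - 2$, rather than picking up excess components along deeper rank-drop loci; equivalently, that the $n - 1$ independent defining equations of $Z$ on the smooth variety $\mathcal{X}$ form a regular sequence. The smoothness of $\mathcal{X}$, equivalent to $V \cap W = \emptyset$, is precisely the input that controls this.
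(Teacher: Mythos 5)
Your treatment of the two ends of the chain is fine and matches the paper: the inclusion $\sing V(\betaDotCapitalF)\subset W$ gives $\sigma_\bbR\leq 1+\dim W$, and your ``warm-up'' --- intersecting $\sing V(\betaDotCapitalF)$ with $V(F_1,\dotsc,F_{R-1})$, noting that the intersection lands in $\sing V(F_1,\dotsc,F_R)=\emptyset$, and invoking the projective dimension theorem --- is precisely the argument the paper gives to obtain $\dim\sing V(\betaDotCapitalF)\leq R-2$ and hence $\sigma_\bbR\leq R-1$.

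The gap is in your proof of $\dim W\leq R-2$. You compute the \emph{expected} dimension of the incidence variety $Z$ by subtracting the number of independent defining equations from $\dim\mathcal{X}$ and then assert $\dim Z=R-2$. But cutting a variety by hypersurfaces only bounds the dimension of each nonempty component from \emph{below}; it never supplies an upper bound, and smoothness of the ambient $\mathcal{X}$ does not force the $n-1$ independent equations to form a regular sequence (one can cut a smooth surface by two proportional linear forms and lose only one dimension). You flag this yourself as the ``main obstacle,'' but nothing in your set-up resolves it: the hypothesis $V\cap W=\emptyset$ gives smoothness of $\mathcal{X}$ but says nothing about excess components of the critical locus $Z$ lying over deeper rank-drop strata of the Jacobian, where the fibres of $Z\to W$ are positive-dimensional. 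As written you have only shown $\dim Z\geq R-2$ whenever $Z\neq\emptyset$, which is the wrong direction. The paper sidesteps this entirely by quoting the bound $B_2=1+\dim W\leq R-1$ for nonsingular systems from Browning and Heath-Brown, after observing that passing to their ``equivalent optimal system'' is an invertible linear change of the $F_i$ and so preserves $V(F_1,\dotsc,F_R)$ and $W$. To complete your argument you must either genuinely control the rank stratification (not just the top stratum) to bound $\dim Z$ from above, or cite the Browning--Heath-Brown result as the paper does. Note, though, that the application (Theorem~\ref{1.thm:main_thm_short} with $n\geq 9R$) only uses the outer bound $\sigma_\bbR\leq R-1$, and that part of your argument is complete.
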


If $V(F_1,\dotsc,F_R)$ is a smooth complete intersection and $n\geq 9R$ then Theorem~\ref*{1.thm:main_thm_short} and Lemma~\ref*{1.lem:nonsing_case} imply that the asymptotic formula \eqref{1.eqn:HL_formula} holds. This in turn implies that $V(F_1,\dotsc,F_R)$ satisfies the Hasse principle, by the last part of Theorem~\ref*{1.thm:birch's_theorem_long}. As is usual with the circle method one also obtains weak approximation for $V(F_1,\dotsc,F_R)$ in this case; see the comments after the proof of Theorem~\ref*{1.thm:main_thm_short} in \S\ref*{1.sec:main_thm_proof}.

The ``square-root cancellation" heuristic discussed around formula (1.12) in Browning and Heath-Brown~\cite{browningHeathBrownDiffDegrees} suggests that the condition $n > 4R$ should suffice in place of the $n\geq 9R$ in the previous paragraph. So \eqref{1.eqn:condition_on_n_short} brings us within a constant factor of square-root cancellation as $R$ grows, while \eqref{1.eqn:birch's_condition_long} misses by a factor of $O(R)$.


We deduce  Theorem~\ref*{1.thm:main_thm_short} from the following more general result, proved in \S\ref*{1.sec:main_thm_proof}.

\begin{definition}\label{1.def:aux_ineq}
	For each $k \in\bbN\setminus\set{\vec{0}}$ and $\vec{t}\in\bbR^k$ we write  $\supnorm{\vec{t}} = \max_i \abs{t_i}$ for the supremum norm. Let $f(\vec{x})$ be any polynomial of degree $d\geq 2$ with real coefficients in $n$ variables $x_1,\dotsc,x_n$.  For $i= 1,\dotsc, n$ we define
	\begin{equation*}
	m^{( f )}_i \brax{\vec{x}^{(1)},\dotsc,\vec{x}^{(d-1)} }
	=
	\sum_{j_1,\dotsc,j_{d-1}=1}^n
	x^{(1)}_{j_1} \dotsm x^{(d-1)}_{j_{d-1}}
	\frac{\partial^{d} f(\vec{x})}{\partial x_{j_1} \dotsm \partial x_{j_{d-1}} \partial x_i},
	\end{equation*}
	where we write $\vecsuper{x}{j}$  for a vector of $n$ variables $(x^{(j)}_1,\dotsc,x^{(j)}_n)^T$. This defines an $n$-tuple of multilinear forms
	\[
	\gradSomethingMultilinear{ f } {\vec{x}^{(1)},\dotsc,\vec{x}^{(d-1)} }\in \bbR[\vec{x}^{(1)},\dotsc,\vec{x}^{(d-1)}]^n.
	\]
	Finally, for each  $B \geq 1$  we put $\auxIneqOfSomethingNumSolns{f} \brax{ B }$ for the number of $(d-1)$-tuples of integer $n$-vectors $\vec{x}^{(1)}, \dotsc, \vec{x}^{(d-1)}$ with
	\begin{gather}
	\supnorm{\vecsuper{x}{1}},\dotsc,\supnorm{\vecsuper{x}{d-1}} \leq B, 
	\nonumber
	\\
	\label{1.eqn:aux_ineq}
	\supnorm{\gradSomethingMultilinear{ f }{ \vec{x}^{(1)}, \dotsc, \vec{x}^{(d-1)} }} < \supnorm{ f^{[d]} } B^{d-2}
	\end{gather}
	where we let $\supnorm{f^{[d]}} = \frac{1}{d!} \max_{\vec{j}\in\set{1,\dotsc,n}^d} \abs[\big]{\frac{\partial^d f(\vec{x})}{\partial x_{j_1}\dotsm\partial x_{j_d}}}$.
\end{definition}

\begin{theorem}\label{1.thm:manin}
	Let the forms $F_i$ and the counting function $	\numZeroesInBoxOf{F_1,\dotsc,F_R}(P)$ be as in Theorem~\ref*{1.thm:birch's_theorem_long}, and let $\auxIneqOfSomethingNumSolns{f}(B)$ be as in Definition~\ref*{1.def:aux_ineq}. Suppose that the $F_i$ are linearly independent and that
	\begin{equation}\label{1.eqn:aux_ineq_bound_in_manin_thm}
	\auxIneqOfSomethingNumSolns{\betaDotCapitalF}(B)
	\leq
	C_0 B^{(d-1)n-2^d\cancellation}
	\end{equation}
	for some $C_0 \geq 1$, $\cancellation > dR$ and all $\vec{\beta}\in\bbR^R$ and $B\geq 1$, where we have written $\betaDotCapitalF$ for $ \beta_1F_1+\dotsb+\beta_RF_R$. Then for all $P\geq 1$ we have
	\begin{equation*}
	\numZeroesInBoxOf{F_1,\dotsc,F_R}(P)
	=
	\singIntegralBox\singSeries P^{n-dR}
	+
	O\brax{P^{n-dR-\delta}},
	\end{equation*}
	where the implicit constant depends at most on $C_0$, $\cancellation$ and the $F_i$, and $\delta$ is a positive constant depending at most on $\cancellation$, $d$ and $R$. Here  $\singIntegralBox$ and $\singSeries $ are as in Theorem~\ref*{1.thm:birch's_theorem_long}.
\end{theorem}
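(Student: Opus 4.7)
The plan is to prove Theorem~\ref{1.thm:manin} via the Hardy--Littlewood circle method, following the broad framework of Birch~\cite{birchFormsManyVars} but replacing his Weyl-type minor-arc estimate with one that exploits the hypothesis~\eqref{1.eqn:aux_ineq_bound_in_manin_thm} directly. By orthogonality,
\begin{equation*}
\numZeroesInBoxOf{F_1,\dotsc,F_R}(P)
=
\int_{\clsdopen{0}{1}^R} \expSumSBox \, d\vec{\alpha},
\qquad
\expSumSBox
=
\sum_{\vec{x}\in\bbZ^n,\ \vec{x}/P\in\weightbox} e\brax{\vec{\alpha}\cdot\vec{F}(\vec{x})},
\end{equation*}
and I would split the torus into major arcs $\majorarcs$ (a union of boxes of radius $P^{\Delta-d}$ centred at rationals $\vec{a}/q$ with $q\leq P^\Delta$) and their complement $\minorarcs$, where $\Delta>0$ is a small parameter to be chosen in terms of $\cancellation-dR$.

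The heart of the argument is the minor-arc bound. After $d-1$ rounds of Weyl differencing and Cauchy--Schwarz I would obtain an inequality of the standard shape
\begin{equation*}
\abs{\expSumSBox}^{2^{d-1}}
\ll
P^{(2^{d-1}-d+1)n}
\sum_{\supnorm{\vecsuper{x}{j}}\leq P}
\prod_{i=1}^{n} \min\brax[\big]{P,\ \normDoubleBar{m^{(\alphaDotLittleF)}_i\brax{\vecsuper{x}{1},\dotsc,\vecsuper{x}{d-1}}}^{-1} },
\end{equation*}
where the multilinear forms on the right are exactly those of Definition~\ref{1.def:aux_ineq}. A Davenport-style shrinking and pigeonhole argument, followed by one further squaring, then converts the right-hand side into a clean count of integer tuples, yielding a bound of the form $\abs{\expSumSBox}^{2^d}\ll P^{(2^d-d)n+\epsilon}\auxIneqOfSomethingNumSolns{\alphaDotLittleF}(P)$; applying the hypothesis~\eqref{1.eqn:aux_ineq_bound_in_manin_thm} with $\vec{\beta}=\vec{\alpha}$ then gives a uniform bound $\abs{\expSumSBox}\ll P^{n-\cancellation+\epsilon}$ valid on all of $\clsdopen{0}{1}^R$. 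Since $\minorarcs$ has measure $O(1)$, integration produces a minor-arc contribution $O(P^{n-dR-\delta})$ as soon as $\cancellation>dR$ and $\Delta$ is small enough.

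The major-arc contribution is then evaluated in the classical fashion: on each arc about $\vec{a}/q$ one approximates $\expSumSBoxAt{\vec{a}/q+\vec{\beta}}$ by the product $q^{-n}\Sloc{q}{\vec{a}}J(\vec{\beta})$ of a complete local sum and an archimedean oscillatory integral, sums and integrates, and verifies that the truncated singular series $\singSeriesIncomplete{Q}$ and singular integral $\singIntegralBoxIncomplete{T}$ converge to $\singSeries$ and $\singIntegralBox$ at polynomial rates. Both convergence statements follow by applying the same Weyl-plus-auxiliary-inequality bound from the previous paragraph to the complete local sums $\Sloc{q}{\vec{a}}$ and to the archimedean oscillatory integrals respectively, and in each case the assumption $\cancellation>dR$ is exactly what ensures absolute convergence. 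The chief obstacle is the minor-arc step above: one must pass from the Weyl-differenced mixed-norm sum to a clean count governed by $\auxIneqOfSomethingNumSolns{\alphaDotLittleF}$ without losing more than $O(P^\epsilon)$, so that the full $2^d\cancellation$ savings in the exponent of~\eqref{1.eqn:aux_ineq_bound_in_manin_thm} translates cleanly into a power-of-$P$ saving for $\abs{\expSumSBox}$; it is also this step which fixes the normalization $\supnorm{f^{[d]}} B^{d-2}$ in Definition~\ref{1.def:aux_ineq}.
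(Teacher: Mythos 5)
There is a genuine gap at the heart of your minor-arc argument. You claim that Weyl differencing plus the hypothesis \eqref{1.eqn:aux_ineq_bound_in_manin_thm}, applied with $\vec{\beta}=\vec{\alpha}$, yields a \emph{pointwise} bound $\abs{\expSumSBox}\ll P^{n-\cancellation+\epsilon}$ valid on all of $\clsdopen{0}{1}^R$. This is false as stated ($\expSumSBoxAt{\vec{0}}\asymp P^n$, and $S(\vec{a}/q;P)$ is of comparable size at rationals of small denominator), and it cannot be repaired simply by restricting to the minor arcs. The obstruction is that Weyl differencing produces the condition that $\gradSomethingMultilinear{\alphaDotLittleF}{\vec{x}^{(1)},\dotsc,\vec{x}^{(d-1)}}$ lie close to an arbitrary \emph{integer} vector (Definition~\ref{1.def:weyl_diff_ineq}), whereas \eqref{1.eqn:aux_ineq_bound_in_manin_thm} only counts tuples where this multilinear form is small in the archimedean sense, below the threshold $\supnorm{f^{[d]}}B^{d-2}$. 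The tuples for which the form is near a \emph{nonzero} integer vector are invisible to the auxiliary hypothesis, and handling them is precisely the step that forces Birch into the shrinking-lemma and rational-approximation argument costing the factor $R(R+1)$. Moreover the hypothesis is purely archimedean and scale-invariant in $\vec{\beta}$: it knows nothing about the arithmetic of $\vec{\alpha}$ modulo $1$, so it cannot by itself distinguish minor-arc from major-arc points, and no pointwise saving of size $P^{\cancellation}$ on the minor arcs can follow from it alone.

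The paper resolves this differently. One differences the product $\expSumSBoxAt{\vec{\alpha}+\vec{\beta}}\,\widebar{\expSumSBox}$ so as to eliminate $\vec{\alpha}$ entirely; the leading form after differencing is $\betaDotLeadingPart$, where $\vec{\beta}$ is the \emph{difference} of the two frequencies (Lemma~\ref{1.lem:weyl_diff_ineq}). Tuples with $\gradFMultilinear{\vec{\beta}}{\vec{x}^{(1)},\dotsc,\vec{x}^{(d-1)}}$ near a nonzero integer vector then force $\supnorm{\vec{\beta}}\gg P^{-(d-1)\theta}$, while tuples near $\vec{0}$ that escape the count $\auxIneqNumSolns$ force $\supnorm{\vec{\beta}}\ll P^{\theta-d}$; this dichotomy yields the two-point repulsion inequality \eqref{1.eqn:assumed_arcs} (Proposition~\ref{1.prop:moat_lemma}). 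That inequality is converted not into a pointwise bound but into a \emph{measure} bound on the superlevel sets of $\abs{\expSumSBox}$ (Lemmas~\ref{1.lem:from_moats_to_mean_values} and~\ref{1.lem:mean_value_from_power-law_moats}), which is then combined with the separate, much weaker pointwise minor-arc bound of Dietmann and Schindler (Lemma~\ref{1.lem:weyl's_ineq}, where the integrality and linear independence of the $F_i$ finally enter) to control $\int_{\minorarcs}\expSumSBox\,\mathrm{d}\vec{\alpha}$. An analogous counting, rather than pointwise, argument is needed for the convergence of the singular series in Lemma~\ref{1.lem:frakS}. Your sketch omits all of this machinery, and without it the proof does not go through.
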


One trivially has \[
B^{(d-2)n}
\ll_{d,n}
\auxIneqOfSomethingNumSolns{\betaDotCapitalF}(B)\ll_{d,n}B^{(d-1)n}.
\]
So \eqref{1.eqn:aux_ineq_bound_in_manin_thm} requires us to save a factor of $P^{2^d\cancellation}$ over the trivial upper bound, while the largest saving possible is of size $O(P^n)$. It follows that we  must have $n>d2^{d}R$ in order for both \eqref{1.eqn:aux_ineq_bound_in_manin_thm} and $\cancellation>dR$ to hold.

Counting functions similar to $\auxIneqOfSomethingNumSolns{\betaDotCapitalF}(B)$ play a similar role in some other applications of the circle method, with the equations
\begin{equation}\label{1.eqn:aux_eq}
{\gradSomethingMultilinear{ f }{ \vec{x}^{(1)}, \dotsc, \vec{x}^{(d-1)} }}
= \vec{0}
\end{equation}
in place of the inequality \eqref{1.eqn:aux_ineq}. The quantities $M(a_1,\dotsc,a_r;H)$ from formula (9) of Dietmann~\cite{dietmannWeylsIneq}, and $\calM_f(P)$ from Lemma~2 of Schindler~\cite{schindlerWeylsIneq} are both of this type. In this setting one needs to save a factor of size $B^{O(R^2)}$ over the trivial bound.

In forthcoming work we bound the function $\auxIneqOfSomethingNumSolns{\betaDotCapitalF}(B)$ for degrees higher than 2, with the goal of handling systems $F_i$ in roughly $d2^{d}R$ variables. We will approach this problem variously by using elementary methods, by generalising the argument used in Lemma~3 of Davenport~\cite{davenportSixteen} to treat the equations \eqref{1.eqn:aux_eq}, and by applying the circle method iteratively to the inequalities \eqref{1.eqn:aux_ineq}. We will also combine the ideas used here with the variant of the circle method due to Freeman \cite{freemanAsymptBoundsAndFormulas} to give a version of Theorem~\ref*{1.thm:manin} for systems of forms $F_i$ with real coefficients.

\subsection{Related work}\label{1.sec:related_work}

Theorem~1 of M\"uller~\cite{mullerSystemsQuadIneqsAndValueDistns} gives a result with exactly the same number of variables as Theorem~\ref*{1.thm:main_thm_short}, but for quadratic inequalities with real coefficients rather than quadratic equations with rational coefficients. It is in turn founded on work of Bentkus and G\"otze~\cite{bentkusGotzeEllipsoids,bentkusGotzeDistributionQuadForms} concerning a single quadratic inequality. The method of proof is related to ours, see \S\S\ref*{1.sec:moat_lemmas} and~\ref*{1.sec:weyl_diff} below.


When $d=2$, the forms $F_i$ are diagonal and the variety $V(F_1,\dotsc,F_R)$ is smooth, then the conclusions of Theorem~\ref*{1.thm:birch's_theorem_long} hold whenever $n > 4R$. That is, we have the ``square-root cancellation" situation described at the end of \S\ref*{1.sec:main_result}. This follows by standard methods from a variant of Hua's lemma due to Cook~\cite{cookNoteHuasLemma}.

When $d=2$ Dietmann~\cite{dietmannSystemsQuadForms}, improving work of Schmidt~\cite{schmidtSystemsQuadForms}, gives conditions similar to \eqref{1.eqn:condition_on_n_short} under which the asymptotic formula \eqref{1.eqn:HL_formula} holds and the constant $\singSeries$ is positive. In particular it is sufficient that either $
\min_{\vec{\beta}\in\bbC^R\setminus\set{\vec{0}}} \rank(\betaDotCapitalF)>2R^2+3R,
$ or that $\min_{\vec{a}\in\bbQ^R\setminus\set{\vec{0}}} \rank(\aDotCapitalF)
>2R^3 +\tau(R)R$, where $\tau(R) =2$ if $R$ is odd and 0 otherwise. He also shows that if $d=2$, the variety $V(F_1,\dotsc,F_R)$ has a smooth real point and
$
\min_{\vec{a}\in\bbQ^R\setminus\set{\vec{0}}} \rank(\aDotCapitalF)
>2R^3-2R 
$
then $V(F_1,\dotsc,F_R)$ has a rational point.

Munshi~\cite{munshiPairsQuadrics11Vars} proves the asymptotic formula \eqref{1.eqn:HL_formula} when $d=2$, $n=11$ and $V(F_1,F_2)$ is smooth. By contrast using Theorem~\ref*{1.thm:birch's_theorem_long}  and \eqref{1.eqn:nonsing_case} would require $n \geq 14$. When $d=2$ and $R=1$ we have a single quadratic form $F$. Heath-Brown~\cite{heathBrownNewForm} then proves such an asymptotic formula  whenever $V(F)$ is smooth and $n \geq 3$.

If $F$ is a cubic form, Hooley \cite{hooleyOctonaryCubicsII} shows that when $n=8$, the variety $V(F)$ is smooth, and $\weightbox$ is a sufficiently small box centred at a point where the Hessian determinant of $F$ is nonzero, then we have a smoothly weighted  asymptotic formula analogous to  \eqref{1.eqn:HL_formula}. This result is conditional on a Riemann hypothesis for a certain modified Hasse-Weil $L$-function. For $n=9$ he proves a similar result without any such assumption \cite{hooleyNonaryCubicsIII}, with an error term $O(P^{n-3}(\log P)^{-\delta})$ instead of the $O(P^{n-3-\delta})$ in \eqref{1.eqn:HL_formula}. In this setting Theorem~\ref*{1.thm:birch's_theorem_long} requires $n \geq 17$.

In the case of a single quartic form $F$ such that $V(F)$ is smooth, Hanselmann~\cite{hanselmannQuartics40Vars} gives the condition $n\geq 40$ in place of the $n \geq 49$ required to apply Theorem~\ref*{1.thm:birch's_theorem_long}. Work in progress of Marmon and Vishe yields a further improvement.

When $d\geq 5$ and $R=1$, a sharper condition than \eqref{1.eqn:birch's_condition_long} is available by work of Browning and Prendiville~\cite{browningPrendivilleImprovements}. For $d\leq 10$ and a smooth hypersurface $V(F)$ this is essentially a reduction of one quarter in the number of variables required.

Dietmann~\cite{dietmannWeylsIneq} and Schindler~\cite{schindlerWeylsIneq} show that the condition \eqref{1.eqn:birch's_condition_long} may be replaced with $n-\sigma_\bbZ>(d-1)2^{d-1}R(R+1)$, where we define
\begin{equation}\label{1.eqn:def_of_sigma-sub-Z}
\sigma_\bbZ
=
1+ \max_{\vec{a}\in \bbZ^R\setminus\set{ \vec{0} }} \dim\sing V(\aDotLeadingPart),
\end{equation}	
Note that the maximum here is over integer points, and so we may have $\sigma_\bbZ < \sigma_\bbR$.

Birch's work~\cite{birchFormsManyVars} is generalised to systems of forms with differing degrees by Browning and Heath-Brown~\cite{browningHeathBrownDiffDegrees} over \tbbQ and by Frei and Madritsch~\cite{freiMadritschDifferingDegrees} over number fields. It is extended to linear spaces of solutions by Brandes~\cite{brandesFormsRepresentingForms,brandesLinearSpacesNumberFields}. Versions of the result for function fields are due to Lee~\cite{leeFunctionFields} and to Browning and Vishe~\cite{browningVisheFunctionFields}. A version for bihomogeneous forms is due to Schindler~\cite{schindlerBihomogeneous}, and  Mignot~\cite{mignotTridegree111,mignotCircleMethodWithToricHeights} further develops these methods for certain trilinear forms and for hypersurfaces in toric varieties. Liu~\cite{liuQuadricsPrimes} proves existence of solutions in prime numbers to a quadratic equation in 10 or more variables. Asymptotic formulae for systems of equations of the same degree with prime values of the variables are considered by Cook and Magyar~\cite{cookMagyarPrimeVars} and by Xiao and Yamagishi~\cite{xiaoYamagishiPrimeVars}. Magyar and Titchetrakun~\cite{magyarEtAlAlmostPrimes} extend these results to values of the variables with a bounded number of prime factors, while Yamagishi~\cite{yamagishiDifferingDegreesPrimes} treats systems of equations with differing degrees and prime variables. It is natural to ask whether similar generalisations exist for Theorem~\ref*{1.thm:main_thm_short}.

\subsection{Notation}\label{1.sec:notation}

Parts of our work apply to polynomials with general real coefficients. Therefore we let $f_1(\vec{x}),\dotsc,f_R(\vec{x})$ be polynomials with real coefficients, of degree $d \geq 2$ in $n$ variables $x_1,\dotsc,x_n$, and we write $f^{[d]}_1(\vec{x}), \dotsc, f^{[d]}_R(\vec{x})$ for the degree $d$ parts.

Implicit constants in $\ll$ and big-$O$ notation are always permitted to depend on the polynomials $f_i$, and hence on $d,n,$ and $R$. We use scalar product notation to indicate linear combinations, so that for example $\alphaDotLittleF=\sum_{i=1}^R \alpha_i f_i$. Throughout, $\supnorm{\vec{t}}$, $\supnorm{f}$, $\vecsuper{m}{f}$ and $\auxIneqOfSomethingNumSolns{f} \brax{ B }$ are as in Definition~\ref*{1.def:aux_ineq}. We do not require algebraic varieties to be irreducible, and we use the convention that $\dim\emptyset = -1$. 

By an \emph{admissible box} we mean a box in $\bbR^n$  contained in the box $\clsd{-1}{1}^R$, and having sides of length at most 1 which are parallel to the coordinate axes. We let $\weightbox$ be an admissible box. For each $\vec{\alpha}\in\bbR^R$ and $P\geq 1$, we define the exponential sum
\begin{equation}\label{1.eqn:def_of_S}
\expSumSBox
=
\sum_{\substack{ \vec{x} \in \bbZ^n \\ \vec{x}/P \in \weightbox }}
e( \alphaDotLittleF(\vec{x}) )
\end{equation}
where $e(\tee) = e^{2\pi i \tee}$. This depends implicitly on $\weightbox$ and the  $f_i$. We often write the expression $\max\setbrax{P^{-d} \supnorm{\vec{\beta}}^{-1}, \supnorm{\vec{\beta}}^{\frac{1}{d-1}}}$, and if $\vec{\beta}=\vec{0}$ this quantity is defined to be $+\infty$.

\subsection{Structure of this paper}

In \S\ref*{1.sec:circle_method} we apply the circle method to a system of degree $d$ polynomials with integer coefficients, assuming a certain hypothesis \eqref{1.eqn:assumed_arcs} on $\expSumSBox$. In \S\ref*{1.sec:aux_ineq} we prove this hypothesis on ${\expSumSBox}$ for polynomials with real coefficients, assuming that the bound \eqref{1.eqn:aux_ineq_bound_in_manin_thm} above holds. We then prove Theorems~\ref*{1.thm:main_thm_short} and~\ref*{1.thm:manin} in \S\ref*{1.sec:main_thm_proof}.

\section{The circle method}\label{1.sec:circle_method}

In this section we apply the circle method, assuming that the bound
\begin{equation}\label{1.eqn:assumed_arcs}
\min\setbrax*{ \abs*{\frac{\expSumSBox}{P^{n+\epsilon}}} , \,  \abs*{\frac{\expSumSBoxAt{\vec{\alpha}+\vec{\beta}}}{P^{n+\epsilon}}} }
\leq
C
\max \setbrax[\big]{  P^{-d}\supnorm{\vec{\beta}}^{-1}  ,\,   \supnorm{\vec{\beta}}^{\frac{1}{d-1}}  }^{\cancellation}
\end{equation}
holds for all $\vec{\alpha},\vec{\beta}\in \bbR^R$, $P \geq 1$, some $\cancellation>dR$, $C \geq 1$, and some small $\epsilon>0$. In particular we will show that \eqref{1.eqn:assumed_arcs} implies that the set of points $\vec{\alpha}$ in $\bbR^R$ where $\abs{\expSumSBox}$ is large has small measure. Our goal is the  result below, which will be proved in \S\ref*{1.sec:completing_the_circle_method}.

\begin{proposition}\label{1.prop:circle_method}
	Assume that the polynomials $f_i$ have integer coefficients, and that the leading forms $f^{[d]}_i(\vec{x})$ are linearly independent. Write
	\begin{equation}\label{1.eqn:def_of_num_solns_in_box}
	\numSolnsInBox(P)
	=
	\#
	\set{ \vec{x}\in\bbZ^n \suchthat
		\vec{x}/P\in\weightbox,\,
		f_1(\vec{x})=\dotsb= f_R(\vec{x})=\vec{0}
	}.
	\end{equation}
	Suppose we are given $\cancellation>dR$, $C \geq 1$ and $\epsilon>0$ such  that  the bound \eqref{1.eqn:assumed_arcs} holds  for all $\vec{\alpha},\vec{\beta}\in \bbR^R$, all $P \geq 1$ and all admissible boxes $\weightbox$. If  $\epsilon$ is sufficiently small in terms of $\cancellation$, $d$ and $R$, then we have
	\begin{equation*}
	\numSolnsInBox(P)
	=
	\singIntegralBox\singSeries P^{n-dR}
	+
	O_{C,f_1,\dotsc,f_R}\brax{P^{n-dR-\delta}}
	\end{equation*}
	for all $P\geq 1$, all admissible boxes $\weightbox$, and some  $\delta>0$ depending only on $\cancellation$, $d$, $R$. Here $\singIntegralBox,$ $\singSeries$ are the usual singular integral and series given by  \eqref{1.eqn:evaluating_frakI} and \eqref{1.eqn:evaluating_frakS} below.
\end{proposition}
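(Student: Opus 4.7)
The plan is to apply the Hardy--Littlewood circle method, writing
\[
\numSolnsInBox(P) = \int_{[0,1)^R}\expSumSBox\,d\vec{\alpha}
\]
and splitting $[0,1)^R = \majorarcs\cup\minorarcs$, with $\majorarcs$ a union of small boxes around rationals $\vec{a}/q$, $q\leq Q = P^\theta$ for a small $\theta>0$. The hypothesis \eqref{1.eqn:assumed_arcs} will serve both as a Weyl differencing inequality (to give a pointwise bound on $\abs{\expSumSBox}$ on the minor arcs) and as the source of a ``moat'' bound on the Lebesgue measure of level sets of $\abs{\expSumSBox}$. The moat bound is the principal novelty. Setting $\calM(T) = \set{\vec{\alpha}\in[0,1)^R\suchthat\abs{\expSumSBox}\geq T}$ and $v = (T/(CP^{n+\epsilon}))^{1/\cancellation}$, the bound \eqref{1.eqn:assumed_arcs} forbids any two points of $\calM(T)$ from differing by a $\vec{\beta}$ with $P^{-d}/v < \supnorm{\vec{\beta}} < v^{d-1}$ on the torus. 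Hence $\calM(T)$ decomposes into clusters of diameter at most $2P^{-d}/v$, any two of which are separated by at least $v^{d-1}$, and a packing argument gives
\[
\lambda(\calM(T))\ll v^{-Rd}P^{-dR} = P^{-dR}\brax{P^{n+\epsilon}/T}^{Rd/\cancellation}.
\]

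On the major arcs I would apply the standard Birch-type approximation $\expSumSBoxAt{\vec{a}/q+\vec{\eta}}\approx q^{-n}\Sloc{q}{\vec{a}}I(P^d\vec{\eta})$ with $I(\vec{\gamma}) = \int_{\weightbox}e(\gammaDotLeadingPart(\vec{u}))\,d\vec{u}$. Summing over $\vec{a},q$ and integrating in $\vec{\eta}$ gives the truncated singular series and integral; the bounds on $\Sloc{q}{\vec{a}}$ needed for absolute convergence follow from \eqref{1.eqn:assumed_arcs} specialised at rational $\vec{\alpha}$, using linear independence of the $f_i^{[d]}$. Letting $Q\to\infty$ at the appropriate rate yields $\singIntegralBox\singSeries P^{n-dR}$ with an acceptable error. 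For the minor arcs, a layer-cake decomposition together with the moat estimate gives
\[
\int_{\minorarcs}\abs{\expSumSBox}\,d\vec{\alpha}\ll \int_0^{T_1}\lambda(\calM(t))\,dt \ll P^{-dR+(n+\epsilon)a}T_1^{1-a}
\]
where $a = Rd/\cancellation < 1$ (using $\cancellation>dR$) and $T_1 = \sup_{\minorarcs}\abs{\expSumSBox}$. Provided a uniform Weyl bound $T_1\leq P^{n-\tau}$ for some $\tau>0$ and sufficiently small $\epsilon$, this is $O(P^{n-dR-\delta})$ for some $\delta>0$ depending only on $\cancellation,d,R$.

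The main obstacle is the uniform Weyl bound $T_1\leq P^{n-\tau}$ on the minor arcs. For $\vec{\alpha}\in\minorarcs$ the plan is to use Dirichlet's simultaneous approximation theorem to produce $\vec{\beta}$ with $\supnorm{\vec{\beta}}\sim P^{-(d-1)}$ such that $\vec{\alpha}+\vec{\beta}$ also lies far from rationals with denominator $\leq Q$; then $\abs{\expSumSBoxAt{\vec{\alpha}+\vec{\beta}}}$ should be at least as large as $\abs{\expSumSBox}$ (or comparable to it, by an averaging argument using the moat estimate), so \eqref{1.eqn:assumed_arcs} at this $\vec{\beta}$, where both terms in the maximum balance at $P^{-1}$ and the right-hand side has size $P^{n+\epsilon-\cancellation}$, forces $\abs{\expSumSBox}\ll P^{n+\epsilon-\cancellation}$. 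Reconciling the resulting $\tau$ with the choice $Q=P^{\theta}$ defining $\majorarcs$, and tracking how the implicit constants depend on $\weightbox$ and the $f_i$, is the bookkeeping that I expect to carry out in \S\ref{1.sec:completing_the_circle_method}.
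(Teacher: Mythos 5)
Your overall architecture --- major/minor arc decomposition, the ``repulsion'' consequence of \eqref{1.eqn:assumed_arcs} giving the measure bound $\lambda(\calM(T))\ll P^{-dR}(P^{n+\epsilon}/T)^{dR/\cancellation}$, and the layer-cake integration over the minor arcs --- is exactly the paper's strategy (Lemmas~\ref{1.lem:from_moats_to_mean_values} and~\ref{1.lem:mean_value_from_power-law_moats}), and that part of the proposal is sound. The genuine gap is your final paragraph: the uniform bound $T_1=\sup_{\minorarcs}\abs{\expSumSBox}\leq P^{n-\tau}$ \emph{cannot} be extracted from \eqref{1.eqn:assumed_arcs}. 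That hypothesis only bounds the \emph{minimum} of $\abs{\expSumSBox}$ and $\abs{\expSumSBoxAt{\vec{\alpha}+\vec{\beta}}}$; it is perfectly consistent with $\abs{\expSumSBox}$ taking the value $P^n$ at a single point $\vec{\alpha}_0$ deep in the minor arcs and being tiny everywhere else, since then the minimum is always small. Your proposed fix --- choosing $\vec{\beta}$ by Dirichlet approximation and arguing that $\abs{\expSumSBoxAt{\vec{\alpha}+\vec{\beta}}}$ is ``at least as large as, or comparable to'' $\abs{\expSumSBox}$ by averaging against the moat estimate --- does not work: nothing in the hypotheses forces a nearby point to carry a comparably large value, and the moat estimate controls only the \emph{measure} of level sets, not the value at the specific translate you need. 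Without the supremum bound the layer-cake computation gives only $P^{-dR}P^{(n+\epsilon)a}T_1^{1-a}$ with $T_1$ possibly as large as $P^n$, i.e.\ $P^{n-dR+\epsilon a}$, which is not $o(P^{n-dR})$.

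The paper closes this gap by importing the bound from outside: Lemma~\ref{1.lem:weyl's_ineq} (Birch's Weyl differencing in the refined form of Dietmann and Schindler) gives $\sup_{\vec{\alpha}\in\minorarcs}\abs{\expSumSBox}\ll_\epsilon P^{n-\Delta\delta_0+\epsilon}$ with $\delta_0>0$. This is precisely where the two hypotheses you did not use --- that the $f_i$ have \emph{integer} coefficients and that the leading forms $f^{[d]}_i$ are \emph{linearly independent} --- enter the argument; the latter guarantees $\delta_0\geq \tfrac{1}{(d-1)2^{d-1}R}$. A secondary instance of the same issue occurs in your major-arc paragraph: absolute convergence of the singular series with a rate $Q^{-\delta_1}$, $\delta_1$ depending only on $\cancellation,d,R$, requires not just the spacing bound on the $\Sloc{q}{\vec{a}}$ deduced from \eqref{1.eqn:assumed_arcs} (parts~\ref{1.itm:local_moats} and~\ref{1.itm:measure_of_localarcs} of Lemma~\ref{1.lem:frakS}) but also the individual decay $\abs{\Sloc{q}{\vec{a}}}\ll q^{-\delta_0+\epsilon''}$ of part~\ref{1.itm:bound_on_Sq}, which again comes from the same external Weyl-type lemma rather than from \eqref{1.eqn:assumed_arcs}.
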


We comment on the role of \eqref{1.eqn:assumed_arcs}. If the  $f_i$ have integer coefficients, then we have
\begin{equation}\label{1.eqn:circle_method}
\numSolnsInBox(P)
=
\int_{\clsd{0}{1}^R} \expSumSBox \,\mathrm{d}\vec{\alpha}.
\end{equation}
If both $\expSumSBox$ and $\expSumSBoxAt{\vec{\alpha}+\vec{\beta}}$ are large then  \eqref{1.eqn:assumed_arcs} implies that one of the terms $P^{-d}\supnorm{\vec{\beta}}^{-1}$ or $\supnorm{\vec{\beta}}^{\frac{1}{d-1}}$ must be large. In particular, the points $\vec{\alpha}$ and $\vec{\alpha}+\vec{\beta}$ must either be  very close  or somewhat far apart. In this sense \eqref{1.eqn:assumed_arcs} is a ``repulsion principle" for the sum $\expSumSBox$. We can use this fact to bound the measure of the set where $\expSumSBox$ is large, and this will enable us to reduce \eqref{1.eqn:circle_method} to an integral over major arcs.

To see the source of the condition $\cancellation>dR$ in Proposition~\ref*{1.prop:circle_method}, consider the case
\begin{equation}\label{1.eqn:amount_of_cancellation}
\abs{\expSumSBox} = \abs{\expSumSBoxAt{\vec{\alpha}+\vec{\beta}}} = CP^{n-\cancellation+\epsilon}.
\end{equation}
In general we always have
\begin{equation*}
\max \setbrax[]{  P^{-d}\supnorm{\vec{\beta}}^{-1}  ,\,   \supnorm{\vec{\beta}}^{\frac{1}{d-1}}  }^\cancellation \geq P^{-\cancellation},
\end{equation*}
with equality when $\supnorm{\vec{\beta}} = P^{1-d}$ holds. So in the case \eqref{1.eqn:amount_of_cancellation}, the assumption \eqref{1.eqn:assumed_arcs} is trivial. In other words \eqref{1.eqn:assumed_arcs} might still be satisfied even if the function $\expSumSBox$ had absolute value $P^{n-\cancellation+\epsilon}$ at every point $\vec{\alpha}$ in real $R$-space. This will lead to an error term of size at least $P^{n-\cancellation+\epsilon}$ in evaluating the integral \eqref{1.eqn:circle_method}. Hence we require $\cancellation>dR$ in the proposition above in order for the error term to be smaller than the main term.

\subsection{Mean values from bounds of the form \eqref{1.eqn:assumed_arcs}}\label{1.sec:moat_lemmas}

We show that the bound \eqref{1.eqn:assumed_arcs} implies upper bounds for the integral of the function $\expSumSBox$ over any bounded measurable set. M\"uller~\cite{mullerSystemsQuadIneqsAndValueDistns} and Bentkus and G\"otze~\cite{bentkusGotzeEllipsoids,bentkusGotzeDistributionQuadForms} previously used similar ideas to treat quadratic forms with real coefficients.

We begin with a technical lemma.

\begin{lemma}\label{1.lem:from_moats_to_mean_values}
	Let  $\inradius:\open{0}{\infty}\to\open{0}{\infty}$ be a strictly decreasing  bijection, and let $\outradius:\open{0}{\infty}\to\open{0}{\infty}$ be a strictly increasing  bijection. Write $\inradius^{-1}$ and $\outradius^{-1}$ for the inverses of these maps. Let $\nu>0$ and let $E_0$ be a hypercube in $\bbR^R$ whose sides are of length $\nu$ and parallel to the coordinate axes. Let $E$ be a measurable subset of $E_0$ and let $\varphi: E\to\clsdopen{0}{\infty}$ be a measurable function.
	
	Suppose that for all $\vec{\alpha},\vec{\beta}\in\bbR^R$ such that $\vec{\alpha}\in E$ and $\vec{\alpha}+\vec{\beta}\in E$, we have
	\begin{equation}\label{1.eqn:arcs_for_random_function}
	\min\setbrax*{ \varphi(\vec{\alpha}) , \,  \varphi(\vec{\alpha}+\vec{\beta}) }
	\leq
	\max \setbrax[\big]{  \inradius^{-1}(\supnorm{\vec{\beta}})  ,\,   \outradius^{-1}(\supnorm{\vec{\beta}})  }.
	\end{equation}
	Then, for any integers $k$ and $\ell$ with  $k<\ell$, we have
	\begin{align}
	\int_{E} \varphi(\vec{\alpha}) \,\mathrm{d}\vec{\alpha}
	\ll_R {}&
	\nu^R2^k
	+
	\sum_{i=k}^{\ell-1}2^i
	\brax[\bigg]{
		\frac{
			\nu\inradius(2^i)}{\min\setbrax{\outradius(2^i),\nu}}}^R
	\nonumber
	\\
	&+
	\brax[\bigg]{\frac{\nu\inradius(2^\ell)}{\min\setbrax{\outradius(2^\ell),\nu}}}^R		\sup_{\vec{\alpha}\in E} \varphi\brax{\vec{\alpha}}
	,
	\label{1.eqn:from_moats_to_mean_values}
	\end{align}
	where the implicit constant depends only on $R$.
\end{lemma}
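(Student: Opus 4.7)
The plan is to prove \eqref{1.eqn:from_moats_to_mean_values} by combining the layer-cake identity
\begin{equation*}
\int_E \varphi(\vec{\alpha})\,\mathrm{d}\vec{\alpha} = \int_0^\infty \lambda(E_t)\,\mathrm{d}t, \qquad E_t = \set{\vec{\alpha}\in E : \varphi(\vec{\alpha})>t},
\end{equation*}
with a pointwise bound on $\lambda(E_t)$ read off from the repulsion hypothesis \eqref{1.eqn:arcs_for_random_function}. Since $\inradius^{-1}$ is strictly decreasing and $\outradius^{-1}$ is strictly increasing, \eqref{1.eqn:arcs_for_random_function} forces any two points $\vec{\alpha},\vec{\alpha}'\in E_t$ to satisfy either $\supnorm{\vec{\alpha}-\vec{\alpha}'}<\inradius(t)$ or $\supnorm{\vec{\alpha}-\vec{\alpha}'}>\outradius(t)$; the ``moat'' of pairwise separations in $\clsd{\inradius(t)}{\outradius(t)}$ is forbidden.

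Next I would extract a simultaneous covering and packing from this moat structure. Take $\vec{\alpha}_1,\dotsc,\vec{\alpha}_N\in E_t$ to be a maximal family with pairwise sup-distances $\geq \inradius(t)$; maximality makes $N$ finite since $E_t$ is bounded. By the moat condition these distances must in fact exceed $\outradius(t)$, while by maximality every point of $E_t$ lies within sup-distance $\inradius(t)$ of some $\vec{\alpha}_j$, giving $\lambda(E_t)\leq N(2\inradius(t))^R$. A standard packing bound inside $E_0$ then produces
\begin{equation*}
N \ll_R \brax[\bigg]{\frac{\nu}{\min\setbrax{\outradius(t),\nu}}}^R,
\end{equation*}
since when $\outradius(t)>\nu$ the diameter of $E_0$ forces $N\leq 1$, and when $\outradius(t)\leq\nu$ the disjoint open sup-balls of radius $\outradius(t)/2$ around the $\vec{\alpha}_j$ fit inside a hypercube of side $\nu+\outradius(t)\leq 2\nu$. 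Combining these two estimates yields the key level-set bound
\begin{equation*}
\lambda(E_t) \ll_R \brax[\bigg]{\frac{\nu\,\inradius(t)}{\min\setbrax{\outradius(t),\nu}}}^R.
\end{equation*}

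With this level-set bound in hand I would split the layer-cake integral at the dyadic heights $2^k,2^{k+1},\dotsc,2^\ell$. On $\clsd{0}{2^k}$ the trivial bound $\lambda(E_t)\leq \nu^R$ contributes $\nu^R 2^k$. On each interval $\clsd{2^i}{2^{i+1}}$ with $k\leq i<\ell$, applying the level-set bound at $t=2^i$ uniformly across the interval (justified by monotonicity of $E_t$ in $t$, of $\inradius$, and of $\outradius$) and multiplying by the length $2^i$ yields the $i$-th summand. For $t\geq 2^\ell$ the set $E_t$ is empty once $t$ exceeds $\sup_{\vec{\alpha}\in E}\varphi(\vec{\alpha})$, so applying the level-set bound at $t=2^\ell$ and bounding the remaining interval length by this supremum produces the final term.

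The main obstacle is the simultaneous covering-and-packing step above: the family $\set{\vec{\alpha}_j}$ must cover $E_t$ by sup-balls of the smaller radius $\inradius(t)$ while still being packed inside $E_0$ at the much larger separation $\outradius(t)$, and this juxtaposition is exactly what the moat condition supplies. One also has to verify that the degenerate regime $\inradius(t)\geq \outradius(t)$, in which \eqref{1.eqn:arcs_for_random_function} is vacuous, is harmless: there the level-set bound already exceeds the trivial $\nu^R$, so applying it uniformly in the middle sum costs nothing.
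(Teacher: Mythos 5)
Your proof is correct and follows essentially the same route as the paper: both derive the level-set bound $\lambda\set{\varphi>t}\ll_R\brax[\big]{\nu\inradius(t)/\min\set{\outradius(t),\nu}}^R$ from the repulsion dichotomy and then sum dyadically over heights $2^k,\dotsc,2^\ell$. The only (cosmetic) difference is in implementing the covering--packing step: the paper tiles $E_0$ by hypercubes of side $\tfrac12\outradius(t)$ and shows each meets the level set in a box of side $2\inradius(t)$, whereas you use a maximal $\inradius(t)$-separated net, which the moat condition forces to be $\outradius(t)$-separated; both yield the same estimate.
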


Note that if we choose
\[
\varphi(\vec{\alpha})=\abs{\expSumSBox} / CP^{n+\epsilon},
\qquad
\inradius(\tee)
= P^{-d}\tee^{-1/\cancellation},
\qquad
\outradius(\tee)
=
\tee^{(d-1)/\cancellation},
\]
then the hypotheses \eqref{1.eqn:assumed_arcs} and \eqref{1.eqn:arcs_for_random_function} become identical. This will enable us to apply Lemma~\ref*{1.lem:from_moats_to_mean_values} to bound the integral $\int_{\minorarcs}\expSumSBox\,\mathrm{d}\vec{\alpha}$, where $\minorarcs$ is a set of minor arcs on which $\expSumSBox$ is somewhat small.

\begin{proof}
	The strategy of proof is as follows. We deduce from \eqref{1.eqn:arcs_for_random_function} that  if both $\varphi(\vec{\alpha})\geq \tee$ and $\varphi(\vec{\alpha}+\vec{\beta})\geq \tee$ hold, then either $\supnorm{\vec{\beta}}\leq \inradius(\tee)$ or $\supnorm{\vec{\beta}}\geq \outradius(\tee)$ must hold. From this we will show that the set of points $\vec{\alpha}$ satisfying the bound $\varphi(\vec{\alpha})\geq \tee$  can be covered by a collection of hypercubes of side $2 \inradius(\tee)$, each of which is separated from the others by a gap of size $\tfrac{1}{2}\outradius(\tee)$. The lemma will follow upon  bounding the total Lebesgue measure of this collection of hypercubes.
	
	For each $\tee>0$ we set
	\begin{equation}
	\label{1.eqn:def_of_superlevel_set_for_random_function}
	D\brax{\tee}
	=
	\set{\vec{\alpha}\in E\suchthat \varphi(\vec{\alpha})\geq \tee}.
	\end{equation}
	Observe that if $\vec{\alpha}$ and $\vec{\alpha}+\vec{\beta}$ both belong to $D\brax{\tee{}}$, then \eqref{1.eqn:arcs_for_random_function} implies that 
	\[
	\max \setbrax[\big]{  \inradius^{-1}(\supnorm{\vec{\beta}})  ,\,   \outradius^{-1}(\supnorm{\vec{\beta}})  }
	\geq
	t,
	\]
	from which it follows that either $\supnorm{\vec{\beta}}\leq \inradius(\tee)$ or $\supnorm{\vec{\beta}}\geq \outradius(\tee)$ must hold.
	
	Let $\frakb$ be any hypercube in $\bbR^R$ whose sides are of length $\frac{1}{2}\outradius(\tee)$  and parallel to the coordinate axes. We claim that $\frakb \cap D\brax{\tee}$ is contained in a hypercube \tfrakB whose sides are of length $2\inradius(\tee)$. To see this let $\vec{\alpha}$ be any fixed vector lying in $\frakb \cap D\brax{\tee}$, and set 
	\begin{equation*}
	\frakB
	=
	\set{\vec{\alpha}+\vec{\beta}\suchthat \vec{\beta}\in\bbR^R, \supnorm{\vec{\beta}}\leq \inradius(\tee)}.
	\end{equation*}
	If  $\vec{\alpha}+\vec{\beta}$ belongs to $\frakb \cap D\brax{\tee}$, then by definition of  \tfrakb the bound $\supnorm{\vec{\beta}} \leq \frac{1}{2}\outradius(\tee)$ must hold. In particular $\supnorm{\vec{\beta}} <\outradius(\tee)$, so by the comments after \eqref{1.eqn:def_of_superlevel_set_for_random_function},  the bound  $\supnorm{\vec{\beta}} \leq \inradius(\tee)$ must hold. This shows that $\vec{\alpha}+\vec{\beta} \in \frakB$, and hence that $\frakb \cap D\brax{\tee}\subset \frakB$, as claimed. In particular the Lebesgue measure of $\frakb \cap D\brax{\tee}$ is at most
	$
	(2\inradius(\tee))^R.
	$
	
	The set $D(\tee)$ is contained in $E_0$, a hypercube of side $\nu$. So in order to cover the set $D(\tee)$  with boxes \tfrakb of side $\tfrac{1}{2}\outradius(\tee)$ one needs  at most
	\[
	\ll_R \frac{\nu^R}{\min\setbrax{\outradius(\tee),\nu}^{R}}
	\]
	boxes. Summing over all the boxes \tfrakb, it follows that
	\begin{equation}\label{1.eqn:measure_of_arcs}
	L\brax{\tee}
	\ll_R
	\brax[\bigg]{\frac{\nu\inradius(\tee)}{\min\setbrax{\outradius(\tee),\nu}}}^R,
	\end{equation}
	where we write $L\brax{\tee}$ for the Lebesgue measure of $D\brax{\tee}$. So we have
	\begin{align*}
	\int_{E} \varphi (\vec{\alpha})\,\mathrm{d}\vec{\alpha}
	={}
	&
	\int\limits_{ E \setminus D\brax{2^k} }
	\varphi(\vec{\alpha})
	\,\mathrm{d}\vec{\alpha}
	+ \sum_{i=k}^{\ell-1}\,
	\int\limits_{ E \cap \brax[]{D\brax{2^{i}} \setminus D\brax{2^{i+1}}}}
	\varphi(\vec{\alpha})
	\,\mathrm{d}\vec{\alpha}
	\nonumber
	\\&
	+
	\int\limits_{E\cap D\brax{2^\ell}}
	\varphi(\vec{\alpha})
	\,\mathrm{d}\vec{\alpha}
	\\
	{}\leq{}
	&
	\nu^R 2^k
	+ \sum_{i=k}^{\ell-1}2^{i+1}
	L\brax{2^i}
	+
	L\brax{2^\ell}
	\sup_{\vec{\alpha}\in E} \varphi(\vec{\alpha}).
	\end{align*}
	With \eqref{1.eqn:measure_of_arcs} this yields \eqref{1.eqn:from_moats_to_mean_values}.
\withqed\end{proof}

We now apply Lemma~\ref*{1.lem:from_moats_to_mean_values} to deduce mean values from bounds of the form \eqref{1.eqn:assumed_arcs}. The following result is stated in greater generality than is strictly required here, to facilitate future applications to forms with real coefficients. 

\begin{lemma}\label{1.lem:mean_value_from_power-law_moats}
	Let $T$ be a complex-valued measurable function on $\bbR^R$. Let $E_0$ be a hypercube in $\bbR^R$ whose sides are of length $\nu$ and parallel to the coordinate axes, and let $E$ be a measurable subset of $E_0$. Suppose that the inequality
	\begin{equation}
	\label{1.eqn:power-law_moats}
	\min\setbrax*{ \abs*{\frac{T\brax{ \vec{\alpha} }}{P^{n}}} , \,  \abs*{\frac{T\brax{ \vec{\alpha}+\vec{\beta} }}{P^{n}}} }
	\leq
	\max \setbrax[\big]{  P^{-d}\supnorm{\vec{\beta}}^{-1} ,\,   \supnorm{\vec{\beta}}^{\frac{1}{d-1}}  }^\cancellation
	\end{equation}
	holds for  some $P\geq 1$ and $\cancellation>0$ and all $\vec{\alpha}, \vec{\beta} \in \bbR^R$. Suppose further that 
	\begin{equation}\label{1.eqn:weyl's_ineq_mean_value_lem}
	\sup_{\vec{\alpha}\in E} \abs{T\brax{\vec{\alpha}}}
	\leq
	P^{n-\delta}
	\end{equation}
	for some $\delta\geq 0$. Then we have
	\begin{multline}\label{1.eqn:mean_value_lem}
	\int_{E} {T\brax{\vec{\alpha}}} \,\mathrm{d}\vec{\alpha}
	\\
	\ll_{\cancellation,d,R}
	\left\{
	\begin{array}{@{}l@{}l@{\hspace{1.7em}}l@{}}
	\nu^R P^{n-\cancellation}
	&{}+
	P^{n-\cancellation-(d-1)R}
	&\text{if }	\cancellation < R
	\\
	\nu^R P^{n-\cancellation}
	&{}+
	P^{n-dR}\log P
	&\text{if } \cancellation = R
	\\
	\nu^R P^{n-\cancellation}
	&{}+
	P^{n-dR-\delta\brax{ 1-\frac{R}{\cancellation}}}
	&\text{if } R < \cancellation < dR
	\\
	\nu^R P^{n-\cancellation}\log P
	&{}+
	P^{n-dR-\delta\brax{ 1-\frac{R}{\cancellation}}}
	&\text{if } \cancellation = dR
	\\
	\nu^R P^{n-dR-\delta(1-\frac{dR}{\cancellation})}
	&{}+
	P^{n-dR-\delta\brax{ 1-\frac{R}{\cancellation}}}
	&\text{if } \cancellation > dR.
	\end{array}
	\right.
	\end{multline}
\end{lemma}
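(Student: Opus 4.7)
The plan is to apply Lemma~\ref{1.lem:from_moats_to_mean_values} with
\[
\varphi(\vec{\alpha}) = \abs{T(\vec{\alpha})}/P^{n},\qquad \inradius(\tee) = P^{-d}\tee^{-1/\cancellation},\qquad \outradius(\tee) = \tee^{(d-1)/\cancellation}.
\]
These are chosen so that $\inradius^{-1}(\supnorm{\vec{\beta}}) = \brax{P^{-d}\supnorm{\vec{\beta}}^{-1}}^{\cancellation}$ and $\outradius^{-1}(\supnorm{\vec{\beta}}) = \supnorm{\vec{\beta}}^{\cancellation/(d-1)}$, so that the hypothesis \eqref{1.eqn:arcs_for_random_function} of that lemma reduces to the present hypothesis \eqref{1.eqn:power-law_moats}. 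I then select dyadic parameters $k$ and $\ell$ with $2^{k}$ comparable to $P^{-\cancellation}$ and $2^{\ell}$ comparable to $\sup_{\vec{\alpha}\in E}\varphi(\vec{\alpha})\leq P^{-\delta}$; the degenerate case $\delta\geq\cancellation$ is handled separately by the trivial estimate $\int_{E}\abs{T(\vec{\alpha})}\,\mathrm{d}\vec{\alpha}\leq\nu^{R}P^{n-\delta}$.

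With these choices, the first boundary term of \eqref{1.eqn:from_moats_to_mean_values}, once multiplied by $P^{n}$, already yields the contribution $\nu^{R}P^{n-\cancellation}$ appearing in four of the five cases. The heart of the argument is the evaluation of the dyadic sum. A direct computation gives
\[
2^{i}\brax[\bigg]{\frac{\nu\inradius(2^{i})}{\min\setbrax{\outradius(2^{i}),\nu}}}^{R}
=
\begin{cases}
\nu^{R}P^{-dR}\,2^{i(\cancellation-dR)/\cancellation} & \text{if }\outradius(2^{i})\leq\nu,\\
P^{-dR}\,2^{i(\cancellation-R)/\cancellation} & \text{if }\outradius(2^{i})>\nu,
\end{cases}
\]
so that the sum splits at the crossover $2^{i^{\ast}}=\nu^{\cancellation/(d-1)}$ into two geometric series. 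In each range the dominant contribution sits at the top or the bottom according to the signs of $\cancellation-dR$ and $\cancellation-R$ respectively; combining these with the final endpoint term $\sup_{E}\varphi\cdot(\nu\inradius(2^{\ell})/\min\setbrax{\outradius(2^{\ell}),\nu})^{R}$ of \eqref{1.eqn:from_moats_to_mean_values} and multiplying through by $P^{n}$ yields the five cases of \eqref{1.eqn:mean_value_lem}. The logarithmic factors appear precisely when one of $\cancellation-R$ or $\cancellation-dR$ vanishes and the geometric series degenerates into an arithmetic one.

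The main technical obstacle is the bookkeeping of cases: in each regime fixed by the signs of $\cancellation-R$ and $\cancellation-dR$, and by whether the crossover $i^{\ast}$ lies in $[k,\ell-1]$ (which depends on the size of $\nu$ relative to $P^{-(d-1)}$ and $P^{-\delta(d-1)/\cancellation}$), one has to verify that the two geometric sums together with the endpoint terms really collapse to the stated two-term upper bound uniformly in $\nu$. For instance, the error $P^{n-\cancellation-(d-1)R}$ in the case $\cancellation<R$ arises from the $\outradius(2^{i})>\nu$ regime at $i\approx k$ when $\nu$ is so small that the entire dyadic range lies in that regime, whereas the terms containing $\delta$ come from evaluating the top of the corresponding geometric sum at $2^{\ell}\approx P^{-\delta}$ via the Weyl-type hypothesis \eqref{1.eqn:weyl's_ineq_mean_value_lem}.
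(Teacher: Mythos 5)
Your proposal is correct and follows essentially the same route as the paper: the same choices of $\varphi$, $\inradius$, $\outradius$ in Lemma~\ref{1.lem:from_moats_to_mean_values}, the same dyadic endpoints $2^k\asymp P^{-\cancellation}$ and $2^\ell\asymp P^{-\delta}$, the same reduction to the case $\cancellation>\delta$, and the same absorption of the final boundary term via \eqref{1.eqn:weyl's_ineq_mean_value_lem}. The only cosmetic difference is that you split the dyadic sum at the crossover $\outradius(2^i)=\nu$, whereas the paper bounds $\min\setbrax{\outradius(2^i),\nu}^{-1}$ by the sum of the two reciprocals and sums both geometric series over the full range, which sidesteps the case analysis on the location of the crossover.
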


Later we will take $T\brax{\vec{\alpha}} = C^{-1} P^{-\epsilon} \expSumSBox$ where $C$ is as in Proposition~\ref*{1.prop:circle_method}. We will take $E$ to be a set of minor arcs $\minorarcs$, and we will interpret the integral $\int _{\minorarcs}  \expSumSBox\,\mathrm{d}\vec{\alpha}$ as an error term, which will need to be smaller than a main term of size around $P^{n-dR}$. As a result, only the case $\cancellation>dR$ of the bound \eqref{1.eqn:mean_value_lem} will be satisfactory for the present application.

\begin{proof}
	We apply Lemma~\ref*{1.lem:from_moats_to_mean_values} with
	\begin{equation}\label{1.eqn:choice_of_phi,_inradius,_outradius}
	\varphi(\vec{\alpha}) = \frac{\abs{T(\vec{\alpha})}}{P^n},
	\quad
	\inradius(\tee) = P^{-d}\tee^{-1/\cancellation},
	\quad
	\outradius(\tee) = \tee^{(d-1)/\cancellation},
	\end{equation}
	noting that the bound \eqref{1.eqn:arcs_for_random_function} then follows from \eqref{1.eqn:power-law_moats}.
	
	It remains to choose the parameters $k$ and $\ell$ from \eqref{1.eqn:from_moats_to_mean_values}. We will choose these so that the right-hand side of \eqref{1.eqn:from_moats_to_mean_values} is dominated by the sum $\sum_{i=k}^{\ell-1}$, rather than either of the other two terms. More precisely,  take
	\begin{equation}\label{1.eqn:def_of_tee-sub-i}
	k
	=
	\floor{\log_2 P^{-\cancellation}},
	\qquad
	\ell
	=
	\ceil{\log_2 P^{-\delta} },
	\end{equation}
	observing that
	\begin{equation}
	\tfrac{1}{2}P^{-\cancellation}
	<
	2^k
	\leq
	P^{-\cancellation},
	\quad
	P^{-\delta}
	\leq
	2^\ell
	<
	2 P^{-\delta}.
	\label{1.eqn:tee-sub-k_and_tee-sub-ell}
	\end{equation}
	We may assume that $\cancellation>\delta$, for otherwise the bound $\int_E T(\vec{\alpha})\,\mathrm{d}\vec{\alpha} \leq \nu^R P^{n-\delta}$, which follows from \eqref{1.eqn:weyl's_ineq_mean_value_lem}, is stronger than any of the bounds listed in \eqref{1.eqn:mean_value_lem}. We then have $k < \ell$ and so this choice of $k, \ell$ is admissible in 
	Lemma~\ref*{1.lem:from_moats_to_mean_values}. Hence  \eqref{1.eqn:from_moats_to_mean_values} holds, and substituting in our choices \eqref{1.eqn:choice_of_phi,_inradius,_outradius} for the parameters yields
	\begin{align}
	\int_{E} \frac{\abs{T\brax{\vec{\alpha}}}}{P^n} \,\mathrm{d}\vec{\alpha}
	&\ll_R
	\nu^R2^k 
	+
	\sum_{i=k}^{\ell-1}
	2^i
	\brax[\bigg]{\frac{ 
			\nu
			P^{-d}  2^{-{i/\cancellation}}}{ \min\setbrax{2^{{(d-1)i/\cancellation}}, \nu }}}^R
	\nonumber
	\\
	&\hphantom{{}\ll_R{}}+\brax[\bigg]{
		\frac{ 
			\nu
			P^{-d}  2^{-{\ell/\cancellation}} }{ \min\setbrax{2^{{(d-1)\ell/\cancellation}}, \nu }}}^R
	\sup_{\vec{\alpha}\in E}  \frac{\abs{T\brax{\vec{\alpha}}}}{P^n}.
	\label{1.eqn:mean_value_I}
	\end{align}
	By \eqref{1.eqn:weyl's_ineq_mean_value_lem}  and \eqref{1.eqn:tee-sub-k_and_tee-sub-ell} we have $\sup_{\vec{\alpha}\in E}  \frac{\abs{T\brax{\vec{\alpha}}}}{P^n}\leq 2^\ell $, and so we may extend the sum in \eqref{1.eqn:mean_value_I} from $\sum_{i=k}^{\ell-1}$ to $\sum_{i=k}^\ell$ to obtain
	\begin{equation*}
	\int_{E} \frac{\abs{T\brax{\vec{\alpha}}}}{P^n} \,\mathrm{d}\vec{\alpha}
	\ll_R
	\nu^R 2^k
	+
	\sum_{i=k}^{\ell}
	2^i
	\brax[\bigg]{\frac{ 
			\nu
			P^{-d}  2^{-{i/\cancellation}} }{ \min\setbrax{2^{{(d-1)i/\cancellation}}, \nu } }}^R.
	\end{equation*}
	Since
	\[
	\frac{ P^{-d}  2^{-{i/\cancellation}}}{ \min\setbrax{2^{{(d-1)i/\cancellation}}, \nu }}
	\leq
	P^{-d}2^{-di/\cancellation}+\nu^{-1}P^{-d}2^{-i/\cancellation},
	\]
	we deduce that
	\begin{equation}\label{1.eqn:mean_value_II}
	\int_{E} \frac{\abs{T\brax{\vec{\alpha}}}}{P^n} \,\mathrm{d}\vec{\alpha}
	\ll_R
	\nu^R2^k +
	\sum_{i=k}^{\ell}
	\nu^R
	P^{-dR}    2^{i(1-dR/\cancellation)}
	+
	\sum_{i=k}^{\ell}
	P^{-dR}   2^{i(1-R/\cancellation)}.
	\end{equation}
	Note that
	\begin{equation*}
	\sum_{i=k}^{\ell}
	2^{i(1-dR/\cancellation)}
	\ll_{\cancellation,d,R}
	\begin{cases}
	2^{k\brax{1-dR/\cancellation}}
	&\text{if }\cancellation<dR
	\\
	\ell-k
	&\text{if }\cancellation = dR
	\\
	2^{\ell\brax{1-dR/\cancellation}}
	&\text{if }\cancellation>dR.
	\end{cases}
	\end{equation*}
	Recall from \eqref{1.eqn:tee-sub-k_and_tee-sub-ell} that we have $2^k \geq \tfrac{1}{2} P^{-\cancellation}$ and $2^\ell \leq 2 P^{-\delta}$, and observe that by \eqref{1.eqn:def_of_tee-sub-i} the bound $\ell-k \leq 2+\cancellation\log_2 P$ holds. It follows that 
	\begin{equation*}
	\sum_{i=k}^{\ell}
	2^{i(1-dR/\cancellation)}
	\ll_{\cancellation,d,R}
	\begin{cases}
	P^{\cancellation-dR}
	&\text{if }\cancellation<dR
	\\
	\log P
	&\text{if }\cancellation = dR
	\\
	P^{-\delta\brax{1-dR/\cancellation}}
	&\text{if }\cancellation>dR,
	\end{cases}
	\end{equation*}
	and reasoning similarly for $\sum_{i=k}^{\ell}
	2^{i(1-R/\cancellation)}$, we deduce from \eqref{1.eqn:mean_value_II}  that
	\begin{multline*}
	\int_{E} \frac{\abs{T\brax{\vec{\alpha}}}}{P^n} \,\mathrm{d}\vec{\alpha}
	\\
	\ll
	\left\{
	\begin{array}{@{}l@{}l@{\hspace{1.7em}}l@{}}
	\nu^R2^k+
	\nu^R  P^{-\cancellation}
	&{}+
	P^{n-\cancellation-(d-1)R}
	&\text{if }	\cancellation < R
	\\
	\nu^R2^k+
	\nu^R  P^{-\cancellation}
	&{}+
	P^{-dR}\log P
	&\text{if } \cancellation = R
	\\
	\nu^R2^k+
	\nu^R  P^{-\cancellation}
	&{}+
	P^{-dR-\delta(1-R/\cancellation)}
	&\text{if } R < \cancellation < dR
	\\
	\nu^R2^k+
	\nu^R P^{-\cancellation}\log P
	&{}+
	P^{-dR-\delta(1-R/\cancellation)}
	&\text{if } \cancellation = dR
	\\
	\nu^R2^k+
	\nu^R P^{-dR-\delta(1-dR/\cancellation)}
	&{}+
	P^{-dR-\delta(1-R/\cancellation)}
	&\text{if } \cancellation > dR,
	\end{array}
	\right.
	\end{multline*}
	with an implicit constant depending only on $\cancellation,d,$ and $R$. One final application of the bound $2^k \leq P^{-\cancellation}$ from \eqref{1.eqn:tee-sub-k_and_tee-sub-ell} completes the proof of \eqref{1.eqn:mean_value_lem}.
\withqed\end{proof}

\subsection{Notation for the circle method}\label{1.sec:circle_method_notation}

We split the domain $\clsd{0}{1}^R$ into two regions. Let $\Delta \in \open{0}{1}$ and set
\begin{align}\label{1.eqn:def_of_major_arcs}
\majorarcs
&=
\bigcup_{\substack{q \in \bbN \\ q\leq P^\Delta}}
\bigcup_{\substack{0 \leq a_1,\dotsc,a_R \leq q\\ (a_1,\dotsc ,a_R,q) =1}}
\set[\big]{\vec{\alpha}\in\clsdopen{0}{1}^R \suchthat \supnormbig{\vec{\alpha}-\tfrac{\vec{a}}{q}} <  P^{\Delta-d} },
\\
\nonumber
\minorarcs
&=
\clsd{0}{1}^R\setminus\majorarcs.
\end{align}
We give local analogues of $\expSumSBox$ and of the integral $\int_{\majorarcs}\expSumSBox\,\mathrm{d}\vec{\alpha}$. We set
\begin{equation*}
\Sloc{q}{\vec{a}}
=
q^{-n}
\sum_{\vec{y}\in \set{1,\dotsc,q}^n}
e\brax[\big]{\aOverQDotLittleF(\vec{y})}
\end{equation*}
for each $q \in \bbN$ and $\vec{a}\in\bbZ^R$, and we put
\begin{equation*}
\singSeriesIncomplete{P}
=
\sum_{q\leq P^\Delta}
\sum_{\substack{\vec{a}\in\set{1,\dotsc,q}^R \\ (a_1,\dotsc ,a_R,q) =1}}
\Sloc{q}{\vec{a}}.
\end{equation*}
For each $\vec{\gamma}\in\bbR^R$, set
\begin{equation*}
\Sloc{\infty}{\vec{\gamma}}
=
\int_{\weightbox}
e\brax[\big]{\gammaDotLeadingPart(\vec{\tee})}
\,\mathrm{d}\vec{\tee},
\end{equation*}
and let
\begin{equation*}
\singIntegralBoxIncomplete{P}
=
\int\limits_{\substack{\vec{\alpha}\in\bbR^R \\ \supnorm{\vec{\alpha}}\leq P^{\Delta-d} }} P^n \Sloc{\infty}{P^d\vec{\alpha}}  \,\mathrm{d}\vec{\alpha}.
\end{equation*}
Finally we define a quantity $\delta_0$ which in some sense measure the extent to which the system $f_i$ is  singular. Let  $\sigma_\bbZ\in \set{ 0,\dotsc,n }$ be as in \eqref{1.eqn:def_of_sigma-sub-Z}, and let
\begin{equation*}
\delta_0
=
\frac{n-\sigma_\bbZ}{(d-1)2^{d-1}R}.
\end{equation*}

\subsection{The minor arcs}

On the minor arcs $\minorarcs$ we have the following bound, compare \eqref{1.eqn:weyl's_ineq_mean_value_lem} in Lemma~\ref*{1.lem:mean_value_from_power-law_moats}.

\begin{lemma}[Dietmann~\cite{dietmannWeylsIneq}, Schindler~\cite{schindlerWeylsIneq}]\label{1.lem:weyl's_ineq}
	Suppose that the polynomials $f_i$ have integer coefficients. Let $\Delta$, $\minorarcs$ and $\delta_0$ be as in \S\ref*{1.sec:circle_method_notation}, and let $\epsilon>0$. Let the sum $\expSumSBox$ be as in \eqref{1.eqn:def_of_S}. 
	Then we have
	\begin{equation}\label{1.eqn:weyl's_ineq}
	\sup_{\vec{\alpha}\in\minorarcs} \abs{\expSumSBox}
	\llcv{\epsilon}{d,n,R,\epsilon}
	P^{n-\Delta\delta_0 + \epsilon}	\end{equation}
	where the implicit constant depends only on $d,n, R,$ and $\epsilon$. The constant $\delta_0$ satisfies $\delta_0 \geq \frac{1}{(d-1)2^{d-1}R}$ whenever the forms $f^{[d]}_i$ are linearly independent. 
\end{lemma}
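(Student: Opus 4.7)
The plan is to follow Birch's strategy, with the refinement of Dietmann and Schindler that replaces $1+\dim W$ by the sharper quantity $\sigma_\bbZ$. The first step is to apply the standard $(d-1)$-fold Weyl differencing to $\abs{\expSumSBox}^{2^{d-1}}$. Since each iteration removes the contribution from lower-order monomials of $\alphaDotLittleF$, only the leading part $\alphaDotLeadingPart$ survives, and one obtains a bound of the shape
\[
\abs{\expSumSBox}^{2^{d-1}} \ll P^{(2^{d-1}-d)n}\sum_{\substack{\vecsuper{h}{1},\dotsc,\vecsuper{h}{d-1}\\ \supnorm{\vecsuper{h}{j}}\leq P}} \prod_{i=1}^n \min\!\brax[\big]{P,\, \lVert m^{(\alphaDotLeadingPart)}_i(\vecsuper{h}{1},\dotsc,\vecsuper{h}{d-1})\rVert^{-1}},
\]
where $\lVert\cdot\rVert$ denotes distance to the nearest integer and $\vec{m}^{(\alphaDotLeadingPart)}$ is as in Definition~\ref{1.def:aux_ineq}.

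Next, suppose for contradiction that $\abs{\expSumSBox}\geq P^{n-\Delta\delta_0+\epsilon}$ for some $\vec{\alpha}\in\minorarcs$. A Davenport-style shrinking argument then forces an unusually large number of tuples $(\vecsuper{h}{1},\dotsc,\vecsuper{h}{d-1})\in\clsd{-P}{P}^{(d-1)n}$ to satisfy $\supnormbig{\vec{m}^{(\alphaDotLeadingPart)}(\vecsuper{h}{1},\dotsc,\vecsuper{h}{d-1})-\vec{k}}\leq P^{-1+\Delta'}$ for some $\vec{k}\in\bbZ^n$ and a small parameter $\Delta'>0$. A further transference step, applied component by component in $\bbR^R$ via Dirichlet's theorem, then produces $q\in\bbN$ with $q\leq P^\Delta$ and $\vec{a}\in\bbZ^R$ with $(\vec{a},q)=1$ satisfying $\supnormbig{q\vec{\alpha}-\vec{a}}\leq P^{\Delta-d}$, placing $\vec{\alpha}\in\majorarcs$ and contradicting the hypothesis.

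The main obstacle, and the point where Dietmann--Schindler improves on Birch, is the algebraic-geometric input that makes ``unusually large'' quantitative: the number of integer tuples $(\vecsuper{h}{1},\dotsc,\vecsuper{h}{d-1})\in\clsd{-B}{B}^{(d-1)n}$ satisfying $\gradSomethingMultilinear{\aDotLeadingPart}{\vecsuper{h}{1},\dotsc,\vecsuper{h}{d-1}}=\vec{0}$ must be shown to be $\ll_\epsilon B^{(d-1)n-(n-\sigma_\bbZ)+\epsilon}$ for every $\vec{a}\in\bbZ^R\setminus\set{\vec{0}}$. This reduces to bounding the dimension of the affine variety cut out by $\vec{m}^{(\aDotLeadingPart)}=\vec{0}$: for generic fixed $(\vecsuper{h}{1},\dotsc,\vecsuper{h}{d-2})$ the remaining conditions on $\vecsuper{h}{d-1}$ are linear, with kernel contained in the affine cone over $\sing V(\aDotLeadingPart)$, which yields a dimension estimate of $(d-2)n+1+\dim\sing V(\aDotLeadingPart)$; taking the maximum over $\vec{a}\in\bbZ^R\setminus\set{\vec{0}}$ gives the uniform bound $(d-1)n-(n-\sigma_\bbZ)$. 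A standard integer-point count then converts the dimension estimate into the required count, and optimising the parameters produces \eqref{1.eqn:weyl's_ineq} with $\delta_0=(n-\sigma_\bbZ)/((d-1)2^{d-1}R)$. Finally, when the $f^{[d]}_i$ are linearly independent, for each nonzero $\vec{a}\in\bbZ^R$ the form $\aDotLeadingPart$ is a nonzero form of degree~$d$, so $V(\aDotLeadingPart)$ is a proper hypersurface in $\bbP^{n-1}_\bbR$ of dimension $n-2$; thus $\dim\sing V(\aDotLeadingPart)\leq n-2$, whence $\sigma_\bbZ\leq n-1$ and $\delta_0\geq 1/((d-1)2^{d-1}R)$.
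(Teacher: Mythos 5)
Your overall route is the standard Birch--Dietmann--Schindler argument, and it is essentially what the paper relies on; note, though, that the paper does not reprove this at all, but simply invokes Lemma~4 of Dietmann or Lemma~2.2 of Schindler with the parameter choice $\theta=(\Delta-\epsilon)/((d-1)R)$, and only supplies the final observation (identical to yours) that linear independence of the $f^{[d]}_i$ forces $\sigma_\bbZ\leq n-1$ and hence $\delta_0\geq \frac{1}{(d-1)2^{d-1}R}$. One step of your sketch is stated inaccurately: in the dimension estimate for the variety $\vec{m}^{(\aDotLeadingPart)}=\vec{0}$, the kernel of the linear map $\vecsuper{h}{d-1}\mapsto \gradSomethingMultilinear{\aDotLeadingPart}{\vecsuper{h}{1},\dotsc,\vecsuper{h}{d-1}}$ for generic fixed $\vecsuper{h}{1},\dotsc,\vecsuper{h}{d-2}$ is \emph{not} in general contained in the affine cone over $\sing V(\aDotLeadingPart)$ when $d\geq 3$. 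The correct (and standard) way to get the bound $\dim\leq (d-2)n+1+\dim\sing V(\aDotLeadingPart)$ is to restrict to the diagonal $\vecsuper{h}{1}=\dotsb=\vecsuper{h}{d-1}=\vec{x}$, where $\gradSomethingMultilinear{\aDotLeadingPart}{\vec{x},\dotsc,\vec{x}}$ is proportional to $\grad(\aDotLeadingPart)(\vec{x})$, and use that intersecting with a linear space of codimension $(d-2)n$ lowers the dimension of each component (all of which pass through the origin) by at most $(d-2)n$. With that repair the sketch matches the cited proofs.
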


\begin{proof}
	The bound \eqref{1.eqn:weyl's_ineq}  follows either from Lemma~4 in Dietmann~\cite{dietmannWeylsIneq}, or from Lemma~2.2 in Schindler~\cite{schindlerWeylsIneq}, by setting the parameter $\theta$ in either author's work to be
	\begin{equation*}
	\theta
	=
	\frac{\Delta-\epsilon}{(d-1)R},
	\end{equation*}
	and taking $P\ggcv{\epsilon}{d,n,R,\epsilon} 1$ sufficiently large. Provided the forms $f^{[d]}_i$ are linearly independent, the variety $V(\aDotLeadingPart)$ is a proper subvariety of $\bbP_\bbQ^{n-1}$  for  each $\vec{a}\in\bbZ^R\setminus\set{\vec{0}}$, and so  $\sigma_\bbZ \leq n-1$ holds, by \eqref{1.eqn:def_of_sigma-sub-Z}. This implies that  $\delta_0 \geq \frac{1}{(d-1)2^{d-1}R}$, as claimed.
\withqed\end{proof}

\subsection{The major arcs}\label{1.sec:major_arcs}

In this section we estimate $\int_{\majorarcs}\expSumSBox\,\mathrm{d}\vec{\alpha}$, the integral over the major arcs.

\begin{lemma}\label{1.lem:Sloc}
	Suppose that the polynomials $f_i$ have integer coefficients. Let $\Delta$, \tmajorarcs, $\Sloc{\infty}{\vec{\gamma}}$, $\Sloc{q}{\vec{a}}$, $\singSeriesIncomplete{P}$ and $\singIntegralBoxIncomplete{P}$ be as in \S\ref*{1.sec:circle_method_notation}. Then for all $\vec{a}\in\bbZ^R$ and all $q\in\bbN$ such that $q \leq P$, we have
	\begin{equation}\label{1.eqn:Sloc}
	\expSumSBoxBigAt{\tfrac{\vec{a}}{q}+\vec{\alpha}}
	=
	P^n\Sloc{q}{\vec{a}}  \Sloc{\infty}{P^d\vec{\alpha}}
	+
	\Ocv{}{\vec{f}}\brax[]{  qP^{n-1}\brax{ 1+P^{d}\supnorm{\vec{\alpha}} }},
	\end{equation}
	and it follows that
	\begin{equation}\label{1.eqn:integral_over_frakM}
	\int_{\majorarcs} \expSumSBox \,\mathrm{d}\vec{\alpha}
	=
	\singSeriesIncomplete{P}\singIntegralBoxIncomplete{P}
	+ \Ocv{}{\vec{f}} \brax[\big]{ P^{n-dR+(2R+3)\Delta -1}}.
	\end{equation}
\end{lemma}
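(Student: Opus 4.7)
My plan is to prove \eqref{1.eqn:Sloc} by a two-step approximation of the exponential sum, and then derive \eqref{1.eqn:integral_over_frakM} by integrating over the pieces of \tmajorarcs.

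For \eqref{1.eqn:Sloc}, I write each $\vec{x}\in P\weightbox\cap\bbZ^n$ as $\vec{x}=q\vec{u}+\vec{y}$ with $\vec{y}\in\set{1,\dotsc,q}^n$ representing $\vec{x}$ modulo $q$. Since the $f_i$ have integer coefficients, $f_i(q\vec{u}+\vec{y})\equiv f_i(\vec{y})\pmod q$, and so
\[
\expSumSBoxBigAt{\tfrac{\vec{a}}{q}+\vec{\alpha}}
= \sum_{\vec{y}\in\set{1,\dotsc,q}^n} e\brax[\big]{\tfrac{\vec{a}}{q}\cdot\vec{f}(\vec{y})}
\sum_{\substack{\vec{x}\in P\weightbox\cap\bbZ^n\\\vec{x}\equiv\vec{y}\pmod{q}}} e(\vec{\alpha}\cdot\vec{f}(\vec{x})).
\]
It suffices to show the inner sum equals $q^{-n}\int_{P\weightbox}e(\vec{\alpha}\cdot\vec{f}^{[d]}(\vec{x}))\,\mathrm{d}\vec{x}$ up to an error whose sum against the unimodular Gauss factors yields the target $O_{\vec{f}}(qP^{n-1}(1+P^d\supnorm{\vec{\alpha}}))$. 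Stage 1: replace $e(\vec{\alpha}\cdot\vec{f})$ by $e(\vec{\alpha}\cdot\vec{f}^{[d]})$; the pointwise error is $\leq \min\set{2,2\pi|\vec{\alpha}\cdot\vec{f}^{[<d]}(\vec{x})|}\ll_{\vec{f}}\min\set{1,\supnorm{\vec{\alpha}}P^{d-1}}$ on $P\weightbox$, and summing over the $O(P^n)$ lattice points in $P\weightbox$ gives a total error $O_{\vec{f}}(P^n\min\set{1,\supnorm{\vec{\alpha}}P^{d-1}})$, which a case split on the size of $\supnorm{\vec{\alpha}}$ bounds by the target (using $q\geq 1$ in the small-$\supnorm{\vec{\alpha}}$ regime). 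Stage 2: replace the lattice sum by the integral; partitioning $P\weightbox$ into cubes of side $q$, using $|\nabla e(\vec{\alpha}\cdot\vec{f}^{[d]})|\ll_{\vec{f}}\supnorm{\vec{\alpha}}P^{d-1}$ on $P\weightbox$, and noting $O((P/q)^{n-1})$ boundary cubes, the Riemann-sum error per residue class is $O(q^{1-n}\supnorm{\vec{\alpha}}P^{n+d-1}+q^{1-n}P^{n-1})$, totalling $O(qP^{n-1}(1+P^d\supnorm{\vec{\alpha}}))$ after summation over the $q^n$ residues. Finally, the change of variables $\vec{x}=P\vec{t}$ together with homogeneity of $\vec{f}^{[d]}$ gives $\int_{P\weightbox}e(\vec{\alpha}\cdot\vec{f}^{[d]}(\vec{x}))\,\mathrm{d}\vec{x}=P^n\Sloc{\infty}{P^d\vec{\alpha}}$, yielding \eqref{1.eqn:Sloc}.

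For \eqref{1.eqn:integral_over_frakM}, I first observe that for $\Delta<d/3$ (automatically satisfied in the intended application, where one needs $(2R+3)\Delta<1$) and $P$ large, the boxes making up \tmajorarcs are pairwise disjoint: distinct reduced $\vec{a}/q,\vec{a}'/q'$ with $q,q'\leq P^\Delta$ differ in some coordinate by at least $1/(qq')\geq P^{-2\Delta}>2P^{\Delta-d}$. Substituting $\vec{\alpha}=\vec{a}/q+\vec{\beta}$ on each box and applying \eqref{1.eqn:Sloc}, the main term integrates to $\Sloc{q}{\vec{a}}\cdot\int_{\supnorm{\vec{\beta}}\leq P^{\Delta-d}}P^n\Sloc{\infty}{P^d\vec{\beta}}\,\mathrm{d}\vec{\beta}=\Sloc{q}{\vec{a}}\singIntegralBoxIncomplete{P}$, and summing over $(\vec{a},q)$ recovers $\singSeriesIncomplete{P}\singIntegralBoxIncomplete{P}$. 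The error integrand is bounded by $qP^{n-1}(1+P^d\supnorm{\vec{\beta}})\ll qP^{n-1+\Delta}$ on each box, so the contribution per box is $\ll (2P^{\Delta-d})^RqP^{n-1+\Delta}$; summing $\sum_{q\leq P^\Delta}q^{R+1}\ll P^{\Delta(R+2)}$ yields the total error $\ll P^{R(\Delta-d)+n-1+\Delta+\Delta(R+2)}=P^{n-dR-1+(2R+3)\Delta}$, as claimed.

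The main technical obstacle is Stage 1 above: the bound $O_{\vec{f}}(P^n\min\set{1,\supnorm{\vec{\alpha}}P^{d-1}})$ must be dominated by $O(qP^{n-1}(1+P^d\supnorm{\vec{\alpha}}))$ across three regimes ($\supnorm{\vec{\alpha}}\leq P^{-d}$, $P^{-d}<\supnorm{\vec{\alpha}}\leq P^{1-d}$, and $\supnorm{\vec{\alpha}}>P^{1-d}$), with a different term of the target bound dominant in each and the factor $q\geq 1$ providing the needed slack in the smallest regime. The rest is bookkeeping on volumes and arc counts.
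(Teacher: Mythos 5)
Your argument is correct and follows essentially the same route as the paper: split the sum into residue classes modulo $q$, discard the lower-degree part of $\vec{\alpha}\cdot\vec{f}$ at a cost of $O(P^{n+d-1}\supnorm{\vec{\alpha}})$, replace each residue-class lattice sum by $q^{-n}$ times the integral over $P\weightbox$ via a gradient/Riemann-sum estimate, rescale by $\vec{x}=P\vec{t}$, and then integrate \eqref{1.eqn:Sloc} over the (disjoint, for small $\Delta$ and large $P$) major arcs with the volume and arc-count bookkeeping you give. Your treatment of the deduction of \eqref{1.eqn:integral_over_frakM} is in fact more explicit than the paper's, which leaves that step to the reader.
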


\begin{proof}		
	To show \eqref{1.eqn:Sloc} we follow the proof of Lemma~5.1 in Birch~\cite{birchFormsManyVars}.	First observe that $\alphaDotLittleF(\vec{x}) = \alphaDotLeadingPart(\vec{x}) +O(\supnorm{\vec{x}}^{d-1}\supnorm{\vec{\alpha}})$, and so
	\begin{align}
	\expSumSBoxAt{\tfrac{\vec{a}}{q}+\vec{\alpha}}
	&
	=
	\sum_{1 \leq y_1,\dotsc,y_n \leq q} e\brax[\big]{\aOverQDotLittleF(\vec{y})} 
	\sum_{\substack{\vec{x}\in\bbZ^n\\ \vec{x}/P\in\weightbox \\ \vec{x} \equiv \vec{y}\,\text{mod}\,q}} e(\alphaDotLeadingPart(\vec{x}))
	\nonumber
	\\
	&
	\hphantom{={}}
	+O\brax{P^{n+d-1}\supnorm{\vec{\alpha}}}.
	\label{1.eqn:S_near_a_rational}
	\end{align}
	If $\psi$ is any differentiable complex-valued function on $\bbR^n$, then we have
	\begin{equation*}
	\psi(\vec{x})
	=
	q^{-n}\int_{\substack{\vec{u}\in\bbR^n\\ \supnorm{\vec{u}}\leq q/2}}
	\psi(\vec{x}+\vec{u})
	\,\mathrm{d}\vec{u}
	+O_n\brax[\Big]{ q  \max_{\substack{\vec{u}\in\bbR^n\\ \supnorm{\vec{u}}\leq q/2}} \supnorm{\grad_{\vec{u}} \psi(\vec{x}+\vec{u})}  }.
	\end{equation*}
	Setting $\psi(\vec{x}) =  e(\alphaDotLeadingPart(\vec{x})) $, we deduce that
	\begin{align*}
	\sum_{\substack{\vec{x}\in\bbZ^n\\ \vec{x}/P\in\weightbox \\ \vec{x} \equiv \vec{y}\,\text{mod}\,q}} e(\alphaDotLeadingPart(\vec{x}))
	&=
	q^{-n}\int_{\substack{\vec{v}\in\bbR^n\\ \vec{v}/P \in\weightbox}}
	e(\alphaDotLeadingPart(\vec{v}))
	\,\mathrm{d}\vec{v}
	\\
	&\hphantom{{}={}}
	+O(q^{1-n}P^{n+d-1}\supnorm{\vec{\alpha}}+q^{1-n}P^{n-1}),
	\end{align*}
	where the term $q^{1-n}P^{n-1}$ allows for errors in approximating the boundary of the box $\weightbox$. Substituting into \eqref{1.eqn:S_near_a_rational} shows that
	\begin{equation*}
	\expSumSBoxAt{\tfrac{\vec{a}}{q}+\vec{\alpha}}
	=
	\Sloc{q}{\vec{a}}
	\int_{\substack{\vec{v}\in\bbR^n\\ \vec{v}/P \in\weightbox}}
	e(\alphaDotLeadingPart(\vec{v}))
	\,\mathrm{d}\vec{v}
	+O(qP^{n-1}(1+P^d\supnorm{\vec{\alpha}})).
	\end{equation*}
	To complete the proof of \eqref{1.eqn:Sloc} it suffices to set $\vec{u}=P\vec{t}$ and use the definition of $\Sloc{\infty}{\vec{\gamma}}$ from \S\ref*{1.sec:circle_method_notation}. Now \eqref{1.eqn:integral_over_frakM} follows from \eqref{1.eqn:Sloc} by the definition \eqref{1.eqn:def_of_major_arcs} of \tmajorarcs.
\withqed\end{proof}

We remark that in the case when $\vec{a}=\vec{0}$ and $q=1$, the proof of \eqref{1.eqn:Sloc} is valid whether or not the polynomials $f_i$ have integer coefficients. That is, we always have
\begin{equation}\label{1.eqn:Sinfty}
\expSumSBox
=
P^n\Sloc{\infty}{P^d\vec{\alpha}}
+
\Ocv{}{\vec{f}}\brax[]{  P^{n-1}\brax{ 1+P^{d}\supnorm{\vec{\alpha}} }}
\end{equation}
for any $f_i$ with real cofficients. Next we treat the quantity $\singSeriesIncomplete{P}$ from \eqref{1.eqn:integral_over_frakM}.

\begin{lemma}\label{1.lem:frakS}
	Let the polynomials $f_i$ have integer coefficients, let the box $\weightbox$ from \S\ref*{1.sec:notation} be $\clsd{0}{1}^n$, and let  $\Sloc{q}{\vec{a}}$  be as in \S\ref*{1.sec:circle_method_notation}.
	Suppose we are given $\epsilon \geq 0$ and $C\geq 1$, such that for all $\vec{\alpha},\vec{\beta}\in \bbR^R$ and all $P\geq 1$ the bound \eqref{1.eqn:assumed_arcs} holds. Then:
	\begin{enumerate}[(i)]
		\item\label{1.itm:local_moats}
		There is $\epsilon'>0$ such that $\epsilon'  = O_{\cancellation}(\epsilon)$ and
		\begin{equation}
		\label{1.eqn:local_moats}
		\min\setbrax[\big]{\abs{\Sloc{q}{\vec{a}}},\,\abs{\Sloc{q'}{\vec{a}'}}}
		\llcv{C}{C,\vec{f}}
		(q'+q)^\epsilon
		\supnormbig{\tfrac{\vec{a}}{q}-\tfrac{\vec{a}'}{q'}}^{\frac{\cancellation- \epsilon'}{d-1} }
		\end{equation}
		for all $\vec{a}\in \set{1,\dotsc,q}^R$ and $\vec{a}'\in \set{1,\dotsc,q}^R$ such that $\frac{\vec{a}'}{q'}\neq \frac{\vec{a}}{q}$.
		
		\item\label{1.itm:measure_of_localarcs}
		If $\cancellation>\epsilon'$, then for all $\tee>0$ and $q_0\in\bbN$ we have
		\[
		\#\localarcs{q_0}{\tee}
		\llcv{C}{C,\vec{f}}
		(q_0^\epsilon
		\tee)^{-\frac{(d-1)R}{\cancellation-\epsilon'}
		},
		\]
		where it is understood that the fractions $\tfrac{\vec{a}}{q}$ are in lowest terms.
		
		\item\label{1.itm:bound_on_Sq}
		Let  $\delta_0$  be as in \S\ref*{1.sec:circle_method_notation} and let $\epsilon''>0$. For all $q \in \bbN$ and all $\vec{a} \in \bbZ^R$ such that $(a_1,\dotsc,a_R,q)=1$, we have
		\[
		\abs{\Sloc{q}{\vec{a}}}
		\llcv{\epsilon''}{\vec{f},\epsilon''}
		q^{-\delta_0+\epsilon''}.
		\]
		
		\item\label{1.itm:frakS_converges}
		Let  $\Delta$ and $\singSeriesIncomplete{P}$ be as in \S\ref*{1.sec:circle_method_notation}. Suppose that $\epsilon$ is  sufficiently small in terms of $\cancellation$, $d$ and $R$. Provided the inequality $\cancellation>(d-1)R$ holds and the forms $f^{[d]}_i$ are linearly independent, we have
		\begin{equation}\label{1.eqn:frakS_converges}
		\singSeriesIncomplete{P}-\singSeries
		\llcv{C,\cancellation}{C,\cancellation,\vec{f}}
		P^{-\Delta \delta_1}
		\end{equation}
		for some $\singSeries\in\bbC$ and some $\delta_1 >0$ depending at most on $\cancellation$, $d$ and $R$. We have
		\begin{multline}
		\label{1.eqn:evaluating_frakS}
		\singSeries
		= \prod_p
		\lim_{k\to\infty}
		\tfrac{1}{p^{k(n-R)}}
		\#
		\big\{ \vec{b} \in \set{1,2,\dotsc,p^k}^n
		\suchthat
		\\
		f_1(\vec{b})
		\equiv 0 ,
		\dotsc,
		f_R(\vec{b})
		\equiv
		0
		\mod{p^k}
		\big\}
		\end{multline}
		where the product is over primes $p$ and  converges absolutely.
	\end{enumerate}
\end{lemma}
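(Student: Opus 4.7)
The plan is to prove parts (i)--(iv) in order, using Lemma~\ref{1.lem:Sloc} together with the hypothesis \eqref{1.eqn:assumed_arcs} for the analytic estimates and the Weyl machinery of Lemma~\ref{1.lem:weyl's_ineq} for the arithmetic input in (iii).

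For (i), I apply \eqref{1.eqn:Sloc} at the pairs $(\vec{a},q)$ and $(\vec{a}',q')$ with $\vec{\alpha}=\vec{0}$; since $\weightbox=\clsd{0}{1}^n$ we have $\Sloc{\infty}{\vec{0}}=1$, so
\[
\expSumSBoxAt{\vec{a}/q}=P^n\Sloc{q}{\vec{a}}+O(qP^{n-1})
\]
and similarly for the primed pair. Feeding this into \eqref{1.eqn:assumed_arcs} at $\vec{\alpha}=\vec{a}/q$, $\vec{\beta}=\vec{a}'/q'-\vec{a}/q$, dividing through by $P^n$, and choosing $P$ as a suitable power of $q+q'$ times $\supnorm{\vec{\beta}}^{-1/(d-1)}$ so that (a) $\supnorm{\vec{\beta}}^{1/(d-1)}$ dominates in the maximum in \eqref{1.eqn:assumed_arcs} and (b) the error term $(q+q')/P$ is absorbed into $(q+q')^\epsilon\supnorm{\vec{\beta}}^{(\cancellation-\epsilon')/(d-1)}$ for some $\epsilon'=O_\cancellation(\epsilon)$, produces \eqref{1.eqn:local_moats}.

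Part (ii) is then a direct spacing argument: by \eqref{1.eqn:local_moats}, any two distinct fractions $\vec{a}/q,\vec{a}'/q'$ in $\localarcs{q_0}{\tee}$ are separated by $\gg(\tee q_0^{-\epsilon})^{(d-1)/(\cancellation-\epsilon')}$ in $\supnorm{\cdot}$, and a volume-packing bound on the torus $\clsdopen{0}{1}^R$ gives the claimed count. For part (iii), I would mimic the Weyl-differencing proof of Lemma~\ref{1.lem:weyl's_ineq} directly on the complete sum $q^n\Sloc{q}{\vec{a}}$, so that $d-1$ differencing steps reduce the problem to counting zeros of the multilinear form $\gradFMultilinear{\vec{a}}{\vec{x}^{(1)},\dotsc,\vec{x}^{(d-1)}}$ modulo $q$, an estimate controlled by $\sigma_\bbZ$ via the arguments of Dietmann~\cite{dietmannWeylsIneq} and Schindler~\cite{schindlerWeylsIneq}; alternatively one can evaluate $\expSumSBox$ at $\vec{\alpha}=\vec{a}/q$ with $P$ chosen so that $\vec{a}/q\in\minorarcs$ and invert \eqref{1.eqn:Sloc}.

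For (iv), I would estimate the tail $\sum_{q>P^\Delta}\sum_{\vec{a}}|\Sloc{q}{\vec{a}}|$ by a dyadic decomposition in the size $T$ of $|\Sloc|$: part (iii) forces $|\Sloc|\geq T$ to imply $q\ll T^{-1/(\delta_0-\epsilon'')}$, bounding the relevant range of $T$; part (ii) bounds the number of fractions with $|\Sloc|\in[T,2T)$ by essentially $T^{-(d-1)R/(\cancellation-\epsilon')}$. Since $\cancellation>(d-1)R$ by hypothesis, the dyadic sum $\sum_T T\cdot T^{-(d-1)R/(\cancellation-\epsilon')}$ converges geometrically, with tail dominated by the largest admissible $T\approx P^{-\Delta(\delta_0-\epsilon'')}$, which gives \eqref{1.eqn:frakS_converges} for a suitable $\delta_1>0$. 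The Euler product \eqref{1.eqn:evaluating_frakS} then follows from the standard multiplicativity $\Sloc{q_1q_2}{\vec{a}_1q_2+\vec{a}_2q_1}=\Sloc{q_1}{\vec{a}_1}\Sloc{q_2}{\vec{a}_2}$ for coprime $q_1,q_2$ (Chinese Remainder Theorem), together with the character-orthogonality identity $\sum_{k=0}^K\sum_{\vec{a}\bmod p^k,\,(\vec{a},p)=1}\Sloc{p^k}{\vec{a}}=p^{-K(n-R)}\#\{\vec{b}\bmod p^K:f_1(\vec{b})\equiv\cdots\equiv f_R(\vec{b})\equiv 0\bmod p^K\}$, proved by expanding each $\Sloc{p^k}{\vec{a}}$ and using orthogonality of additive characters modulo $p^K$.

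The main obstacle is the exponent-balancing in (iv): the hypothesis $\cancellation>(d-1)R$ is exactly what is needed to make the dyadic geometric series converge, so one must keep all auxiliary quantities $\epsilon,\epsilon',\epsilon''$ small relative to $\cancellation-(d-1)R$. A secondary technicality is the choice of $P$ in (i), where the eventual exponent $\epsilon'$ in \eqref{1.eqn:local_moats} depends delicately on how the $P^\epsilon$ factor in \eqref{1.eqn:assumed_arcs} is traded off against the $(q+q')P^{-1}$ error from \eqref{1.eqn:Sloc}.
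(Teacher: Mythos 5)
Your proposal is correct and follows essentially the same route as the paper: part (i) by combining \eqref{1.eqn:assumed_arcs} with the approximation \eqref{1.eqn:Sloc} and optimising $P$ in terms of $q+q'$ and $\supnorm{\vec{a}/q-\vec{a}'/q'}$, part (ii) by the spacing/packing argument, part (iii) by the standard Birch--Dietmann--Schindler argument, and part (iv) by a dyadic decomposition in the size of $\abs{\Sloc{q}{\vec{a}}}$ combined with parts (ii) and (iii), the condition $\cancellation>(d-1)R$ ensuring geometric convergence, and the usual multiplicativity plus character-orthogonality identity for the Euler product. The only (immaterial) difference is organisational: the paper first splits the tail into dyadic blocks $Q<q\leq 2Q$ and then decomposes each block by the size of $\abs{\Sloc{q}{\vec{a}}}$, whereas you run a single dyadic decomposition in that size and use part (iii) to bound the admissible range of $q$.
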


\begin{proof}[Proof of part~\ref*{1.itm:local_moats}]
	Provided $P$ is sufficiently large, Lemma~\ref*{1.lem:Sloc} will allow us to approximate the sum $\Sloc{q}{\vec{a}}$ by a multiple of $S\brax[\big]{{\vec{a}/q} ;P}$. This will enable us to transform   \eqref{1.eqn:assumed_arcs} into the bound \eqref{1.eqn:local_moats}. Let $P\geq 1$ be a parameter, to be chosen later. Then \eqref{1.eqn:assumed_arcs} gives
	\begin{equation}\label{1.eqn:local_moats_first_step}
	\min\setbrax*{
		\abs*{ \frac{S\brax[\big]{\frac{\vec{a}}{q} ;P}  }{P^{n+\epsilon}} }
		,\,
		\abs*{ \frac{S \brax[\big]{\frac{\vec{a}'}{q'};P} }{P^{n+\epsilon}} }
	}
	\leq
	C
	\max\setbrax{P^{-d}\supnormbig{  \tfrac{\vec{a}'}{q'}-\tfrac{\vec{a}}{q}  }^{-1} ,\, \supnormbig{   \tfrac{\vec{a}'}{q'}-\tfrac{\vec{a}}{q}  }^{\frac{1}{d-1}} }^{\cancellation}.
	\end{equation}
	Since $\weightbox = \clsd{0}{1}^n$ the equality $\Sloc{\infty}{\vec{0}}=1$ holds, and so \eqref{1.eqn:Sloc} implies that
	\begin{equation}\label{1.eqn:Sloc_for_local_moats}
	\frac{ S\brax[\big]{\frac{\vec{a}}{q};P} }{P^{n}}
	=
	\Sloc{q}{ \vec{a} }
	+
	\Ocv{}{\vec{f}}\brax{qP^{-1}},
	\quad
	\frac{ S\brax[\big]{\frac{\vec{a}'}{q'};P} }{P^{n}}
	=
	\Sloc{q'}{ \vec{a}' }
	+
	\Ocv{}{\vec{f}}\brax{q'P^{-1}}.
	\end{equation}
	Together \eqref{1.eqn:local_moats_first_step} and \eqref{1.eqn:Sloc_for_local_moats} yield
	\begin{multline}\label{1.eqn:local_moats_second_step}
	\min\setbrax[\big]{
		\abs{ \Sloc{q}{ \vec{a} }  }
		,\,
		\abs{ \Sloc{q'}{ \vec{a}' } }
	}
	\\ 
	\leq
	C
	P^{\epsilon-\cancellation d}\supnormbig{  \tfrac{\vec{a}'}{q'}-\tfrac{\vec{a}}{q}  }^{-\cancellation} +CP^\epsilon \supnormbig{   \tfrac{\vec{a}'}{q'}-\tfrac{\vec{a}}{q}  }^{\frac{\cancellation}{d-1}}
	+
	\Ocv{}{\vec{f}}\brax[]{ (q'+q)P^{-1}}.
	\end{multline}
	Observe that for $P$ sufficiently large the term $C P^\epsilon  \supnorm{\frac{\vec{a}'}{q'}-\frac{\vec{a}}{q}}^{\cancellation/(d-1)}$ dominates the right-hand side of \eqref{1.eqn:local_moats_second_step}. We claim this is the case for
	\begin{equation}\label{1.eqn:choice_of_P_for_local_moats}
	P
	=
	(q'+q)\supnormbig{   \tfrac{\vec{a}'}{q'}-\tfrac{\vec{a}}{q}  }^{-\frac{1+\cancellation}{d-1}}.
	\end{equation}
	Indeed, since $\supnorm{\frac{\vec{a}'}{q'}-\frac{\vec{a}}{q}}\leq 1$, it follows from \eqref{1.eqn:choice_of_P_for_local_moats} and \eqref{1.eqn:local_moats_second_step} that
	\begin{align*}
	\MoveEqLeft[1]
	\min\setbrax[\big]{
		\abs{ \Sloc{q}{ \vec{a} }  }
		,\,
		\abs{ \Sloc{q'}{ \vec{a}' } }
	}
	\\ 
	&\leq
	C
	P^{\epsilon}
	(q'+q)^{-\cancellation d} \supnormbig{  \tfrac{\vec{a}'}{q'}-\tfrac{\vec{a}}{q}  }^{\frac{\cancellation+\cancellation^2 d}{d-1}} + 	C
	P^{\epsilon}\supnormbig{   \tfrac{\vec{a}'}{q'}-\tfrac{\vec{a}}{q}  }^{\frac{\cancellation}{d-1}} 
	+
	\Ocv{}{\vec{f}}\brax[\Big]{\supnormbig{  \tfrac{\vec{a}'}{q'}-\tfrac{\vec{a}}{q}  }^{\frac{1+\cancellation}{d-1}}}.
	\\
	&\llcv{C}{C,\vec{f}}
	P^\epsilon
	\supnormbig{  \tfrac{\vec{a}'}{q'}-\tfrac{\vec{a}}{q}  }^{\frac{\cancellation}{d-1}},
	\end{align*}
	which proves the result.
\end{proof}
\begin{proof}[Proof of part~\ref*{1.itm:measure_of_localarcs}]
	If $\epsilon'< \cancellation$ is small, then by part~\ref*{1.itm:local_moats}, the points in the set
	\[
	\localarcs{q_0}{\tee}
	\]
	are separated by gaps of size
	\[
	\supnorm{\tfrac{\vec{a}'}{q'}-\tfrac{\vec{a}}{q}}
	\ggcv{C}{C,\vec{f}} (q_0^{-\epsilon} \tee)^{\frac{d-1}{\cancellation-\epsilon'}}.
	\]
	At most $\Ocv{C}{C,\vec{f}}(q_0^{\epsilon} \tee)^{-\frac{(d-1)R}{\cancellation-\epsilon'}}$ such points fit in the box $\clsdopen{0}{1}^R$, proving the claim.
	\end{proof}
\begin{proof}[Proof of part~\ref*{1.itm:bound_on_Sq}]
	This follows from Lemma~\ref*{1.lem:weyl's_ineq} by an argument which is now standard, see the proof of Lemma~5.4 in Birch~\cite{birchFormsManyVars}.
\end{proof}
\begin{proof}[Proof of part~\ref*{1.itm:frakS_converges}]
	In this part of the proof, whenever we write $\vec{a}/q$ it is understood that $\vec{a}\in\bbZ^R$ and $q\in\bbN$ with $(a_1,\dotsc ,a_R,q) =1$. We will show below that
	\begin{equation}\label{1.eqn:tail_of_frakS}
	s(Q)
	=
	\sum_{\substack{\vec{a}/q \in\clsdopen{0}{1}^R \\ Q < q \leq 2Q}}
	\abs{\Sloc{q}{\vec{a}}}
	\llcv{C,\cancellation}{C,\cancellation,\vec{f}}
	Q^{-\delta_1}
	\end{equation}
	for all $Q \geq 1$, and some $\delta_1>0$ depending only on $\cancellation$, $d$ and $R$. Since
	\begin{align*}
	\abs[\bigg]{\singSeriesIncomplete{P}
		-\sum_{ \vec{a}/q\in\clsdopen{0}{1}^R }
		\Sloc{q}{\vec{a}}}
	&\leq
	\sum_{\substack{\vec{a}/q \in\clsdopen{0}{1}^R \\ q> P^\Delta}}
	\abs{\Sloc{q}{\vec{a}}}
	\\
	&=
	\sum_{\substack{ Q = 2^k P^\Delta \\  k = 0, 1, \dotsc}} s(Q),
	\end{align*}
	this proves \eqref{1.eqn:frakS_converges} with
	\begin{equation}
	\label{1.eqn:def_of_frakS}
	\singSeries
	=
	\sum_{ \vec{a}/q\in\clsdopen{0}{1}^R }
	\Sloc{q}{\vec{a}},
	\end{equation}
	where this sum is absolutely convergent. Then  \eqref{1.eqn:evaluating_frakS} follows as in \S 7 of Birch~\cite{birchFormsManyVars}.
	
	We prove \eqref{1.eqn:tail_of_frakS}. Let $\ell \in \bbZ$. We have
	\begin{align}
	s(Q)
	={}&
	\sum_{\substack{  \vec{a}/q\in \clsdopen{0}{1}^R \\ \abs{\Sloc{q}{\vec{a}}}\geq  2^{-\ell} \\  Q < q \leq 2Q }}
	\abs{\Sloc{q}{\vec{a}}}
	+
	\sum_{i = \ell}^\infty
	\sum_{\substack{  \vec{a}/q\in \clsdopen{0}{1}^R \\ 2^{-i}>  \abs{\Sloc{q}{\vec{a}}}\geq 2^{-i-1} \\  Q < q \leq 2Q }}
	\abs{\Sloc{q}{\vec{a}}}
	\nonumber
	\\
	\leq{}&
	\#\localarcs{2Q}{2^{-\ell}} \cdot \sup_{q>Q} \abs{\Sloc{q}{\vec{a}}}
	\nonumber
	\\
	&+
	\sum_{i=\ell}^\infty
	\#\localarcs{2Q}{2^{-i-1}}\cdot 2^{-i}.
	\label{1.eqn:bounding_S(Q)}
	\end{align}
	Now parts~\ref*{1.itm:measure_of_localarcs} and~\ref*{1.itm:bound_on_Sq} show that
	\[
	\#\localarcs{2Q}{\tee} \llcv{C}{C,\vec{f}} (Q^{\epsilon} t)^{-\frac{(d-1)R}{ \cancellation-\epsilon'}}
	\]
	and that
	\[
	\sup_{q>Q} \abs{\Sloc{q}{\vec{a}}} \llcv{}{\vec{f}} Q^{-\delta_0/2}.
	\]
	Substituting these bounds into \eqref{1.eqn:bounding_S(Q)} gives
	\begin{equation*}
	s(Q)
	\llcv{C}{C,\vec{f}}
	Q^{\Ocv{\cancellation}{\cancellation,d,R}(\epsilon)-\delta_0/2}
	2^{\ell \frac{(d-1)R}{\cancellation-\epsilon'}}
	+
	Q^{\Ocv{\cancellation}{\cancellation,d,R}(\epsilon)}
	\sum_{i=\ell}^\infty
	2^{(i+1) \brax[\big]{\frac{(d-1)R}{\cancellation-\epsilon'}}-i}.
	\end{equation*}
	We have $\cancellation>(d-1)R$ and we have assumed that $\epsilon'$ is small in terms of $\cancellation$, $d$ and $R$, so we may assume that the bound $\cancellation>(d-1)R+\epsilon'$ holds. So we may sum the geometric progression to find that 
	\begin{equation*}
	s(Q)	\ll_{C,\cancellation}
	Q^{\Ocv{\cancellation}{\cancellation,d,R}(\epsilon)}
	2^{\ell \frac{(d-1)R}{\cancellation-\epsilon'}}
	\brax[\big]{ 
		Q^{-\delta_0/2} +2^{-\ell} }.
	\end{equation*}
	Picking $
	\ell
	=
	\floor{\log_2 Q^{\delta_0/2}}
	$
	shows that
	\begin{equation*}
	s(Q)
	\llcv{C,\cancellation}{C,\cancellation,d,R}
	Q^{-\delta_0\frac{(d-1)R-\cancellation}{2\cancellation}+\Ocv{\cancellation}{\cancellation,d,R}(\epsilon)}.
	\end{equation*}
	The forms $f^{[d]}_i$ are linearly independent, so $\delta_0\geq \frac{1}{(d-1)2^{d-1}R}$, by Lemma~\ref*{1.lem:Sloc}. As $\epsilon$ is small in terms of $\cancellation$, $d$ and $R$ it follows that $s(Q)\llcv{C,\cancellation}{C,\cancellation,d,R} Q^{-\delta_1}$ for some $\delta_1>0$ depending only on $\cancellation$, $d$ and $R$. This proves \eqref{1.eqn:tail_of_frakS}.
\withqed\end{proof}

We estimate the integral $\singIntegralBoxIncomplete{P}$ from \eqref{1.eqn:integral_over_frakM}.

\begin{lemma}\label{1.lem:frakI}
	Let $\Sloc{\infty}{\vec{\gamma}}$, $\Delta$ and $\singIntegralBoxIncomplete{P}$ be as in \S\ref*{1.sec:circle_method_notation}.
	\begin{enumerate}[(i)]
		\item\label{1.itm:bound_on_Sinfty}
		Suppose that the bound \eqref{1.eqn:assumed_arcs} holds for some $C\geq 1$, $\cancellation>0$ and $\epsilon\geq 0$ and all $\vec{\alpha},\vec{\beta}\in\bbR^R$ and $P\geq 1$. Then for all $\vec{\gamma}\in\bbR^R$ we have
		\begin{equation}\label{1.eqn:bound_on_Sinfty}
		\Sloc{\infty}{\vec{\gamma}}
		\llcv{C}{C,\vec{f}}
		\supnorm{\vec{\gamma}}^{-\cancellation+\epsilon'},
		\end{equation}
		for some $\epsilon'>0$ such that $\epsilon' = O_{\cancellation}(\epsilon)$.
		\item\label{1.itm:frakI_converges}
		If the conclusion of part~\ref*{1.itm:bound_on_Sinfty} holds and $\cancellation-\epsilon'>R$, then there exists $\singIntegralBox \in \bbC$ such that for all $P\geq 1$ we have
		\begin{equation}\label{1.eqn:frakI_converges}
		\tfrac{1}{P^{n-dR}}\singIntegralBoxIncomplete{P}
		-
		\singIntegralBox
		\llcv{\cancellation,C,\epsilon'}{\cancellation,C,\vec{f},\epsilon'}
		P^{-\Delta\brax{\cancellation-\epsilon'-R}}.
		\end{equation}
		Furthermore we have
		\begin{equation}\label{1.eqn:evaluating_frakI}
		\singIntegralBox
		=
		\lim_{P \to \infty}\tfrac{1}{P^{n-dR}}
		\Meas{ \vec{\tee}\in\bbR^n \suchthat \tfrac{1}{P}\vec{\tee} \in \weightbox, \abs{f^{[d]}_1(\vec{\tee})} \leq \tfrac{1}{2} ,\dotsc,
			\abs{f^{[d]}_R(\vec{\tee})} \leq \tfrac{1}{2} 
		}
		\end{equation}
		where $\meas{\,\cdot\,}$ denotes the Lebesgue measure.
	\end{enumerate}
\end{lemma}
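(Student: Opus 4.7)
The plan is to establish part~(i) by combining the near-identity \eqref{1.eqn:Sinfty} (which approximates $\Sloc{\infty}{\vec{\gamma}}$ by $P^{-n}\expSumSBoxAt{P^{-d}\vec{\gamma}}$) with the ``repulsion'' hypothesis \eqref{1.eqn:assumed_arcs} applied at the pair $(\vec{\alpha},\vec{\beta}) = (\vec{0},P^{-d}\vec{\gamma})$. The anchor point $\vec{\alpha}=\vec{0}$ is chosen because $|\expSumSBoxAt{\vec{0}}| \gg P^n$ holds trivially (assuming $\meas{\weightbox} > 0$, else everything is vacuous), so the minimum in \eqref{1.eqn:assumed_arcs} is forced to be the value at $\vec{\beta}$, yielding a pointwise upper bound $|\expSumSBoxAt{P^{-d}\vec{\gamma}}| \leq CP^{n+\epsilon}\max\setbrax{P^{-d}\supnorm{\vec{\beta}}^{-1},\supnorm{\vec{\beta}}^{1/(d-1)}}^{\cancellation}$.

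The next step is to balance $P$ as a power of $\supnorm{\vec{\gamma}}$. For $P \gg \supnorm{\vec{\gamma}}$ the maximum collapses to $\supnorm{\vec{\gamma}}^{-\cancellation}$, while the error term in \eqref{1.eqn:Sinfty} is $O(P^{-1}(1+\supnorm{\vec{\gamma}}))$ and requires $P \gg \supnorm{\vec{\gamma}}^{1+\cancellation}$ to be of matching size. The parasitic factor $P^\epsilon$ from \eqref{1.eqn:assumed_arcs} then contributes $\supnorm{\vec{\gamma}}^{(1+\cancellation)\epsilon}$, which dictates $\epsilon' = (1+\cancellation)\epsilon = O_{\cancellation}(\epsilon)$. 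The regime $\supnorm{\vec{\gamma}} \leq 1$ is handled by the trivial bound $|\Sloc{\infty}{\vec{\gamma}}| \leq \meas{\weightbox} \leq 1$.

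Part~(ii) begins with the substitution $\vec{\gamma} = P^d\vec{\alpha}$ in $\singIntegralBoxIncomplete{P}$, giving
\begin{equation*}
\tfrac{1}{P^{n-dR}}\singIntegralBoxIncomplete{P}
=
\int_{\supnorm{\vec{\gamma}}\leq P^\Delta} \Sloc{\infty}{\vec{\gamma}}\,\mathrm{d}\vec{\gamma}.
\end{equation*}
The bound from part~(i) together with $\cancellation-\epsilon' > R$ then gives absolute convergence of $\int_{\bbR^R}\Sloc{\infty}{\vec{\gamma}}\,\mathrm{d}\vec{\gamma}$ via spherical-shell integration, with the tail beyond $\supnorm{\vec{\gamma}}>P^\Delta$ of size $\ll P^{-\Delta(\cancellation-\epsilon'-R)}$; declaring this value to be $\singIntegralBox$ establishes \eqref{1.eqn:frakI_converges}.

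The identity \eqref{1.eqn:evaluating_frakI} is the step I expect to be the main obstacle. Writing the measure as $\int_{\vec{t}/P\in\weightbox}\prod_i \mathbf{1}\sqbrax{|f^{[d]}_i(\vec{t})|\leq 1/2}\,\mathrm{d}\vec{t}$, expanding each indicator by Fourier inversion, and rescaling $\vec{t}=P\vec{u}$ formally rewrites the left-hand side as $P^{n-dR}\int_{\bbR^R}\hat{\phi}(\vec{\gamma}/P^d)\,\Sloc{\infty}{\vec{\gamma}}\,\mathrm{d}\vec{\gamma}$, where $\hat{\phi}$ is a product of $\mathrm{sinc}$ functions. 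The technical issue is that $\hat{\phi} \notin L^1$, so the Fourier manipulation is not literally valid. The natural fix is to approximate the indicator by smooth cutoffs (for instance a Fej\'er-type kernel) whose Fourier transforms are integrable, carry out the computation rigorously at each stage of the approximation, and pass to the limit using part~(i) together with dominated convergence to remove the cutoff. The mechanics follow \S5 of Birch~\cite{birchFormsManyVars}.
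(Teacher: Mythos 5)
Your proposal follows essentially the same route as the paper's proof: part~(i) by anchoring \eqref{1.eqn:assumed_arcs} at $\vec{\alpha}=\vec{0}$, combining with \eqref{1.eqn:Sinfty}, and choosing $P=\max\set{1,\supnorm{\vec{\gamma}}^{1+\cancellation}}$ (whence $\epsilon'=(1+\cancellation)\epsilon$); the bound \eqref{1.eqn:frakI_converges} by the substitution $\vec{\gamma}=P^d\vec{\alpha}$ and the tail estimate over $\supnorm{\vec{\gamma}}>P^\Delta$; and \eqref{1.eqn:evaluating_frakI} by smoothing the indicator and applying Fourier inversion to the smooth approximants, exactly as in the paper. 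The one point to tighten is the final limit interchange: ``dominated convergence to remove the cutoff'' does not by itself justify swapping the $P\to\infty$ limit with the smoothing limit, and the paper instead uses a one-sided sandwich (smooth $\varphi\leq\chi$ to bound the $\liminf$ from below, smooth $\varphi\geq\chi$ to bound the $\limsup$ from above, with $\hat{\varphi}(\vec{0})\to 1$ in each case), which is the standard completion of the argument you outline.
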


\begin{proof}[Proof of part ~\ref*{1.itm:bound_on_Sinfty}]
	First, for all $\vec{\beta}\in\bbR^R$ we have $\abs{\expSumSBoxAt{\vec{\beta}}}\leq \expSumSBoxAt{\vec{0}}$, from the definition \eqref{1.eqn:def_of_S}. Consequently, taking $\vec{\alpha} = \vec{0}$, $\vec{\beta}= P^{-d}\vec{\gamma} $ in our hypothesis \eqref{1.eqn:assumed_arcs} shows that
	\begin{equation*}
	\abs{ \expSumSBoxAt{P^{-d}\vec{\gamma}}}
	\leq
	C
	P^{n+\epsilon}
	\max\setbrax{\supnorm{\vec{\gamma}}^{-1} ,\, P^{-\frac{d}{d-1}}\supnorm{\vec{\gamma}}^{\frac{1}{d-1}} }^{\cancellation}.
	\end{equation*}
	Together with the case $\vec{\alpha}= P^{-d}\vec{\gamma} $ of the bound \eqref{1.eqn:Sinfty}, this yields
	\begin{equation}\label{1.eqn:bound_on_Sinfty_II}
	\Sloc{\infty}{\vec{\gamma}}
	\llcv{C}{C,\vec{f}}
	P^{\epsilon} \max\setbrax{\supnorm{\vec{\gamma}}^{-1},\,
		P^{-\frac{d}{d-1} } \supnorm{\vec{\gamma}}^{\frac{1}{d-1}}}^\cancellation
	+
	P^{-1}
	+
	P^{-1}\supnorm{\vec{\gamma}}.
	\end{equation}
	If  we have $\supnorm{\vec{\gamma}} \leq 1$, then we set $P=1$ and~\ref*{1.itm:bound_on_Sinfty} follows at once. Otherwise we put $P = \max\setbrax{1, \supnorm{\vec{\gamma}}^{1+\cancellation}}$, and the result follows since \eqref{1.eqn:bound_on_Sinfty_II} then implies
	\begin{align*}
	\Sloc{\infty}{\vec{\gamma}}
	&\llcv{C}{C,\vec{f}}
	P^{\epsilon} \max\setbrax[\big]{\supnorm{\vec{\gamma}}^{-1},\,
		\supnorm{\vec{\gamma}}^{-1-\frac{\cancellation d}{d-1}}}^\cancellation
	+\supnorm{\vec{\gamma}}^{-1-\cancellation}
	+\supnorm{\vec{\gamma}}^{-\cancellation}
	\nonumber
	\\
	&\leq
	3 \supnorm{\vec{\gamma}}^{-\cancellation+(1+\cancellation)\epsilon}.
	\end{align*}
\end{proof}
\begin{proof}[Proof of part~\ref*{1.itm:frakI_converges}]	
	If the inequality  $\cancellation-\epsilon'>R $ holds, then by \eqref{1.eqn:bound_on_Sinfty} we have
	\begin{align*}
	\brax[\bigg]{\int\limits_{\vec{\gamma}\in\bbR^R } P^{n-dR} \Sloc{\infty}{\vec{\gamma}}  \,\mathrm{d}\vec{\gamma}}
	-
	\singIntegralBoxIncomplete{P}
	&=
	\int\limits_{\substack{\vec{\gamma}\in\bbR^R \\ \supnorm{\vec{\gamma}}> P^{\Delta} }} P^{n-dR} \Sloc{\infty}{\vec{\gamma}}  \,\mathrm{d}\vec{\gamma}
	\\
	&\llcv{\cancellation,C,\epsilon'}{\cancellation,C,\vec{f},\epsilon'} P^{n-dR-\Delta\brax{\cancellation-\epsilon'-R}} ,
	\end{align*}
	where the integrals converge absolutely. 
	This proves \eqref{1.eqn:frakI_converges} with
	\begin{equation}\label{1.eqn:def_of_frakI}
	\singIntegralBox
	=
	\int\limits_{\vec{\gamma}\in\bbR^R } \Sloc{\infty}{\vec{\gamma}}  \,\mathrm{d}\vec{\gamma}.
	\end{equation}
	It remains to prove \eqref{1.eqn:evaluating_frakI}. Let $\chi : \bbR^R \to \clsd{0}{1}$ be the indicator function of the box $\clsd{-\frac{1}{2}}{\frac{1}{2}}^R$. We must evaluate the limit
	\begin{multline}\label{1.eqn:sing_int_as_volume_integral}
	\lim_{P \to \infty}
	\tfrac{1}{P^{n-dR}}
	\Meas{ \vec{\tee}\in\bbR^n \suchthat \tfrac{1}{P}\vec{\tee} \in \weightbox, \abs{f^{[d]}_1(\vec{\tee})} \leq \tfrac{1}{2} ,\dotsc,
		\abs{f^{[d]}_R(\vec{\tee})} \leq \tfrac{1}{2} }
	\\
	=
	\lim_{P\to\infty}
	\tfrac{1}{P^{n-dR}}
	\int_{\substack{ \vec{\tee}\in\bbR^n \\ \vec{\tee}/P \in\weightbox }}
	\chi\brax[\big]{ f^{[d]}_1(\vec{\tee}) ,\dotsc,
		f^{[d]}_R(\vec{\tee})  } \,\mathrm{d}\vec{\tee}.
	\end{multline}
	Let  $\varphi$ be any infinitely differentiable, compactly supported function on $\bbR^R$, taking values in $\clsd{0}{1}$. We evaluate $\frac{1}{P^{n-dR}}\int_{\vec{t}/P\in\weightbox} \varphi(  f^{[d]}_1(\vec{\tee}) ,\dotsc,
	f^{[d]}_R(\vec{\tee})  )\,\mathrm{d}\vec{t}$,  which we think of as a smoothed version of \eqref{1.eqn:sing_int_as_volume_integral}. Fourier inversion gives
	\begin{align}
	\int_{\substack{ \vec{\tee}\in\bbR^n \\ \vec{\tee}/P \in\weightbox }}
	\varphi\brax[\big]{ f^{[d]}_1(\vec{\tee}) ,\dotsc,
		f^{[d]}_R(\vec{\tee}) } \,\mathrm{d}\vec{\tee}
	&=
	\int_{\substack{ \vec{\tee}\in\bbR^n \\ \vec{\tee}/P \in\weightbox }}
	\int_{\bbR^R}
	\hat{\varphi}(\vec{\alpha})
	e\brax{ \alphaDotLeadingPart(\vec{\tee}) } \,\mathrm{d}\vec{\alpha} d\vec{\tee}
	\nonumber
	\\
	&=
	\int_{\bbR^R}
	\hat{\varphi}(\vec{\alpha})
	\int_{\substack{ \vec{\tee}\in\bbR^n \\ \vec{\tee}/P \in\weightbox }}
	e\brax{ \alphaDotLeadingPart(\vec{\tee}) } \,\mathrm{d}\vec{\tee} d\vec{\alpha}
	\nonumber
	\\
	&=
	\int_{\bbR^R}
	\hat{\varphi}(\vec{\alpha})
	P^n \Sloc{\infty}{P^d\vec{\alpha}} d\vec{\alpha}
	\label{1.eqn:fourier_transform_frakI-sub-varphi}
	\end{align}
	where $\hat{\varphi}(\vec{\alpha})$ is the Fourier transform $ \int_{\bbR^R} \varphi(\vec{\gamma}) e\brax{- \vec{\alpha}\cdot\vec{\gamma} } \,\mathrm{d}\vec{\gamma}$. 
	
	Since $ \cancellation-\epsilon' >R$ holds by assumption, it follows from \eqref{1.eqn:bound_on_Sinfty} that the function $S_\infty$ is Lebesgue integrable. Hence \eqref{1.eqn:def_of_frakI} implies
	\begin{align}
	\hat{\varphi}(\vec{0}) \singIntegralBox
	\nonumber
	&=
	\int_{\bbR^R}
	\hat{\varphi}(\vec{0})
	\Sloc{\infty}{ \vec{\gamma}} \,\mathrm{d}\vec{\gamma}
	\nonumber
	\\
	&=
	\lim_{P\to\infty}
	\int_{\bbR^R}
	\hat{\varphi}(P^{-d}\vec{\gamma})
	\Sloc{\infty}{ \vec{\gamma}} \,\mathrm{d}\vec{\gamma}
	\nonumber
	\\
	&=
	\lim_{P\to\infty}
	P^{dR}
	\int_{\bbR^R}
	\hat{\varphi}(\vec{\alpha})
	\Sloc{\infty}{P^d \vec{\alpha}} \,\mathrm{d}\vec{\alpha}.
	\label{1.eqn:dominated_convergence_trick}
	\end{align}
	Together \eqref{1.eqn:fourier_transform_frakI-sub-varphi} and \eqref{1.eqn:dominated_convergence_trick} show that for any infinitely differentiable, compactly supported $\varphi$ taking values in $\clsd{0}{1}$, we have
	\begin{equation}\label{1.eqn:evaluation_of_frakI-sub-varphi}
	\lim_{P\to\infty}
	\tfrac{1}{P^{n-dR}}
	\int_{\substack{ \vec{\tee}\in\bbR^n \\ \vec{\tee}/P \in\weightbox }}
	\varphi\brax{  f^{[d]}_1(\vec{\tee}) ,\dotsc,
		f^{[d]}_R(\vec{\tee}) } \,\mathrm{d}\vec{\tee}
	=
	\hat{\varphi}(\vec{0}) \singIntegralBox.
	\end{equation}
	With $\chi$ as in \eqref{1.eqn:sing_int_as_volume_integral}, choose $\varphi$ such that $\varphi(\vec{\gamma})\leq \chi(\vec{\gamma})$ for all $\vec{\gamma}\in \bbR^R$. Then by \eqref{1.eqn:sing_int_as_volume_integral} and \eqref{1.eqn:evaluation_of_frakI-sub-varphi} we have
	\begin{equation*}
	\liminf_{P \to \infty}\tfrac{1}{P^{n-dR}}
	\Meas{ \vec{\tee}\in\bbR^n \suchthat \tfrac{1}{P}\vec{\tee} \in \weightbox, \abs{f^{[d]}_1(\vec{\tee})} \leq \tfrac{1}{2} ,\dotsc,
		\abs{f^{[d]}_R(\vec{\tee})} \leq \tfrac{1}{2}  }
	\geq
	\hat{\varphi}(\vec{0})
	\singIntegralBox.
	\end{equation*}
	Letting $\varphi \to \chi$ almost everywhere gives $
	\hat{\varphi}(\vec{0}) \to 1$, so $\singIntegralBox$ is a lower bound for the limit inferior in \eqref{1.eqn:sing_int_as_volume_integral}. Repeating the argument with $\varphi(\vec{\gamma})\geq \chi(\vec{\gamma})$ instead of $\varphi(\vec{\gamma})\leq \chi(\vec{\gamma})$ shows that $\singIntegralBox$ is also an upper bound for the corresponding limit superior, so the limit exists and is equal to $\singIntegralBox$.
\withqed\end{proof}

\subsection{The proof of Proposition~\ref*{1.prop:circle_method}}\label{1.sec:completing_the_circle_method}

In this section we deduce Proposition~\ref*{1.prop:circle_method} from Lemmas~\ref*{1.lem:mean_value_from_power-law_moats},~\ref*{1.lem:weyl's_ineq},~\ref*{1.lem:Sloc},~\ref*{1.lem:frakS}, and ~\ref*{1.lem:frakI}.

\begin{proof}[Proof of Proposition~\ref*{1.prop:circle_method}]
	Let $P\geq 1$ and $\Delta =\frac{1}{4R+6}$. By \eqref{1.eqn:circle_method} we have
	\begin{equation*}
	\numSolnsInBox(P)
	=
	\int_{\majorarcs} \expSumSBox\,\mathrm{d}\vec{\alpha}
	+\int_{\minorarcs} \expSumSBox\,\mathrm{d}\vec{\alpha},
	\end{equation*}
	where $\majorarcs$, $\minorarcs$ are as in \S\ref*{1.sec:circle_method_notation}. We apply Lemma~\ref*{1.lem:mean_value_from_power-law_moats} with
	\[
	T(\vec{\alpha}) = C^{-1}P^{-\epsilon}\expSumSBox
	,\quad
	E_0 = \clsd{0}{1}^R,\quad
	E = \minorarcs,\quad
	\delta=\Delta\delta_0.
	\]
	With these choices for $T$, $E_0$, $E$ and $\delta$ we see that \eqref{1.eqn:power-law_moats} follows from \eqref{1.eqn:assumed_arcs}. Lemma~\ref*{1.lem:weyl's_ineq} shows that $\sup_{\vec{\alpha}\in\minorarcs}CT(\vec{\alpha}) \ll_\epsilon P^{n-\delta}$, and after increasing $C$ if necessary this gives us \eqref{1.eqn:weyl's_ineq_mean_value_lem}. This verifies the hypotheses of Lemma~\ref*{1.lem:mean_value_from_power-law_moats}. Since we have $\cancellation>dR$ by assumption, \eqref{1.eqn:mean_value_lem} gives
	\begin{equation}\label{1.eqn:mean_value_on_minor_arcs}
	\int_{\minorarcs}\expSumSBox\,\mathrm{d}\vec{\alpha}
	\llcv{C,\cancellation}{C,\cancellation,d,R}
	P^{n-dR-\Delta\delta_0\brax{1-\frac{dR}{\cancellation}}+\epsilon}.
	\end{equation}
	For the major arcs, since $\Delta=\frac{1}{4R+6}$ we have by Lemma~\ref*{1.lem:Sloc} that 
	\begin{equation}\label{1.eqn:integral_over_frakM_application}
	\int_{\majorarcs} \expSumSBox \,\mathrm{d}\vec{\alpha}
	=
	\singSeriesIncomplete{P}\singIntegralBoxIncomplete{P}
	+ \Ocv{}{\vec{f}} \brax[\big]{ P^{n-dR-\frac{1}{2}}},
	\end{equation}
	where  $\singSeriesIncomplete{P}$, $\singIntegralBoxIncomplete{P}$ are as in \S\ref*{1.sec:circle_method_notation}. Since $\cancellation>dR$ holds, the $f_i(\vec{x})$ are linearly independent, and $\epsilon$ is small in terms of $\cancellation$, $d$ and $R$, both of Lemmas~\ref*{1.lem:frakS} and~\ref*{1.lem:frakI} apply. In particular \eqref{1.eqn:frakS_converges} and \eqref{1.eqn:frakI_converges} shows that
	\begin{equation}\label{1.eqn:product_of_densities}
	\singSeriesIncomplete{P}\singIntegralBoxIncomplete{P}
	=
	\singSeries\singIntegralBox P^{n-dR}
	+\Ocv{\cancellation,C}{\cancellation,C,\vec{f}}
	\brax[\big]{
		P^{n-dR-\Delta(\cancellation-R)/2}
	}
	+\Ocv{\cancellation,C}{\cancellation,C,\vec{f}}
	\brax[\big]{
		P^{n-dR-\Delta\delta_1}
	}
	\end{equation}
	where $\delta_1 >0$ depends at most on $\cancellation$, $d$ and $R$. By \eqref{1.eqn:mean_value_on_minor_arcs}, \eqref{1.eqn:integral_over_frakM_application}, and \eqref{1.eqn:product_of_densities}, the result holds.
\withqed\end{proof}

\section{The auxiliary inequality}\label{1.sec:aux_ineq}

In this section we verify the hypothesis \eqref{1.eqn:assumed_arcs}, assuming a bound on the number of solutions to the auxiliary inequality from Definition~\ref*{1.def:aux_ineq}. The goal is the following result, proved at the end of \S\ref*{1.sec:proof_of_moat_lemma}.

\begin{proposition}\label{1.prop:moat_lemma}
	Let $\auxIneqOfSomethingNumSolns{f}\brax{ B }$, $\supnorm{f}$  be as in Definition~\ref*{1.def:aux_ineq}. Suppose that we are given $C_0\geq 1$ and $\cancellation>0$ such that for all $\vec{\beta}\in\bbR^R$ and $B\geq 1$ we have
	\begin{equation}\label{1.eqn:aux_ineq_bound_moat_lemma}
	\auxIneqNumSolns \brax{ B }
	\leq
	C_0
	B^{(d-1)n-2^{d}\cancellation}.
	\end{equation}
	Further let $M > \mu >0$ such that for all $\vec{\beta}\in\bbR^R$ we have
	\begin{equation}\label{1.eqn:norm_of_|b.f|}
	\mu \supnorm{ \vec{\beta} }
	\leq
	\supnorm{ \betaDotLeadingPart }
	\leq
	M \supnorm{ \vec{\beta} },
	\end{equation}
	noting that some such $M, \mu$ exist whenever the forms $f^{[d]}_i$ are linearly independent. Let  $\epsilon>0$. Then there exists $C\geq 1$, depending only on $C_0,d,n,\mu,M$ and $\epsilon$, such that the bound \eqref{1.eqn:assumed_arcs} holds for all $P\geq 1 $ and all $\vec{\alpha},\vec{\beta}\in\bbR^R$.
\end{proposition}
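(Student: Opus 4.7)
The plan is to combine Weyl differencing with the ``repulsion'' philosophy of \S\ref{1.sec:moat_lemmas}. I would first raise the minimum to the $2^d$-th power via $\min\set{a,b}^{2^d}\leq a^{2^{d-1}} b^{2^{d-1}}$, and then apply $d-1$ rounds of standard Weyl differencing to each of $\abs{\expSumSBoxAt{\vec\alpha}}^{2^{d-1}}$ and $\abs{\expSumSBoxAt{\vec\alpha+\vec\beta}}^{2^{d-1}}$. Since $\vec\alpha\cdot\vec f$ has degree $d$, the $(d-1)$-fold iterated difference $\Delta_{\vec h_1}\dotsm \Delta_{\vec h_{d-1}}(\vec\alpha\cdot\vec f)(\vec x)$ is linear in $\vec x$, with coefficient $d!\,\vec x\cdot\gradSomethingMultilinear{\vec\alpha\cdot\vec f}{\vec h_1,\dotsc,\vec h_{d-1}}$ plus an $\vec x$-independent remainder (the parts of $\vec f$ of degree $<d$ vanish after $d-1$ differencings, while the $(d-1)$-linear form itself depends only on $\vec f^{[d]}$). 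Summing the inner linear exponential in $\vec x$ gives the familiar estimate
\[
\abs{\expSumSBoxAt{\vec\gamma}}^{2^{d-1}}
\ll
P^{n(2^{d-1}-1)}
\sum_{\supnorm{\vec h_j}\leq P}
\prod_{i=1}^n
\min\brax[\Big]{P,\; \normSingleBar{d!\,m^{(\vec\gamma\cdot\vec f)}_i(\vec h_1,\dotsc,\vec h_{d-1})}_{\bbR/\bbZ}^{-1}}
\]
valid for every $\vec\gamma\in\bbR^R$.

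Next I would pass from this oscillatory inequality to a count of integer tuples via the standard dyadic/pigeonhole device (\emph{cf.}\ the derivation of Weyl's inequality in Schindler~\cite{schindlerWeylsIneq}): if $\abs{\expSumSBoxAt{\vec\gamma}}\geq CP^{n+\epsilon}\Lambda$, where $\Lambda$ denotes the right-hand side of \eqref{1.eqn:assumed_arcs} divided by $C$, then (absorbing the $P^\epsilon$ harmlessly) at least $\gg P^{(d-1)n}\Lambda^{2^{d-1}}$ tuples $(\vec h_1,\dotsc,\vec h_{d-1})$ with $\supnorm{\vec h_j}\leq P$ satisfy $\normSingleBar{d!\,m^{(\vec\gamma\cdot\vec f)}_i(\vec h)}_{\bbR/\bbZ}\ll P^{-1}$ for every $i$. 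Applying this both to $\vec\gamma=\vec\alpha$ and to $\vec\gamma=\vec\alpha+\vec\beta$, and intersecting the two sets via Cauchy--Schwarz, I obtain $\gg P^{(d-1)n}\Lambda^{2^d}$ common tuples on which \emph{both} gradients are $O(P^{-1})$-close to $\bbZ^n$; since $\vec m^{(\cdot)}$ is linear in its polynomial argument, subtracting yields the same number of $\vec h$ with $\normSingleBar{d!\,m^{(\vec\beta\cdot\vec f)}_i(\vec h)}_{\bbR/\bbZ}\ll P^{-1}$ for all $i$.

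The final step is to convert this nearby-integer count into the honest count $\auxIneqOfSomethingNumSolns{\vec\beta\cdot\vec F}(B)$ of Definition~\ref{1.def:aux_ineq}, by a suitable choice of dilation scale $B$ and sub-box. Two regimes appear. When $\supnorm{\vec\beta}\leq P^{1-d}$, I take $B\asymp P$: the hypothesis \eqref{1.eqn:norm_of_|b.f|} then forces $\supnorm{\vec\beta\cdot\vec f^{[d]}} B^{d-2}\gg P^{-1}$, so every nearby-integer $\vec h$ is in fact a bona fide solution of \eqref{1.eqn:aux_ineq}; this regime produces the $P^{-d}\supnorm{\vec\beta}^{-1}$ term in \eqref{1.eqn:assumed_arcs}. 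When $\supnorm{\vec\beta}>P^{1-d}$, I instead shrink to $B\asymp \supnorm{\vec\beta}^{-1/(d-1)}$: a standard box-packing argument shows that $\gg (B/P)^{(d-1)n}\cdot P^{(d-1)n}\Lambda^{2^d}=B^{(d-1)n}\Lambda^{2^d}$ tuples $\vec h$ with $\supnorm{\vec h_j}\leq B$ survive, and for this narrower $B$ the tolerance $P^{-1}$ is again dominated by $\supnorm{\vec\beta\cdot\vec f^{[d]}} B^{d-2}$. In both regimes \eqref{1.eqn:aux_ineq_bound_moat_lemma} gives $B^{(d-1)n}\Lambda^{2^d}\ll C_0 B^{(d-1)n-2^d\cancellation}$, so $\Lambda\ll B^{-\cancellation}=\max\setbrax{P^{-d}\supnorm{\vec\beta}^{-1},\supnorm{\vec\beta}^{1/(d-1)}}^{\cancellation}$, which is exactly the content of \eqref{1.eqn:assumed_arcs}.

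The main obstacle will be the bookkeeping in this last step: tracking the $\supnorm{f^{[d]}} B^{d-2}$ scaling and the $\bbR/\bbZ$-tolerance carefully through the dilation, and verifying that the two regimes glue smoothly at the transition $\supnorm{\vec\beta}\sim P^{1-d}$, so that the maximum in \eqref{1.eqn:assumed_arcs} is the correct upper bound on both sides of the crossover. The factor $2^d$ in \eqref{1.eqn:aux_ineq_bound_moat_lemma} is exactly calibrated: raising the minimum to the $2^d$-th power and then taking $2^d$-th roots converts a $2^d\cancellation$-power saving in the auxiliary count into the $\cancellation$-power saving required by \eqref{1.eqn:assumed_arcs}.
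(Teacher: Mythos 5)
Your overall strategy (Weyl differencing, feeding the result into the hypothesis on $\auxIneqNumSolns$, and splitting into the two regimes that produce the two terms of the maximum in \eqref{1.eqn:assumed_arcs}) is the right one, but two of your steps contain genuine gaps, and at both points the paper does something structurally different. First, you difference $\expSumSBoxAt{\vec\alpha}$ and $\expSumSBoxAt{\vec\alpha+\vec\beta}$ \emph{separately} and then claim to intersect the two families of good tuples ``via Cauchy--Schwarz'' to get $\gg P^{(d-1)n}\Lambda^{2^d}$ common tuples. This step fails: if the two families have densities $\delta_1,\delta_2$ in the set of all $(d-1)$-tuples, their intersection is only guaranteed to have density $\delta_1+\delta_2-1$, which is vacuous precisely in the interesting range where $\Lambda$ is small; there is no Cauchy--Schwarz inequality giving a product-type lower bound $\delta_1\delta_2$ for an intersection. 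The paper's Lemma~\ref{1.lem:weyl_diff_ineq} avoids this by the Bentkus--G\"otze device: one differencing step applied to the \emph{product} $\expSumSBoxAt{\vec\alpha+\vec\beta}\,\widebar{\expSumSBox}$ eliminates $\vec\alpha$ entirely (since $(\vec\alpha+\vec\beta)\cdot\vec f(\vec x)-\vec\alpha\cdot\vec f(\vec x+\vec z)$ has leading part $\betaDotLeadingPart(\vec x)$), and Birch's Lemma~2.4 is then applied to the single resulting inner sum. That is the only known way to make the $2^d$-th power of the minimum control a count depending on $\vec\beta$ alone.

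Second, your final step attempts to show that (a positive proportion of) the nearby-integer tuples are honest solutions of \eqref{1.eqn:aux_ineq}, which requires both that the nearest integer vector $\vec v$ is $\vec 0$ and a shrinking argument to pass from boxes of side $P$ to side $B$; neither is justified (and in your first regime the inequality runs the wrong way: $\supnorm{\vec\beta}\leq P^{1-d}$ gives $\supnorm{\betaDotLeadingPart}B^{d-2}\ll P^{-1}$, not $\gg P^{-1}$, so the tolerance is \emph{not} dominated by the threshold there). The paper sidesteps this conversion entirely with a dichotomy: it compares $\weylDiffIneqNumSolns(P^\theta,P^{(d-1)\theta-d})$ with $\auxIneqNumSolns(P^\theta)$ at the \emph{same} scale $P^\theta$, chosen by \eqref{1.eqn:choice_of_theta} so that the hypothesis \eqref{1.eqn:aux_ineq_bound_moat_lemma} together with the Weyl lower bound forces $\auxIneqNumSolns(P^\theta)<\weylDiffIneqNumSolns(P^\theta,P^{(d-1)\theta-d})$. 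Then a \emph{single} tuple counted by $\weylDiffIneqNumSolns$ but not by $\auxIneqNumSolns$ yields, via \eqref{1.eqn:||f(xhat)||<stuff} and \eqref{1.eqn:|f(xhat)|>alpha}, the alternative $\supnorm{\betaDotLeadingPart}<P^{\theta-d}$ or $\supnorm{\betaDotLeadingPart}\gg P^{-(d-1)\theta}$, which is exactly \eqref{1.eqn:bound_on_theta_P} and hence \eqref{1.eqn:assumed_arcs}. You should restructure your argument around these two devices; as written, neither the intersection step nor the conversion step can be completed.
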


\subsection{Weyl differencing}\label{1.sec:weyl_diff}

We prove \eqref{1.eqn:assumed_arcs} using the following estimate, which combines work of Birch~\cite[Lemma~2.4]{birchFormsManyVars} and Bentkus and G\"otze~\cite[Theorem~5.1]{bentkusGotzeEllipsoids}.

\begin{definition}\label{1.def:weyl_diff_ineq}
	Let $f$, $\gradSomethingMultilinear{f}{\vec{x}^{(1)}, \dotsc, \vec{x}^{(d-1)}}$ be as in  Definition~\ref*{1.def:aux_ineq}. Given  $B \geq 1$ and $\delta>0$, we let  $\weylDiffIneqOfSomethingNumSolns{f} \brax{B,\delta}$ be the number of $(d-1)$-tuples of integer $n$-vectors $\vec{x}^{(1)},\dotsc,\vec{x}^{(d-1)}$ such that
	\begin{equation*}
	\supnorm{\vec{x}^{(1)}},\dotsc,\supnorm{\vec{x}^{(d-1)}} \leq B,
	\qquad
	\min_{\vec{v}\in\bbZ^n} \supnormbig{\vec{v}-\gradSomethingMultilinear{ f }{  \vec{x}^{(1)}, \dotsc, \vec{x}^{(d-1)}}} < \delta.
	\end{equation*}
\end{definition}

\begin{lemma}\label{1.lem:weyl_diff_ineq}
	Let $\weylDiffIneqOfSomethingNumSolns{f} \brax{ B,\delta  }$ be as in Definition~\ref*{1.def:weyl_diff_ineq}. For all $\epsilon>0$, $\vec{\alpha}, \vec{\beta}\in\bbR^R$ and $\theta \in \openclsd{0}{1}$,  we have
	\begin{equation}
	\label{1.eqn:from_min_to_prod}
	\min\setbrax*{ \abs*{\frac{\expSumSBox}{P^{n+\epsilon}}} , \,  \abs*{\frac{\expSumSBoxAt{\vec{\alpha}+\vec{\beta}}}{P^{n+\epsilon}}} }^{2^d}
	\ll_{d,n,\epsilon}
	\frac{
		\weylDiffIneqNumSolns \brax{ P^\theta, P^{(d-1)\theta-d} }
	}{P^{(d-1)\theta n}}
	\end{equation}
	where the implicit constant depends only on $d,n,\epsilon$.
\end{lemma}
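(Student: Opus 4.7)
The plan is to combine an elementary minimum-to-product inequality (in the spirit of Bentkus-G\"otze) with Birch's Weyl-differencing, after a change of variables that isolates all the $\vec\alpha$-dependence in a sub-leading polynomial piece. First I would use $\min\{a,b\}^{2^d}\leq(ab)^{2^{d-1}}$ to reduce the problem to a bound on $\abs{S(\vec\alpha;P)\overline{S(\vec\alpha+\vec\beta;P)}}^{2^{d-1}}$. Expanding the product and substituting $\vec x=\vec y+\vec z$ rewrites this as $\sum_{\vec z}W(\vec z)$, where
\[
W(\vec z)
=
\sum_{\vec y} e\brax[\big]{\vec\alpha\cdot(\vec f(\vec y+\vec z)-\vec f(\vec y))-\vec\beta\cdot\vec f(\vec y)},
\]
the ranges of $\vec y,\vec z$ being dictated by $\vec y,\vec y+\vec z\in P\weightbox$. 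The polynomial $\vec f(\vec y+\vec z)-\vec f(\vec y)$ has degree only $d-1$ in $\vec y$ (its degree-$d$ parts cancel), so the exponent defining $W(\vec z)$ is a polynomial of degree $d$ in $\vec y$ whose leading $d$-form is $-\vec\beta\cdot\vec f^{[d]}(\vec y)$, depending on neither $\vec\alpha$ nor $\vec z$. Bounding $\abs{\sum_{\vec z}W(\vec z)}$ by $O(P^n)$ times $\sup_{\vec z}\abs{W(\vec z)}$ then reduces the task to bounding $\abs{W(\vec z)}^{2^{d-1}}$ uniformly in $\vec z$ and $\vec\alpha$.

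To this end, I would apply $(d-1)$ iterations of Weyl differencing to $W(\vec z)$ in the variable $\vec y$ with parameter $\theta$, essentially as in Birch's Lemma~2.4~\cite{birchFormsManyVars}. A simple degree count shows that the iterated differencing $\Delta_{\vec h_{d-1}}\cdots\Delta_{\vec h_1}$ applied to $\vec\alpha\cdot(\vec f(\vec y+\vec z)-\vec f(\vec y))$ amounts to a $d$-fold difference of $\vec f$ and so yields a polynomial that is constant in $\vec y$; hence this $\vec\alpha$- and $\vec z$-dependent piece only contributes a phase to the inner geometric $\vec y$-sum. The same differencings applied to $-\vec\beta\cdot\vec f(\vec y)$ produce a polynomial of degree $1$ in $\vec y$ whose linear coefficient vector is a nonzero scalar multiple of $\gradSomethingMultilinear{\vec\beta\cdot\vec f}{\vec h_1,\ldots,\vec h_{d-1}}$. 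Estimating the resulting $\vec y$-sum by a product of one-dimensional geometric sums $\min(P,\|\cdot\|^{-1})$ and running the usual Davenport-Birch counting argument then yields, for any $\epsilon'>0$,
\[
\abs{W(\vec z)}^{2^{d-1}}
\ll_{d,n,\epsilon'}
P^{n(2^{d-1}-(d-1)\theta)+\epsilon'}\,\weylDiffIneqNumSolns\brax{P^\theta,P^{(d-1)\theta-d}}
\]
uniformly in $\vec z$ and $\vec\alpha$. Combining with the earlier $O(P^{n\cdot 2^{d-1}})$ bound from the $\vec z$-sum and dividing by $P^{(n+\epsilon)2^d}$ produces \eqref{1.eqn:from_min_to_prod}, provided $\epsilon$ is chosen large enough to absorb $\epsilon'$.

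The main obstacle is in the middle step: verifying that the iterated differencing really does isolate the $\vec\beta$-only linear coefficient from the mixed-origin polynomial defining $W(\vec z)$. Birch's lemma is typically phrased for sums whose exponents have a pure leading $d$-form, while here one must track how the lower-degree, $\vec\alpha$- and $\vec z$-dependent terms are affected by each successive difference operator, and confirm that after $d-1$ iterations they collapse to constants in $\vec y$. The reward for this careful accounting is the complete disappearance of $\vec\alpha$ from the right-hand side, which is precisely what gives the repulsion inequality its characteristic ``$\vec\beta$-only'' shape and lets it be fed into the minor-arcs machinery of \S\ref{1.sec:moat_lemmas}.
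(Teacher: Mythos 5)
Your proposal is correct and follows essentially the same route as the paper: the min-to-product step, the Bentkus--G\"otze shift $\vec{x}=\vec{y}+\vec{z}$ that leaves $\betaDotLeadingPart$ as the leading form of the inner sum (your $\sup_{\vec z}$ bound in place of the paper's Cauchy inequality is immaterial), and then Weyl differencing \`a la Birch. The ``main obstacle'' you identify is handled in the paper by simply citing Birch's Lemma~2.4 as a black box: that lemma applies to any polynomial with prescribed leading degree-$d$ part, and the count $\weylDiffIneqOfSomethingNumSolns{f}$ depends only on $f^{[d]}$, so the lower-degree $\vec{\alpha}$- and $\vec{z}$-dependent terms require no separate accounting.
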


\begin{proof}
	Observe that \eqref{1.eqn:from_min_to_prod} will follow if we can prove that
	\begin{equation*}
	\abs*{\frac{\expSumSBox \expSumSBoxAt{\vec{\alpha}+\vec{\beta}}}{P^{2(n+\epsilon)}}}^{2^{d-1}}
	\ll_{d,n,\epsilon}
	\frac{
		\weylDiffIneqNumSolns \brax{ P^\theta, P^{(d-1)\theta-d} }
	}{P^{(d-1)\theta n}}.
	\end{equation*}
	
	First we use an idea from the proof of Theorem~5.1 in Bentkus and G\"otze~\cite{bentkusGotzeEllipsoids}, also found in Lemma~2.2 of M\"uller~\cite{mullerSystemsQuadIneqs}, to eliminate $\vec{\alpha}$.
	We have
	\begin{align*}
	\MoveEqLeft[2]
	\expSumSBoxAt{\vec{\alpha}+\vec{\beta}} \widebar{\expSumSBox}
	\nonumber
	\\
	&=
	\sum_{\substack{\vec{x} \in \bbZ^n \\ \vec{x}/P\in\weightbox }}
	\sum_{\substack{  \vec{z} \in \bbZ^n \\ \brax{\vec{x}+\vec{z}}/P\in\weightbox  }}
	e\brax[\big]{\brax{\vec{\alpha}+\vec{\beta}}\cdot\vec{f}(\vec{x}) -\alphaDotLittleF(\vec{x}+\vec{z})}
	\nonumber
	\\
	&\leq
	\sum_{\substack{  \vec{z} \in \bbZ^n \\ \supnorm{\vec{z}}\leq P  }}
	\abs[\bigg]{
		\sum_{\substack{\vec{x} \in \bbZ^n \\ \vec{x}/P\in\weightbox_{\vec{z}} }}
		e\brax[\big]{\brax{\vec{\alpha}+\vec{\beta}}\cdot\vec{f}(\vec{x}) -\alphaDotLittleF(\vec{x}+\vec{z})}
	}
	\nonumber
	\\
	&=
	\sum_{\substack{  \vec{z} \in \bbZ^n \\ \supnorm{\vec{z}}\leq P  }}
	\abs[\bigg]{
		\sum_{\substack{\vec{x} \in \bbZ^n \\ \vec{x}/P\in\weightbox_{\vec{z}} }}
		e\brax[\big]{\betaDotLeadingPart(\vec{x})  + g_{\vec{\alpha},\vec{\beta},\vec{z}}(\vec{x})
		}
	}
	\end{align*}
	for some real polynomials $g_{\vec{\alpha},\vec{\beta},\vec{z}}(\vec{x})$ of degree at most $d-1$ in $\vec{x}$, and some boxes $\weightbox_{\vec{z}}\subset\weightbox$. Now by the special case	of Cauchy's inequality 
	$
	\abs{\sum_{i\in\calI} \lambda_i}^2
	\leq
	\brax{\#\calI} \cdot \sum_{i\in\calI} \abs{\lambda_i}^2
	$, we have
	\begin{align}
	\MoveEqLeft[1]
	\abs{\expSumSBoxAt{\vec{\alpha}+\vec{\beta}} \widebar{\expSumSBox}}^{2^{d-1}}
	\nonumber
	\\
	&\leq
	\brax[\bigg]{\sum_{\substack{  \vec{z} \in \bbZ^n \\ \supnorm{\vec{z}}\leq P  }}
		\abs[\bigg]{
			\sum_{\substack{\vec{x} \in \bbZ^n \\ \vec{x}/P\in\weightbox_{\vec{z}} }}
			e\brax[\big]{\betaDotLeadingPart(\vec{x})  + g_{\vec{\alpha},\vec{\beta},\vec{z}}(\vec{x}) }
		}
	}^{2^{d-1}}
	\nonumber
	\\
	&\llcv{}{d,n}
	P^{(2^{d-1}-1)n}
	\sum_{\substack{  \vec{z} \in \bbZ^n \\ \supnorm{\vec{z}}\leq P  }}
	\abs[\bigg]{
		\sum_{\substack{\vec{x} \in \bbZ^n \\ \vec{x}/P\in\weightbox_{\vec{z}} }}
		e\brax[\big]{\betaDotLeadingPart(\vec{x})  + g_{\vec{\alpha},\vec{\beta},\vec{z}}(\vec{x}) }
	}^{2^{d-1}}.
	\label{1.eqn:WD2}
	\end{align}
	Bentkus and G\"otze used the double large sieve of Bombieri and Iwaniec \cite{bombieriDoubleSievePaper} to bound the inner sum in \eqref{1.eqn:WD2} in the case when $d=2$. We extend the argument to higher $d$ by employing Lemma~2.4 of Birch~\cite{birchFormsManyVars}, which states that\footnote{Birch writes $N \brax{ P^\theta ; P^{(d-1)\theta-d} ;  {\alpha} }$ for our $\weylDiffIneqOfSomethingNumSolns{\alphaDotLittleF} \brax{ P^\theta, P^{(d-1)\theta-d}  }$ and $S({\alpha})$ for our $\expSumSBox$.}
	\[
	\expSumSBox
	\ll_{d,n,\epsilon}
	P^{2^{d-1}n-(d-1)n\theta +\epsilon}  \weylDiffIneqOfSomethingNumSolns{\alphaDotLittleF} \brax{ P^\theta, P^{(d-1)\theta-d}  }.
	\]
	The innermost sum in \eqref{1.eqn:WD2} has the same form as $\expSumSBox$, with $\weightbox_{\vec{z}}$ in place of $\weightbox$ and $\betaDotLeadingPart(\vec{x})  + g_{\vec{\alpha},\vec{\beta},\vec{z}}(\vec{x}) $ in place of $\alphaDotLittleF$ as the underlying polynomial. The degree of $ g_{\vec{\alpha},\vec{\beta},\vec{z}}$ is at most $d-1$, so $\betaDotLeadingPart(\vec{x})$ is the leading part of this polynomial. So applying Birch's result to the innermost sum in \eqref{1.eqn:WD2} shows
	\begin{align*}
	\MoveEqLeft[6]
	\abs[\bigg]{
		\sum_{\substack{\vec{x} \in \bbZ^n \\ \vec{x}/P\in\weightbox_{\vec{z}} }}
		e\brax[\big]{\betaDotLeadingPart(\vec{x})  + g_{\vec{\alpha},\vec{\beta},\vec{z}}(\vec{x}) }
	}^{2^{d-1}}
	\\
	&	\llcv{\epsilon}{d,n,\epsilon}
	P^{2^{d-1}n-(d-1)\theta n+\epsilon}
	\weylDiffIneqOfSomethingNumSolns{\betaDotLeadingPart(\vec{x})  + g_{\vec{\alpha},\vec{\beta},\vec{z}}(\vec{x})} \brax{ P^\theta, P^{(d-1)\theta-d} }
	\\
	&=
	P^{2^{d-1}n-(d-1)\theta n+\epsilon}
	\weylDiffIneqNumSolns \brax{ P^\theta, P^{(d-1)\theta-d} },
	\end{align*}
	as $\weylDiffIneqOfSomethingNumSolns{f}$ depends only on the degree $d$ part of $f$. With \eqref{1.eqn:WD2} this proves the result.
\withqed\end{proof}

\subsection{Proof of Proposition~\ref*{1.prop:moat_lemma}}\label{1.sec:proof_of_moat_lemma}

\begin{proof}[Proof of Proposition~\ref*{1.prop:moat_lemma}]
	Let us first suppose that for some $\theta >0$ we have
	\begin{equation}\label{1.eqn:M<N}
	\auxIneqNumSolns \brax{ P^\theta }
	<
	\weylDiffIneqNumSolns \brax{ P^\theta , P^{(d-1)\theta-d}  }.
	\end{equation}
	Then there must be a $(d-1)$-tuple of vectors $\vec{x}^{(1)},\dotsc,\vec{x}^{(d-1)} \in \bbZ^n$ which is included in the count $\weylDiffIneqNumSolns \brax{ P^\theta  , P^{(d-1)\theta-d} }$ but not in $\auxIneqNumSolns \brax{P^\theta }$.
	
	Since the $(d-1)$-tuple $(\vec{x}^{(1)},\dotsc,\vec{x}^{(d-1)})$ is counted by $\weylDiffIneqNumSolns \brax{ P^\theta  , P^{(d-1)\theta-d} }$, the inequality $\supnorm{\vec{x}^{(i)}}\leq P^\theta $ holds for each $i=1,\dotsc,d-1$, and we have the bound
	\begin{align}
	\label{1.eqn:||f(xhat)||<stuff}
	\supnormbig{
		\vec{v}-\gradFMultilinear{ \vec{\beta} }{  \vec{x}^{(1)}, \dotsc, \vec{x}^{(d-1)}}
	}
	&<
	P^{(d-1)\theta-d},
	\intertext{for some $\vec{v}\in \bbZ^n$. Since this $(d-1)$-tuple  $(\vec{x}^{(1)},\dotsc,\vec{x}^{(d-1)})$ is not counted by $\auxIneqNumSolns \brax{P^\theta }$, we must also have}
	\label{1.eqn:|f(xhat)|>alpha}
	\supnorm{
		\gradFMultilinear{ \vec{\beta} }{  \vec{x}^{(1)}, \dotsc, \vec{x}^{(d-1)}}
	}
	&\geq
	\supnorm{\betaDotLeadingPart} P^{(d-2)\theta}.
	\end{align}
	We use \eqref{1.eqn:||f(xhat)||<stuff} and \eqref{1.eqn:|f(xhat)|>alpha} to relate $P^\theta$ and $\supnorm{\vec{\beta}}$. It follows from \eqref{1.eqn:||f(xhat)||<stuff} that either
	\begin{gather}
	\label{1.eqn:|f(xhat)|<theta^d-1_/_P}
	\supnormbig{\gradFMultilinear{ \vec{\beta} }{  \vec{x}^{(1)}, \dotsc, \vec{x}^{(d-1)}}} < P^{(d-1)\theta-d}
	\intertext{or}
	\label{1.eqn:|f(xhat)|>1/2}
	\supnormbig{\gradFMultilinear{ \vec{\beta} }{  \vec{x}^{(1)}, \dotsc, \vec{x}^{(d-1)}}} \geq \frac{1}{2}.
	\end{gather}
	When \eqref{1.eqn:|f(xhat)|<theta^d-1_/_P} holds, then \eqref{1.eqn:|f(xhat)|>alpha} implies
	\begin{equation}\label{1.eqn:alpha-alpha'_<_P^-d_(theta_P)^d-1}
	\supnorm{\betaDotLeadingPart}
	<
	\frac{P^{(d-1)\theta-d}}{P^{(d-2)\theta}}
	=
	P^{\theta-d}.
	\end{equation}
	When on the other hand \eqref{1.eqn:|f(xhat)|>1/2} holds, then the bound $\supnorm{\vec{x}^{(i)}}\leq P^\theta $ implies
	\begin{equation*}
	\supnorm{
		\gradFMultilinear{ \vec{\beta} }{  \vec{x}^{(1)}, \dotsc, \vec{x}^{(d-1)}}
	}
	\llcv{}{d,n}
	\supnorm{\betaDotLeadingPart}
	P^{(d-1)\theta},
	\end{equation*}
	and it follows by \eqref{1.eqn:|f(xhat)|>1/2} that
	\begin{equation}\label{1.eqn:alpha-alpha'_>_(theta_P)^1-d}
	\supnorm{\betaDotLeadingPart}
	\ggcv{}{d,n}
	P^{-(d-1)\theta}.
	\end{equation}
	Either \eqref{1.eqn:alpha-alpha'_<_P^-d_(theta_P)^d-1} or \eqref{1.eqn:alpha-alpha'_>_(theta_P)^1-d} holds. So by rearranging and applying \eqref{1.eqn:norm_of_|b.f|} we infer
	\begin{equation}\label{1.eqn:bound_on_theta_P}
	P^{-\theta}
	\ll_{\mu, M}
	\max\setbrax{P^{-d}\supnorm{\vec{\beta}}^{-1},\,\supnorm{\vec{\beta}}^{\frac{1}{d-1}}}.
	\end{equation}
	
	We have shown that \eqref{1.eqn:M<N} implies \eqref{1.eqn:bound_on_theta_P}. Now Lemma~\ref*{1.lem:weyl_diff_ineq}  shows that for $\theta\in\openclsd{0}{1}$  we have
	\begin{equation*}
	\weylDiffIneqNumSolns\brax{ P^\theta, P^{(d-1)\theta-d} }
	\ggcv{\epsilon}{d,n,\epsilon}
	P^{(d-1)\theta n}
	\min\setbrax*{ \abs*{\frac{\expSumSBox}{P^{n+\epsilon}}} , \,  \abs*{\frac{\expSumSBoxAt{\vec{\alpha}+\vec{\beta}}}{P^{n+\epsilon}}} }^{2^d},
	\end{equation*}
	and together with our assumption \eqref{1.eqn:aux_ineq_bound_moat_lemma} this implies that \eqref{1.eqn:M<N} will hold provided that $\theta\in\openclsd{0}{1}$ and that
	\begin{equation}\label{1.eqn:M<N_sufficient_condition}
	(P^\theta)^{(d-1)n - 2^d \cancellation }
	\leq
	C_1^{-1}
	P^{(d-1)\theta n}
	\min\setbrax*{ \abs*{\frac{\expSumSBox}{P^{n+\epsilon}}} , \,  \abs*{\frac{\expSumSBoxAt{\vec{\alpha}+\vec{\beta}}}{P^{n+\epsilon}}} }^{2^d}
	\end{equation}
	for some $C_1 \geq 1$ depending only on $C_0,d,n$ and $\epsilon$. Define $\theta$ by
	\begin{equation}\label{1.eqn:choice_of_theta}
	P^\theta
	=
	C_1^{1/2^d \cancellation}
	\min\setbrax*{ \abs*{\frac{\expSumSBox}{P^{n+\epsilon}}} , \,  \abs*{\frac{\expSumSBoxAt{\vec{\alpha}+\vec{\beta}}}{P^{n+\epsilon}}} }^{-1/\cancellation},
	\end{equation}
	so that equality holds in \eqref{1.eqn:M<N_sufficient_condition}. We consider three cases.
	
	The first case is when $\theta\leq 0$ holds. We can rule this out. If $\theta \leq 0$ then  \eqref{1.eqn:choice_of_theta} gives
	\begin{equation}
	\label{1.eqn:theta<0}
	\min\setbrax*{ \abs*{\frac{\expSumSBox}{P^{n+\epsilon}}} , \,  \abs*{\frac{\expSumSBoxAt{\vec{\alpha}+\vec{\beta}}}{P^{n+\epsilon}}} }
	\geq
	C_1^{-1/2^d}.
	\end{equation}
	To prove \eqref{1.eqn:assumed_arcs}, we can assume without loss of generality that $P \ggcv{\epsilon}{n,\epsilon} 1$ holds. But then \eqref{1.eqn:theta<0} is false, since $\abs{\expSumSBox}\leq (P+1)^n$ by the definition \eqref{1.eqn:def_of_S}.

The second case  is when $0 < \theta\leq 1$ holds. Our choice \eqref{1.eqn:choice_of_theta} for the parameter $\theta$ then ensures that \eqref{1.eqn:M<N_sufficient_condition} holds. We saw above that when $\theta\in\openclsd{0}{1}$, that bound \eqref{1.eqn:M<N_sufficient_condition} implies the inequality \eqref{1.eqn:M<N}. We also saw that \eqref{1.eqn:M<N} leads to the estimate \eqref{1.eqn:bound_on_theta_P}.  This estimate \eqref{1.eqn:bound_on_theta_P} implies the conclusion  \eqref{1.eqn:assumed_arcs} of the lemma upon substituting in the value of~$\theta$ from \eqref{1.eqn:choice_of_theta} and choosing $C$ to satisfy the bound $C \gg_{\mu,M} C_1^{1/2^d}$.

The third and last case is when $\theta> 1$ holds. In this case we have by \eqref{1.eqn:choice_of_theta} that
	\begin{equation}
	\label{1.eqn:theta>1}
	\min\setbrax*{ \abs*{\frac{\expSumSBox}{P^{n+\epsilon}}} , \,  \abs*{\frac{\expSumSBoxAt{\vec{\alpha}+\vec{\beta}}}{P^{n+\epsilon}}} }
	<
	C_1^{1/2^d}
	P^{-\cancellation}.
	\end{equation}
	Now for any $\tee >0$ we have $\max\setbrax{P^{-d} \tee^{-1},\,\tee^{\frac{1}{d-1}}} \geq P^{-1}$, and hence
	\begin{equation*}
	\max\setbrax{P^{-d}\supnorm{\vec{\beta}}^{-1},\,\supnorm{\vec{\beta}}^{\frac{1}{d-1}}}^\cancellation
	\geq P^{-\cancellation}.
	\end{equation*}
	So  \eqref{1.eqn:assumed_arcs} follows from \eqref{1.eqn:theta>1} on choosing $C$ such that $C\geq C_1^{1/2^d}$ holds.\withqed
\end{proof}

\section{The proof of Theorems~\ref*{1.thm:main_thm_short} and~\ref*{1.thm:manin}}\label{1.sec:main_thm_proof}

\begin{proof}[Proof of Theorem~\ref*{1.thm:manin}]
	Let ${\numZeroesInBoxOf{F_1,\dotsc,F_R}}(P)$ be as in \eqref{1.eqn:def_of_num_solns_in_box}. Set $f_i = F_i$, and apply Propositions~\ref*{1.prop:circle_method} and~\ref*{1.prop:moat_lemma}.  This shows that
	\begin{equation}\label{1.eqn:HL_formula_application}
	\numZeroesInBoxOf{f_1,\dotsc,f_R}(P)
	=
	\singSeries\singIntegralBox P^{n-dR}
	+O_{C,f_1,\dotsc,f_R}(P^{n-dR-\delta}),
	\end{equation}
	where  $\delta= \delta(\cancellation,d,R)$ is positive. It remains to prove that $\singIntegralBox$ and $\singSeries$   are positive under the conditions given in the theorem. Note that since $V(F_1,\dotsc,F_R)$ has dimension $n-1-R$, a smooth point corresponds to a solution of the equations
	\begin{equation}
	F_1(\vec{x}) = 0,\dotsc,F_R(\vec{x})=0
	\label{1.eqn:F=0}
	\end{equation}
	at which the $R\times n$ Jacobian matrix $ \brax{\partial F_i(\vec{x})/\partial x_j}_{ij}$ has full rank.
	
	Let $\vec{x}=\vec{r}$ be a real solution to \eqref{1.eqn:F=0} at which the matrix $ \brax{\partial F_i(\vec{x})/\partial x_j}_{ij}$ has full rank, and for which $\vec{r}\in\weightbox$. Applying the Implicit Function Theorem to the equations \eqref{1.eqn:F=0} at the point $\vec{r}$, we find an open set $U\subset \weightbox$ on which the solutions to \eqref{1.eqn:F=0} form an $(n-R)$ dimensional real manifold. Considering a small neighbourhood of this manifold shows that for all $\epsilon\in\openclsd{0}{1}$ we have
	\begin{equation*}
	\Meas{  \vec{s} \in U \suchthat \abs{F_1(\vec{s})} \leq \epsilon, \dotsc,  \abs{F_R(\vec{s})}  \leq \epsilon}
	\gg_{F_1,\dotsc,F_R}
	\epsilon^R
	\end{equation*}
	where $\lambda$ is the Lebesgue measure. Letting $\vec{t} = P\vec{s}$ and $\epsilon = \tfrac{1}{2}P^{-d}$, we see that
	\[
	\Meas{ \vec{t}\in\bbR^n\suchthat \vec{t}/P \in U, \abs{F_1(\vec{t})} \leq \tfrac{1}{2}, \dotsc,  \abs{F_R(\vec{t})}  \leq \tfrac{1}{2}}
	\gg_{F_1,\dotsc,F_R}
	P^{n-dR},
	\]
	and \eqref{1.eqn:evaluating_frakI} from Lemma~\ref*{1.lem:frakI} then shows that $\singIntegralBox$ is positive.
	
	To show that $\singSeries$ is positive under the conditions given in the theorem we use a variant of Hensel's Lemma. Let $p$ be a prime and let $\vec{a}\in\bbZ_p^n$. Suppose that $\vec{x}=\vec{a}$ is a solution to the system $f_i(\vec{x})=\vec{0}$ for which the Jacobian matrix $\brax{\partial f_i(\vec{x})/\partial x_j}_{ij}$ is nonsingular. Possibly after permuting the variables $x_i$ if necessary, we can assume that the submatrix $M(\vec{x})$ consisting of the last $R$ columns of $\brax{\partial f_i(\vec{x})/\partial x_j}_{ij}$ is nonsingular at $\vec{x}=\vec{a}$. 
	
	The so-called valuation theoretic Implicit Function Theorem then applies to the polynomials $f_i$ with the common zero $\vec{a}$ over the valued field $\bbQ_p$. This is essentially a version of Hensel's Lemma; see Kuhlmann~\cite[Theorem~25]{kuhlmannHensel}. If we write $\abs{\det M(\vec{a})}_p = p^{-\alpha}$, the theorem states that for all $p$-adic numbers $a'_1,\dotsc,a'_{n-R}\in \bbQ_p$ with $\abs{a'_i-a_i}_p < p^{-2\alpha}$, there are unique $p$-adic numbers $a'_{n-R+1},\dotsc,a'_{n}\in\bbQ_p$ with $\abs{a'_i-a_i}_p < p^{-\alpha}$ such that each $f_i(\vec{a}')=0$. 
	
	Now let $a'_1,\dotsc,a'_{n-R}$ be $p$-adic integers satisfying $a'_i \equiv a_i$ modulo $p^{2\alpha+1}$. For each $k\in\bbN$ there are $p^{(k-2\alpha-1)(n-R)}$ choices for $a'_i$ which are distinct modulo $p^k$, and by the theorem above each one extends to a vector of $p$-adic integers $\vec{a}$ satisfying $\vec{f}(\vec{a}')=0$.
	
	If this holds for each prime $p$, then $\singSeries$ is positive. For then reducing the vectors $\vec{a}'$  modulo $p^k$ gives $\gg_{\vec{f},p} p^{k(n-R)}$  distinct vectors $\vec{b}\in\set{1,\dotsc,p^k}^n$ satisfying the system of congruences $f_i(\vec{b})\equiv \vec{0}$ modulo $p^k$. The equality \eqref{1.eqn:evaluating_frakS} then shows that $\singSeries>0$.
\withqed\end{proof}

\begin{proof}[Proof of Theorem~\ref*{1.thm:main_thm_short}]
	We let $\cancellation = \frac{n-R+1}{4}$, and apply Theorem~\ref*{1.thm:manin} to the system of forms $F_i$. The result will follow if we can show that \eqref{1.eqn:aux_ineq_bound_in_manin_thm} holds, which is to say that
	\begin{equation}\label{1.eqn:aux_ineq_bound}
	\auxIneqOfSomethingNumSolns{\betaDotCapitalF}
	(B)
	\ll
	B^{\sigma_\bbR}
	\end{equation}
	for all $\vec{\beta}\in\bbR^R$ and all $B\geq 1$. Here the quantity $\sigma_\bbR$ is defined by \eqref{1.eqn:def_of_sigma-sub-R}.
	
	For each $\vec{\beta}\in\bbR^R$, let the matrix of the quadratic form $\betaDotCapitalF$ be $M\brax{\vec{\beta}}$. That is, $M\brax{\vec{\beta}}$ is the unique real $n\times n$ symmetric matrix with
	\[
	\betaDotCapitalF(\vec{x})
	=
	\vec{x}^T M\brax{\vec{\beta}}\vec{x}.
	\]
	Then we have
	\[
	\gradSomethingMultilinear{\betaDotCapitalF}{\vec{u}}
	=
	2M\brax{\vec{\beta}}\vec{u},
	\]
	so $\auxIneqOfSomethingNumSolns{\vec{\beta}}\brax{B}$ counts vectors $\vec{u}\in \bbZ^n$ satisfying
	\begin{equation*}
	\supnorm{\vec{u}}
	\leq B,
	\qquad
	\supnorm{M\brax{\vec{\beta}}}
	\leq
	\tfrac{1}{2}\supnorm{\betaDotCapitalF}.
	\end{equation*}
	These vectors $\vec{u}$ are all contained in the box $\supnorm{\vec{u}}\leq B$, and in the ellipsoid
	\begin{equation*}
	E(\vec{\beta})
	=	
	\set[]{\vec{\tee}\in\bbR^n \suchthat \vec{\tee}^T { M\brax{\vec{\beta}}^T M\brax{\vec{\beta}} } \vec{\tee}
		<
		n\cdot\supnorm{\betaDotCapitalF}^2}.
	\end{equation*}
	The ellipsoid has principal radii $\abs{\lambda}^{-1}\sqrt{n}\supnorm{\betaDotCapitalF}$ where $\lambda$ runs over the eigenvalues of the real symmetric matrix $M\brax{\vec{\beta}}$, counted with multiplicity. 
	Hence
	\begin{equation*}
	\auxIneqOfSomethingNumSolns{\betaDotCapitalF}\brax{B}
	\ll_n
	\prod_{\lambda} \min\setbrax{ \abs{\lambda}^{-1}\supnorm{\betaDotCapitalF}+1,\, B}
	\end{equation*}
	where $\lambda$ is as before. So to prove \eqref{1.eqn:aux_ineq_bound} it suffices that $n-\sigma_\bbR$ of the $\lambda$ are of size $\abs{\lambda} \gg \supnorm{\betaDotCapitalF}$ at least.
	
	Suppose for a contradiction that this is false. Then there exists a sequence $\vecsuper{\beta}{i}\in\bbR^R$ such that at least $\sigma_\bbR+1$ of the eigenvalues of $M\brax{\vecsuper{\beta}{i}\cdot\vec{q}}$ satisfy $\lambda = o(\supnorm{\vecsuper{\beta}{i}\cdot\vec{F}})$. By passing to a subsequence, we can assume $\vecsuper{\beta}{i}/\supnorm{\vecsuper{\beta}{i}} \to \vec{\beta}$, and then at least $\sigma_\bbR+1$ of the eigenvalues of $M\brax{\betaDotCapitalF}$ must be zero. In other words,
	\begin{equation*}
	\dim \sing V(\betaDotCapitalF)
	\geq 
	\sigma_\bbR.
	\end{equation*}
	But this contradicts the definition \eqref{1.eqn:def_of_sigma-sub-R}. So \eqref{1.eqn:aux_ineq_bound} holds as claimed.		
\withqed\end{proof}

As alluded to after Lemma~\ref*{1.lem:nonsing_case}, the argument used to prove Theorems~\ref*{1.thm:main_thm_short} and~\ref*{1.thm:manin} also yields weak approximation for $V(F_1,\dotsc,F_R)$ if that variety is smooth. It suffices to show that if the system $F_i(q\vec{x}-\vec{a})=0$ has solutions in the $p$-adic integers for each $p$, then it has integral solutions $\vec{x}$ with $\frac{\vec{x}}{\supnorm{\vec{x}}}$ arbitrarily close to $\frac{\vec{r}}{\supnorm{\vec{r}}}$, for any fixed real solution $\vec{r}$ to the system $F_i(\vec{r})=0$. For this one can let $\weightbox$ be a sufficiently small box containing $\frac{\vec{r}}{\supnorm{\vec{r}}}$, and repeat the proof of Theorems~\ref*{1.thm:main_thm_short} and \ref*{1.thm:manin} with the choice ${f}_i(\vec{x}) = F_i(q\vec{x}-\vec{a})$  instead of $f_i=F_i$ at the start of the proof of  Theorem~\ref*{1.thm:manin}. Since $\auxIneqNumSolns(B)=\auxIneqOfSomethingNumSolns{\betaDotCapitalF}(B)$ we obtain \eqref{1.eqn:HL_formula_application} as before. Recalling that any real or $p$-adic point of $V(F_1,\dotsc,F_R)$ must be smooth, the argument to prove that  $\singIntegralBox,\singSeries$ are positive goes through and we obtain the existence of an integral solution of the required kind.

\begin{proof}[Proof of Lemma~\ref*{1.lem:nonsing_case}]
	We prove the first inequality in \eqref{1.eqn:nonsing_case}. Let $\vec{\beta}\in\bbR^R\setminus\set{\vec{0}}$ such that
	\[
	\sigma_\bbR
	=\dim V(\betaDotCapitalF).
	\]
	Without loss of generality we may suppose that $\beta_R $ is nonzero. Then we have
	\[
	V(F_1,\dotsc,F_R)
	=
	V(F_1,\dotsc,F_{R-1},\betaDotCapitalF).
	\]
	Since $V(F_1,\dotsc,F_{R-1})$ has dimension $n-1-R$, it follows that
	\[
	V(F_1,\dotsc,F_{R-1})
	\cap
	\sing V(\betaDotCapitalF)
	\subset
	\sing V(F_1,\dotsc,F_R)
	\]
	and so 
	$
	V(F_1,\dotsc,F_{R-1})
	\cap
	\sing V(\betaDotCapitalF)
	=\emptyset,
	$
	as $V(F_1,\dotsc,F_{R})$ is smooth. It follows that $\dim \sing V(\betaDotCapitalF)\leq R-1$, which proves the first inequality in \eqref{1.eqn:nonsing_case}.
	
	The second inequality in \eqref{1.eqn:nonsing_case} follows from the work of Browning and Heath-Brown~\cite{browningHeathBrownDiffDegrees}. In those authors' formula (1.3), set
	\begin{gather*}
	D=2,
	\quad
	r_1=0,
	\quad
	r_2=R,
	\quad
	F_{i,2} =F_i.
	\end{gather*}
	Now the $R\times n$ Jacobian matrix $ \brax{\partial F_i(\vec{x})/\partial x_j}_{ij}$ has full rank at every nonzero solution $\vec{x}\in\widebar{\bbQ}^n$ to $F_1(\vec{x})=\dotsb=F_R(\vec{x})=0$, because $V(F_1,\dotsc,F_R)$ is smooth of dimension $n-1-R$. This makes $F_{i,j}$ a `nonsingular system" in the sense of Browning and Heath-Brown, as defined in their formula (1.7). The next step is to replace $F_{i,d}$ with an ``equivalent optimal system". The comments after formula (1.7) of those authors show that in our case this means replacing $F_i$ with $\sum_j A_{ij}f_j$, where $A$ is an invertible linear transformation. In particular this preserves $V(F_1,\dotsc,F_R)$ and $W$. Now their formulae (1.4) and (1.8) show that $
	B_2
	\leq
	R-1
	$, where
	$
	B_2 = 1+ \dim (W).
	$
	This proves \eqref{1.eqn:nonsing_case}.
\withqed\end{proof}

\subsection*{Acknowledgements}
This paper is based on a DPhil thesis submitted to Oxford University. I would like to thank my DPhil supervisor, Roger Heath-Brown. I am grateful to Victor Beresnevich and Bryan Birch for useful conversations on the topics discussed here, and to Ben Green and Shuntaro Yamagishi for their comments on earlier versions of this paper.

\bibliography{systems-of-many-forms}

\end{document}